\numberwithin{equation}{section}
\renewcommand{\S}{\mathbb{S}}
\renewcommand{\u}{\mathbf{u}}
\renewcommand{\v}{\mathbf{v}}
\renewcommand{\a}{\alpha}
\renewcommand{\b}{\beta}
\newcommand{\T}{\top}
\newcommand{\e}{e}
\newcommand{\eps}{\varepsilon}
\newcommand{\1}{\mathbf{1}}
\newcommand{\SDP}{\mathcal{Q}}
\begin{document}
\title{Four-point semidefinite bound for equiangular lines}
\author{Wei-Jiun Kao\thanks{National Center for Theoretical Sciences, Mathematics Division, Taipei, Taiwan. {\tt wjkao@ncts.ntu.edu.tw}} \qquad Wei-Hsuan Yu\thanks{Mathematics Department, National Central University, Taoyuan, Taiwan. {\tt u690604@gmail.com}}}
\date{}

\maketitle
\begin{abstract}
A set of lines in $\R^d$ passing through the origin is called equiangular if any two lines in the set form the same angle. We proved an alternative version of the three-point semidefinite constraints developed by~\citeauthor{bachoc2008new}, and the multi-point semidefinite constraints developed by~\citeauthor{musin2013multivariate} for spherical codes. The alternative semidefinite constraints are simpler when the concerned object is a spherical $s$-distance set. Using the alternative four-point semidefinite constraints, we found the four-point semidefinite bound for equiangular lines. This result improves the upper bounds for infinitely many dimensions $d$ with prescribed angles. As a corollary of the bound, we proved the uniqueness of the maximum construction of equiangular lines in $\R^d$ for $7 \leq d \leq 14$ with inner product $\a = 1/3$, and for $23 \leq d \leq 64$ with $\a = 1/5$.
\end{abstract}

\section{Introduction}
A set of lines in $\R^d$ passing through the origin is called {\bf equiangular} if any two lines in the set form the same angle. We denote the maximum cardinality of sets of equiangular lines in $\R^d$ by $N(d)$. Searching the values of $N(d)$ is one of the classical problems in discrete geometry. The history can be traced back to~\citeyear{haantjes1948equilateral} by the work of \citet{haantjes1948equilateral}, who settled the problem for $d=3$ and $d=4$. To the best of our knowledge, the results for the bounds for $N(d)$ are summarized in Table~\ref{tb:smallnd}; see Appendix~\ref{sec: review} for the details. One can also refer to Sequence A002853 in The On-Line Encyclopedia of Integer Sequences~\cite{oeis} for the latest results.

\begin{table}[h]
	\centering
    \label{tb:smallnd}
    \begin{tabular}{c|cccccccc}
   		$d$    & 2 & 3--4 & 5  & 6  & 7--14     & 15 & 16     & 17 \\ \hline
        $N(d)$ & 3 & 6    & 10 & 16 & 28 & 36 & 40 & 48 \\
        \hline\hline
        $d$    & 18     & 19     & 20     & 21  & 22  & 23--41 & 42       & 43 \\ \hline
        $N(d)$ & 57--60 & 72--74 & 90--94 & 126 & 176 & 276    & 276--288 & 344
    \end{tabular}
    \caption{The maximum cardinality of equiangular lines for small dimensions $d$}
\end{table}

For a set of equiangular lines in $\R^d$, let $X \subseteq \S^{d-1}$ be a set of unit vectors constructed by choosing a unit vector along every line. Then the set $X$ satisfies
\[
    c \cdot c' = \pm \a \text{ for all } c, c' \in X \text{ and } c \neq c'
\]
for some real number $\a \in [0, 1)$. The set $X$ is called a {\bf spherical projection} of the original set of equiangular lines. A spherical projection is a spherical $2$-distance set, i.e., a set of unit vectors with the {\bf inner product set}
\[
    A(X) \coloneqq \{c \cdot c': c, c' \in X, c \neq c'\}
\]
containing two different elements. A set $X \subseteq \S^{d-1}$ of unit vectors is a {\bf spherical $s$-distance set} if $|A(X)| = s$.

Let $N_{\a}(d)$ be the maximum cardinality of sets of equiangular lines in $\R^d$ with prescribed inner products $\pm\a$. Neumann~\cite{lemmens1973equiangular} proved that, for a set of equiangular lines in $\R^d$ with cardinality greater than $2d$, the reciprocal of the inner product $\a$ must be an odd integer. Therefore, the values of $N_{1/a}(d)$ for odd integers $a$ are essential for determining $N(d)$. Below are some partial results on $N_{1/a}(d)$; see Appendix~\ref{sec: review2} for the details.

\citet{lemmens1973equiangular} completely determined $N_{1/3}(d)$ for all dimensions $d$ as in Table~\ref{tb: N13d}. Note that the maximum cardinalities remain the same from $d = 7$ to $d = 15$, and become a linear function in $d$ for $d \geq 15$. There is a similar phenomenon when $\a = 1/5$: \citet{cao2022lemmens} proved that $N_{1/5}(d) = 276$ for $23 \leq d \leq 185$, and $N_{1/5}(d) = \lfloor\frac{3}{2}(d-1)\rfloor$ for $d \geq 185$.

\begin{table}[h]
    \centering
    \begin{tabular}{c|cccccc}
        $d$ & 3 & 4 & 5 & 6 & 7-15 & 15- \\ \hline
        $N_{1/3}(d)$ & 4 & 6 & 10 & 16 & 28 & $2d-2$
    \end{tabular}
    \caption{The maximum cardinality of equiangular lines for $\a = 1/3$}
    \label{tb: N13d}
\end{table}

In general, the asymptotic behaviors of $N_{\a}(d)$ are also determined.
\begin{thm}[\citet{jiang2021equiangular}]
    Let $a \geq 3$ be an odd integer. Then $N_{1/a}(d) = \lfloor\frac{a+1}{a-1}(d-1)\rfloor$ for all sufficiently large dimensions $d$.
\end{thm}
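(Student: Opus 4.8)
The plan is to translate the problem into spectral graph theory and reduce it to a statement about the multiplicity of a fixed adjacency eigenvalue. Given $N$ equiangular lines in $\R^d$ with inner products $\pm\a$, $\a = 1/a$, choose a unit vector on each line and let $G$ be their Gram matrix. Let $A$ be the adjacency matrix of the graph $\Gamma$ whose edges record the pairs with inner product $-\a$. Then $G = (1-\a)I + \a J - 2\a A$, where $J$ is the all-ones matrix. Setting $\lambda \coloneqq \frac{1-\a}{2\a} = \frac{a-1}{2} = k-1$ (write $a = 2k-1$), the constraints $G \succeq 0$ and $\operatorname{rank} G \le d$ become $A \preceq \lambda I + \frac12 J$ together with the statement that $\lambda$ is an eigenvalue of $A - \frac12 J$ of multiplicity at least $N - d$; a rank-one (Weyl) interlacing with $\frac12 J$ then shows that $\lambda$ is an eigenvalue of $A$ of multiplicity at least $N - d - 1$ and that $\lambda_2(A) \le \lambda$. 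Thus, for $N \gg d$, $\Gamma$ is a graph whose second adjacency eigenvalue is at most $\lambda$ and in which $\lambda$ occurs with near-maximal multiplicity.

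For the lower bound I would exhibit the extremal configuration directly. Taking $\Gamma$ to be $p$ disjoint copies of $K_k$ gives adjacency spectrum $\lambda = k-1$ with multiplicity $p$ and $-1$ with multiplicity $p(k-1)$; a direct computation shows the corresponding Gram matrix is positive semidefinite of rank exactly $p(k-1)+1$. Realizing these lines in dimension $d = p(k-1)+1$ therefore yields $N = pk = \frac{k}{k-1}(d-1)$ lines, and padding for general $d$ gives $\lfloor \frac{k}{k-1}(d-1)\rfloor = \lfloor \frac{a+1}{a-1}(d-1)\rfloor$, since $\frac{k}{k-1} = \frac{a+1}{a-1}$. (For $a = 3$, i.e.\ $k = 2$, this is a perfect matching and recovers $N_{1/3} = 2d-2$.)

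For the matching upper bound I would decompose $\Gamma$ into connected components $H_1,\dots,H_t$. Because $\lambda_2(A) \le \lambda$, at most one component has largest eigenvalue exceeding $\lambda$; call it $H_0$. A component with $\lambda_1(H_i) < \lambda$ contributes nothing to the $\lambda$-eigenspace, whereas one with $\lambda_1(H_i) = \lambda$ contributes exactly $1$ by Perron--Frobenius and, crucially, has at least $k$ vertices: here $k = k(\lambda)$ is the \emph{spectral radius order}, the least number of vertices of a graph of spectral radius $\lambda$, attained by $K_k$ when $\lambda = k-1$. Writing $m$ for the number of components of spectral radius exactly $\lambda$, we get $N \ge mk$, while the multiplicity bound gives $m + \operatorname{mult}_\lambda(H_0) \ge N - d - 1$. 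Combining these gives $N \ge mk \ge k\bigl(N - d - 1 - \operatorname{mult}_\lambda(H_0)\bigr)$, hence $N \le \frac{k}{k-1}(d-1)\bigl(1 + o(1)\bigr)$ as soon as the exceptional term $\operatorname{mult}_\lambda(H_0)$ is negligible.

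The main obstacle is precisely to bound $\operatorname{mult}_\lambda(H_0)$. The exceptional component may be large, and since a small second eigenvalue does not bound the maximum degree (a star has $\lambda_2 = 0$ yet unbounded degree and large $0$-eigenvalue multiplicity), one cannot naively control the hidden multiplicity. The technical heart is therefore a purely spectral-graph-theoretic theorem: a connected graph of bounded maximum degree on $n$ vertices has second-eigenvalue multiplicity $o(n)$ (e.g.\ $O_\Delta(n/\log\log n)$), proved by analyzing the local structure and the sign pattern of eigenvectors; this is paired with a reduction showing that, in the present configuration, deleting a bounded number of high-degree vertices reaches the bounded-degree regime without affecting the asymptotics. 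Granting $\operatorname{mult}_\lambda(H_0) = o(N)$, the count above yields $N \le \frac{k}{k-1}(d-1) + o(d)$; since the right-hand target is an integer met exactly by the construction, a final argument sharpening the $o(d)$ error for all sufficiently large $d$ forces equality. This error term is exactly why the equality is asserted only for $d$ large.
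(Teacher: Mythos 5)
First, a point of comparison: the paper does not prove this statement at all --- it is quoted as a known result of \citet{jiang2021equiangular}, so there is no ``paper's own proof'' to measure you against. What you have written is, in architecture, a faithful skeleton of the proof in that cited reference: the translation $G = (1-\a)I + \a J - 2\a A$, the rank-one interlacing giving $\lambda_2(A) \le \lambda = (a-1)/2$ and multiplicity of $\lambda$ at least $N-d-1$, the disjoint-$K_k$ construction for the lower bound (your rank computation there is correct, and the padding by isolated vertices in the negative graph does give exactly the floor), and the component-by-component count using the spectral radius order $k(\lambda)$. All of that checks out.

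The gaps are real, however, and they are exactly where the difficulty of the theorem lives. (1) The sublinear second-eigenvalue-multiplicity theorem for connected bounded-degree graphs is not a lemma you can ``grant'': it is the main technical innovation of the cited work, and nothing in your sketch indicates how it would be proved (the actual argument deletes a small vertex set to shatter the graph into pieces of polylogarithmic size and controls how much each piece can contribute to the eigenspace). (2) The reduction to bounded degree is likewise asserted; the negative graph of an arbitrary choice of unit vectors can have unbounded degree, and one must show that a suitable switching (found via Ramsey's theorem applied to a large positive clique, plus a semidefiniteness argument bounding negative degrees into that clique) makes the maximum degree $O_\a(1)$. Your alternative phrasing --- ``deleting a bounded number of high-degree vertices'' --- is not obviously achievable and would need its own proof. (3) Most importantly, your final step does not close: from $N \ge k\bigl(N-d-1-\operatorname{mult}_\lambda(H_0)\bigr)$ with $\operatorname{mult}_\lambda(H_0) = o(N)$ you obtain only $N \le \frac{k}{k-1}(d+1) + o(d)$, and an $o(d)$ (or even $O(1)$) overshoot is \emph{not} ``forced to equality'' by the existence of a matching construction --- the achievable values are spaced by $O(1)$, so nothing pins $N$ to the floor. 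The exact statement requires a separate stability argument showing that near the extremal value the components with spectral radius $\lambda$ are forced to be copies of $K_k$ and the exceptional component cannot contribute, after which a careful count (tracking the $\pm 1$ from the $\frac12 J$ perturbation and the $-1$ per component) yields $N \le \lfloor\frac{k}{k-1}(d-1)\rfloor$ on the nose. As written, your argument proves only the asymptotic version $N_{1/a}(d) = \frac{a+1}{a-1}d + O(1)$, conditional on two major unproved inputs.
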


As for the constant upper bound counterpart, there is a theorem proved by applying the three-point semidefinite programming method to spherical projections of sets of equiangular lines.
\begin{thm}[\citet{yu2017new}]\label{thm: 3sdp_main_mirror}
Let $a \geq 3$ be an odd integer. Suppose $d \leq  D_3(a) = 3a^2-16$. Then
\[
    N_{1/a}(d) \leq \frac{1}{2}(a^2-1)(a^2-2).
\]
\end{thm}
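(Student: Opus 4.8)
The plan is to bound the size of a spherical projection directly with the three-point semidefinite method of \citet{bachoc2008new}. First I would fix a maximum set of equiangular lines in $\R^d$ with common angle $\a = 1/a$ and pass to a spherical projection $X \subseteq \S^{d-1}$, so that $X$ is a spherical $2$-distance set with inner product set $A(X) \subseteq \{-1/a,\, 1/a\}$ and $|X| = N_{1/a}(d)$. The goal is then to show $|X| \le \frac{1}{2}(a^2-1)(a^2-2)$. The engine of the argument is the three-point semidefinite bound, which I would set up and then certify by exhibiting an explicit dual-feasible point.

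Recall that the three-point method produces, for every real function $f(u) = \sum_{k \ge 0} f_k\, G_k^{d}(u)$ expanded in the Gegenbauer polynomials $G_k^{d}$ for dimension $d$ and every finite family of positive-semidefinite matrices $F_k \succeq 0$ fed into the Bachoc--Vallentin matrix-valued functions $S_k^{d}(u,v,w)$, an upper bound on $|X|$, provided the feasibility conditions hold: the pointwise inequalities $f(u) \le 0$ for $u \in A(X)$, the three-point inequalities $\sum_k \langle F_k, S_k^{d}(u,v,w)\rangle \le 0$ for every admissible triple with Gram entries $(u,v,w)$, and the coefficient conditions $f_k \ge 0$ for $k \ge 1$ together with the matching positivity supplied by the $F_k$. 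The crucial simplification in our setting is that $X$ is $2$-distance: every off-diagonal Gram entry of a triple $x,y,z \in X$ lies in $\{-1/a,\, 1/a\}$, so only the finitely many sign patterns $(u,v,w) \in \{-1/a,\, 1/a\}^3$ whose $3\times 3$ Gram matrix is positive semidefinite can actually occur. Consequently the infinite list of three-point constraints collapses to a handful of scalar inequalities, one per realizable triple-type, and the dual feasibility problem becomes a small, explicit semidefinite program in the parameters $a$ and $d$.

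The heart of the proof is to exhibit a single feasible dual certificate whose objective value is exactly $\frac{1}{2}(a^2-1)(a^2-2)$. I would search for a low-degree certificate --- a function $f$ of small degree in $u$ together with low-order matrices $F_k$ --- since the $2$-distance structure strongly suggests the extremal solution is supported on few Gegenbauer levels. Its coefficients should emerge as explicit rational functions of $a$ and $d$, and the feasibility conditions (non-negativity of the $f_k$, positive semidefiniteness of the $F_k$, and the finitely many triple-type inequalities) then translate into polynomial inequalities in $a$ and $d$. The dimension restriction $d \le D_3(a) = 3a^2 - 16$ is, I expect, precisely the range in which all of these inequalities hold simultaneously: beyond this threshold one of the positivity conditions is the first to fail, so the certificate --- and hence the constant bound --- is valid exactly up to $d = 3a^2 - 16$.

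The main obstacle is two-fold and lives entirely in this last step. First, one must discover the correct certificate; the natural route is to impose complementary slackness against the conjectured extremal configuration (for $a = 3$ and $a = 5$ the regular two-graphs attaining $28$ and $276$ lines), forcing $f$ to vanish at the occurring inner products and the matrix part to be tight on the occurring triple-types, and then solving the resulting small linear/semidefinite system symbolically. Second, one must verify the semidefiniteness of the Bachoc--Vallentin matrices and the pointwise inequalities \emph{analytically} rather than numerically, carefully tracking the dependence on $d$ so as to pin the threshold at exactly $3a^2 - 16$. This bookkeeping, rather than any single conceptual leap, is where I expect the real difficulty to lie.
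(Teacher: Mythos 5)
Your plan matches the method behind this theorem: the paper imports it from \citet{yu2017new}, whose proof (and this paper's own four-point generalization in Section~\ref{sec: 4sdp}) proceeds exactly as you describe --- use the two-distance structure to collapse the Bachoc--Vallentin three-point constraints to finitely many triple types, then exhibit an explicit dual certificate, pinned down by complementary slackness against the rank-one extremal solution, whose positivity conditions hold precisely for $d \leq 3a^2-16$. What remains unsupplied is the explicit certificate itself (the analogue of the matrix $F$ and scalars $f_1, f_2$ in Lemma~\ref{lemma: 4sdp_main}), which is the computational step you correctly identify as the real work.
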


The three-point semidefinite programming method is initially developed by~\citet{bachoc2008new} to bound the kissing numbers in $\R^d$. The method consists of some constraints on the three-point distance distribution, i.e., the number of triples in a spherical set $X$ with three specified values for pairwise inner products. Theorem~\ref{thm: 3sdp_main_mirror} is called the {\bf three-point semidefinite bound} for equiangular lines.

The aim of this paper is to generalize Theorem~\ref{thm: 3sdp_main_mirror}. We consider the multi-point distance distributions, which must satisfy some constraints developed by~\citet{musin2013multivariate}. We proved an alternative version of the semidefinite constraints. The {\bf alternative semidefinite constraints} are simpler when the concerned object $X$ is a spherical $s$-distance set. When $X$ is a spherical projection of a set of equiangular lines, the constraints can be simplified further by considering the mutual relations of different spherical projections. We call this simplification by {\bf switching reduction}. Using the alternative four-point semidefinite constraints with switching reduction, we proved the following {\bf four-point semidefinite bound} for equiangular lines.
\begin{thm}\label{thm: 4sdp_main_mirror}
Let $a \geq 3$ be an odd integer. Suppose $d \leq D_4(a) = 3a^2 + (12/\sqrt{5})a - 948/25 + o(1)$. Then
\[
    N_{1/a}(d) \leq \frac{1}{2}(a^2-1)(a^2-2).
\]
\end{thm}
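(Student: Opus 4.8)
The plan is to run the semidefinite programming paradigm of Delsarte--Goethals--Seidel, as extended to three points by \citet{bachoc2008new} and to many points by \citet{musin2013multivariate}, now at the four-point level and with the simplifications promised by the alternative constraints and switching reduction. Let $X \subseteq \S^{d-1}$ be a spherical projection of a maximum set of equiangular lines with inner products $\pm\a = \pm 1/a$, so that $A(X) \subseteq \{-1/a,\, 1/a\}$ and $|X| = N_{1/a}(d)$. First I would write down the feasibility SDP whose optimum bounds $|X|$ from above: its variables are the coefficients of a univariate Gegenbauer expansion together with positive-semidefinite matrices indexing the three- and four-point terms, and its constraints encode (i) nonnegativity of the Gegenbauer coefficients and positive semidefiniteness of the matrix variables, and (ii) nonpositivity of the resulting symmetric function on every admissible tuple of pairwise inner products drawn from $\{-1/a,\, 1/a\}$. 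Because $X$ is a $2$-distance set, the alternative constraints replace the continuum of Musin inequalities by finitely many matrix inequalities, one per combinatorial type of two-, three-, and four-point configuration.

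The heart of the argument is to exhibit one explicit dual-feasible certificate and show that it forces $|X| \leq \tfrac12(a^2-1)(a^2-2)$. Following the three-point certificate behind Theorem~\ref{thm: 3sdp_main_mirror}, I would take a low-degree symmetric polynomial in the six pairwise inner products, whose univariate part reproduces the Delsarte bound and whose matrix parts are supported only on the configuration types that survive switching reduction. Switching reduction is the step that averages the four-point constraint over the choices of unit-vector representatives along each line; since flipping a representative negates one column of inner products, this averaging identifies many configuration types and leaves a certificate depending on only a handful of scalar parameters. I would fix those parameters by demanding that the certificate be tight on the extremal configuration underlying the conjectured maximiser of size $\tfrac12(a^2-1)(a^2-2)$, since tightness there is exactly what is needed for the bound to be sharp at the boundary dimension.

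Given the certificate, feasibility reduces to a finite checklist: positive semidefiniteness of a few small symmetric matrices, verified through leading principal minors or Schur complements, together with nonpositivity of the objective symmetric function at each admissible inner-product tuple. Each item is a polynomial inequality in $d$ and $a$, and the certificate remains feasible precisely until $d$ crosses the smallest root of the governing polynomial; extracting $D_4(a)$ is then a matter of solving this threshold and expanding as $a \to \infty$.

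The step I expect to be the main obstacle is exactly this asymptotic optimisation. In contrast with the clean $D_3(a) = 3a^2 - 16$, the four-point threshold $D_4(a) = 3a^2 + (12/\sqrt5)\,a - 948/25 + o(1)$ carries an irrational first-order correction, which signals that the optimal certificate parameters are themselves algebraic in $a$, determined by balancing two competing binding constraints. The delicate part is to choose the one or two essential free parameters left after switching reduction so as to maximise the admissible dimension, and then to confirm that the coefficient $12/\sqrt5$ is genuinely optimal for this family of certificates rather than an artifact of a suboptimal choice. I would handle this by writing the dominant balance between the binding positive-semidefiniteness condition and the binding nonpositivity condition, solving the resulting quadratic for the free parameter, and checking that the $o(1)$ remainder is uniform in the relevant range.
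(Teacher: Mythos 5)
Your overall strategy matches the paper's: apply the alternative four-point constraints after switching reduction, exhibit a dual certificate whose kernel contains the rank-one direction of the conjectured extremal configuration, and read off $D_4(a)$ as a root of the polynomial governing feasibility. However, the proposal stops at the level of a plan, and the two places where it commits to specifics contain real problems. First, the content of the proof is the certificate itself, and you have not produced it: the paper's certificate uses \emph{only} the constraints $\SDP^{d,2}_0[G;X]$ and $\SDP^{d,2}_3[G;X]$ (no univariate/two-point Gegenbauer part, and no other $k$), pairs them with an explicit $3\times 3$ matrix $F$ and a $2\times 2$ diagonal matrix $\operatorname{diag}(f_1,f_2)$, and determines all eight unknowns by an exact linear system: two equations forcing $F$ to annihilate the vector $\bigl(4,\ (a+1)^3(a-2),\ (a-1)^3(a+2)\bigr)^{\T}$, and six equations matching the coefficients of $N(N-1)$, $y_1$, $y_2$, $z_1$, $z_2$, $z_3$ so that the pairing equals $N(N-1)\bigl(\tfrac12(a^2-1)(a^2-2)-N\bigr)$ identically. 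Your "nonpositivity of the objective at each admissible tuple" formulation is the right general shape, but without identifying which constraints enter (in particular, that $k=3$ is the relevant harmonic degree) and solving for the coefficients, there is no certificate to verify, and the irrational correction $12/\sqrt5$ cannot be derived.

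Second, your description of the feasibility threshold is wrong in a way that would break the last step. The governing polynomial $g_a(x)$ (the common numerator of the $2\times2$ principal minors of $F$) has negative leading coefficient and four real roots, and $D_4(a)$ is the \emph{largest} root, not the smallest; moreover $g_a(d)<0$ on an intermediate range of dimensions (roughly $\tfrac67 a^2 \lesssim d \lesssim 3a^2 - \tfrac{12}{\sqrt5}a$), so the certificate is \emph{not} feasible for all $d\le D_4(a)$. The conclusion for smaller $d$ does not follow from continued feasibility of the certificate, as you assert, but from the trivial monotonicity $N_{1/a}(d)\le N_{1/a}(d')$ for $d\le d'$: one checks positivity of $f_1$, $f_2$ and the principal minors of $F$ only at $d=\lfloor D_4(a)\rfloor$ and then embeds lower-dimensional configurations. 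Without that observation your argument would fail on the range where $F$ loses positive semidefiniteness.
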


The four-point semidefinite bound is a generalization of the three-point semidefinite bound, in the sense that Theorem~\ref{thm: 4sdp_main_mirror} gives upper bounds for $N_{1/a}(d)$ for infinitely many pairs $(a, d)$ more than Theorem~\ref{thm: 3sdp_main_mirror}. To be specific, the largest applicable dimensions $d$ are improved from $D_3(a) = 3a^2 + O(1)$ to $D_4(a) = 3a^2+(12/\sqrt{5})a+O(1)$. The improvements for small $a$ are shown in Table~\ref{tb: applicable_dim}.

\begin{table}[h]
    \centering
    \begin{tabular}{c|ccccc}
        $a$ & 3 & 5 & 7 & 9 & 11 \\
        \hline
        $D_3(a)$ & 11 & 59 & 131 & 227 & 347 \\
        $D_4(a)$ & 14.42 & 64.56 & 144.52 & 250.41 & 380.96 \\
    \end{tabular}
    \caption{$D_3(a)$ and $D_4(a)$ for small $a$}
    \label{tb: applicable_dim}
\end{table}

The four-point semidefinite bound agrees with the work by~\citet{de2021k}. They gave the upper bounds for $N_{1/a}(d)$ for $a = 5, 7, 9, 11$ by the four-point semidefinite programming method. By this method, they showed that $N_{1/5}(65) \leq 276$, $N_{1/7}(145) \leq 1128$, $N_{1/9}(251) \leq 3160$ and $N_{11}(381) \leq 7140$ numerically. Theorem~\ref{thm: 4sdp_main_mirror} serves as a rigorous proof of their results and is applicable for infinitely many $a$.

If a set of equiangular lines is a maximum construction attaining the equality in Theorem~\ref{thm: 4sdp_main_mirror}, we can derive more information about it.
\begin{thm}\label{thm: maximum_mirror}
    Let $a \geq 3$ be an odd integer and $d < D_4(a)$. Then a set of equiangular lines in $\R^d$ with cardinality $(a^2-1)(a^2-2)/2$ and inner product $1/a$ must lie in an $(a^2-2)$-dimensional subspace.
\end{thm}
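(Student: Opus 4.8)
The plan is to reduce the theorem to a single clean statement — that an extremal configuration is a tight frame inside its own span — and then to extract that tightness from the equality conditions of the four-point bound. Fix a spherical projection $X=\{c_1,\dots,c_n\}\subseteq\S^{d-1}$ of the given lines, where $n=(a^2-1)(a^2-2)/2$, and write $r=\dim\operatorname{span}(X)$, so that $r\le d$ and, after restriction to the span, $X$ is a set of $n$ equiangular lines with inner product $1/a$ spanning $\R^{r}$. The goal is to show $r=a^2-2$.

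For the reduction, set $M=\sum_{i=1}^{n}c_ic_i^{\T}$, a positive-definite $r\times r$ matrix of rank $r$ with $\operatorname{tr}M=n$. Since $c_i\cdot c_j=\pm 1/a$ for $i\ne j$, one has
\[
    \operatorname{tr}(M^{2})=\sum_{i,j}(c_i\cdot c_j)^{2}=n+\frac{n(n-1)}{a^{2}}.
\]
Combining this with the inequality $(\operatorname{tr}M)^{2}\le r\,\operatorname{tr}(M^{2})$ yields $r\ge a^{2}n/(a^{2}+n-1)$, and substituting $n=(a^2-1)(a^2-2)/2$ shows that the right-hand side is exactly $a^{2}-2$; thus $r\ge a^{2}-2$ unconditionally, recovering the absolute bound. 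The decisive point is the equality case: as $M$ is positive definite of rank $r$, equality in $(\operatorname{tr}M)^{2}\le r\,\operatorname{tr}(M^{2})$ forces all eigenvalues of $M$ to coincide, i.e. $M=(n/r)I_{r}$, which says exactly that $X$ is a tight frame (equivalently a projective $2$-design) in $\R^{r}$. Hence the theorem is equivalent to establishing this tight-frame identity: once the Cauchy–Schwarz step is an equality we get $r=a^{2}-2$, and the lines lie in an $(a^{2}-2)$-dimensional subspace.

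It remains to force the tight-frame property, which is where Theorem~\ref{thm: 4sdp_main_mirror} enters. Because $r\le d<D_4(a)$, the restricted set $X$ is itself an extremal configuration in $\R^{r}$, attaining equality in that theorem, so I would apply complementary slackness against the explicit dual certificate constructed in its proof: at optimality the distance distribution of $X$ is supported on the prescribed inner products, and each positive-semidefinite block of the dual solution must annihilate the matching moment matrix of $X$. The plan is to read the identity $M=(n/r)I_{r}$ off the degree-two data of the certificate, i.e. to show that the second-moment slackness is forced to vanish. I expect the main obstacle to be precisely this step: one must verify that the certificate of Theorem~\ref{thm: 4sdp_main_mirror} carries strictly positive weight on its degree-two component (equivalently, that the corresponding semidefinite block is strictly definite on the relevant subspace), so that complementary slackness genuinely forces $M$ to be a scalar matrix rather than leaving the second moment slack. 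I anticipate that the strict hypothesis $d<D_4(a)$, rather than $d\le D_4(a)$ as in the bound itself, is exactly what provides this strictness in the dual solution.
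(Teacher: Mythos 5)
Your reduction is a correct but essentially content-free reformulation. Since all off-diagonal inner products are $\pm 1/a$, the quantity $\operatorname{tr}(M^2)=n+n(n-1)/a^2$ is completely determined by $n$ and $a$; the Cauchy--Schwarz inequality $(\operatorname{tr}M)^2\le r\operatorname{tr}(M^2)$ then says only $r\ge a^2-2$ (this is the relative bound), and equality in it is \emph{equivalent} to $r=a^2-2$, which is the statement to be proved. So ``force the tight-frame identity'' is not an intermediate target one can attack separately --- there is no degree-two slack in the problem. Concretely, the two-point constraint at $k=2$ evaluates to the fixed number $n+n(n-1)P^d_2(\a)$, which is strictly positive for every $d>a^2-2$; it cannot be ``forced to vanish'' by complementary slackness, and the dual certificate of Theorem~\ref{thm: 4sdp_main_mirror} in fact carries weight only on the $k=0$ and $k=3$ blocks, so there is no degree-two component to pair against. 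More fundamentally, the SDP constraints are formulated with Gegenbauer polynomials in the \emph{ambient} dimension $d$ and see only the (combinatorially rigid) distance distribution, not the span dimension $r$; no equality condition of the ambient program can directly output $M=(n/r)I_r$.

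The missing idea is the combinatorial regularity that the paper extracts instead. Complementary slackness against the rank-two dual matrix $F$ (strict positivity of its $2\times2$ minors is exactly what $d<D_4(a)$ buys) forces $\SDP^{d,2}_0[G;X]$ to have rank one (Lemma~\ref{lemma: 4sdp_rank}); a variance argument on the per-pair counts $N_{b,b'}$ then shows every pair of lines has the same number of triangles of each switching type (Lemma~\ref{lemma: 4sdp_fix_two_pt}); hence the derived code graph is strongly regular with explicit parameters, its adjacency spectrum is known, and the Gram matrix $I+\tfrac{1}{a+1}M_H-\tfrac{1}{a-1}(J-I-M_H)$ of the derived code is seen to have rank $a^2-3$, giving $r\le a^2-2$. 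It is this passage through the two-fixed-point distribution and the strongly regular graph --- not a second-moment identity --- that converts the equality case of the four-point bound into a bound on the span dimension. As written, your proposal does not supply a workable substitute for that step.
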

By Theorem~\ref{thm: maximum_mirror} along with the uniqueness of the strongly regular graphs with some specific parameters, we proved the uniqueness of the maximum construction of equiangular lines in $\R^d$ for $7 \leq d \leq 14$ with $\a = 1/3$, and for $23 \leq d \leq 64$ with $\a = 1/5$.

The article is organized as follows: In Section~\ref{sec: sdp}, we review the three-point semidefinite constraints developed by~\citet{bachoc2008new}, and the multi-point semidefinite constraints developed by~\citet{musin2013multivariate}. We also prove an alternative version of these constraints in Section~\ref{sec: sdp}. In Section~\ref{sec: switching} we develop the switching reduction on the regime that $X$ is a spherical projection of a set of equiangular lines. In Section~\ref{sec: 4sdp} we give the proof of the main results, namely, Theorem~\ref{thm: 4sdp_main_mirror} and Theorem~\ref{thm: maximum_mirror}. Section~\ref{sec: questions} contains some further questions.

In the following text, $X$ is a spherical $s$-distance set in $\S^{d-1}$ when not specified, and the cardinality is $N = |X|$. The inner product set of $X$ is defined by
\[
    A(X) \coloneqq \{ c \cdot c': c, c' \in X, c \neq c'\}.
\]
So $|A(X)| = s$. We also write $A'(X) = A(X) \cup \{1\}$. When it is specified that $X$ is a spherical projection of a set of equiangular lines in $\R^d$, we fix $A(X) = \{\pm\a\}$ and $a = 1/\a$.
\section{Semidefinite constraints}\label{sec: sdp}

The semidefinite programming method is developed by~\citet{bachoc2008new}. This method is a generalization of the linear programming method developed by~\citet{delsarte1977spherical}.

The two methods are initially developed to bound the kissing numbers in $\R^d$, that is, the maximum cardinality of $X \subseteq \S^{d-1}$ such that $A(X) \subseteq [-1, 1/2]$. In~\citeyear{odlyzko1979new},~\citet{odlyzko1979new} and \citet{levenshtein1979bounds} independently proved that the kissing numbers in $\R^8$ and $\R^{24}$ are $240$ and $196560$ using the linear programming method. \citet{musin2008kissing} proved that the kissing number in $\R^4$ is $24$ by a modification of the linear programming method. Meanwhile,~\citet{bachoc2008new} gave the same result by the semidefinite programming method.

The linear programming method and the semidefinite programming method involve the following parts:
\begin{enumerate}[label = \arabic*.]
    \item Consider the {\bf distance distribution} of a spherical set $X$.
    \item The distance distribution is related to the cardinality $N = |X|$.
    \item Meanwhile, the distance distribution satisfies some specific inequalities or semidefinite constraints.
    \item The upper bound for the cardinality $N$ is given by considering a linear program or a semidefinite program with $N$ being the objective function and the distance distribution being the variables. 
\end{enumerate}

This framework can also be applied to spherical $s$-distance sets, as well as spherical projections of sets of equiangular lines. See~\cite{barg2013new, barg2013new_eq, glazyrin2018upper, okuda2016new, yu2017new} for some upper bounds on spherical $2$-distance sets and equiangular lines proved by using the semidefinite programming method.

\citeauthor{bachoc2008new}'s semidefinite programming method involves the three-point distance distribution satisfying the so-called three-point semidefinite constraints. \citet{musin2013multivariate} generalized the method and developed the multi-point semidefinite constraints of $(m+2)$-point distance distributions for $m \geq 1$. As an application,~\citet{de2021k} gave some numerical upper bounds on the cardinality of sets of equiangular lines by $(m+2)$-point semidefinite programming method for $1 \leq m \leq 4$.

In this section, we review the linear inequalities and the semidefinite constraints and prove an alternative version of the semidefinite constraints. The alternative semidefinite constraints (Theorem~\ref{thm: 3sdp_alternative} and Theorem~\ref{thm: msdp_alternative}) are simpler than the original ones (Theorem~\ref{thm: 3sdp_original} and Theorem~\ref{thm: msdp_original}) when the concerned object $X$ is a spherical $s$-distance set.

\subsection{Two-point linear inequality}

{\bf Part 1.} Consider the {\bf two-point distance distribution}
\[
    x(t) \coloneqq \#\{(c, c') \in X^2: c \cdot c' = t\}.
\]
The value $x(t)$ counts the number of pairs in $X$ with inner product $t$. Clearly $x(1) = N$, and $x(t) > 0$ only when $t = 1$ or $t \in A(X)$.

{\bf Part 2.} $\sum_{t \in A(X)} x(t) = N(N-1)$.

{\bf Part 3.} The inequalities for the distance distribution $x(t)$ arise from the property of a series of polynomials called {\bf Gegenbauer polynomials}. The Gegenbauer polynomials $\{P^d_k(t): k = 0, 1, 2, \dots\}$ are defined by
\begin{align*}
    P^d_0(t) &= 1, \\
    P^d_1(t) &= t, \\
    P^d_k(t) &= \frac{1}{k+d-3}\left((2k+d-4)P^d_{k-1}(t) - (k-1)P^d_{k-2}(t)\right),\ k \geq 2.
\end{align*}

For $c, c' \in \S^{d-1}$, $P^d_k(c \cdot c')$ can be realized as a specific inner product (\citet{delsarte1977spherical}). Therefore
\begin{equation}\label{eq: add_formula}
    \sum_{c, c' \in X} P^d_k(c \cdot c') \geq 0 \text{ for } k = 0, 1, 2, \dots.
\end{equation}
By rewriting the inequality~\eqref{eq: add_formula} in terms of the two-point distance distribution, we have the following theorem.

\begin{thm}\label{thm: 2lp}
    For a spherical set $X$ in $\S^{d-1}$ with inner product set $A$, $\sum_{t \in A} x(t) = N(N-1)$ and
    \[
        N + \sum_{t \in A} x(t)P^d_k(t) \geq 0 \text{ for } k \geq 1.
    \]
\end{thm}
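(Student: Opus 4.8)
The plan is to derive both assertions directly from the definitions together with the Gegenbauer positivity~\eqref{eq: add_formula}, using essentially no machinery beyond a one-line normalization fact about the polynomials $P^d_k$.

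First I would establish the counting identity $\sum_{t \in A} x(t) = N(N-1)$. By definition $x(t)$ counts ordered pairs $(c,c') \in X^2$ with $c \cdot c' = t$. The total number of ordered pairs is $N^2$, which splits into the $N$ diagonal pairs (those with $c = c'$) and the $N(N-1)$ off-diagonal pairs (those with $c \neq c'$). A diagonal pair contributes inner product $c \cdot c = 1$, so it is counted in $x(1)$; an off-diagonal pair contributes some value of $A(X) = A$. Summing $x(t)$ over $t \in A$ therefore counts exactly the off-diagonal pairs, giving $N(N-1)$ (equivalently, $x(1) = N$ and the full sum over $A'(X)$ is $N^2$).

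Next I would prove the inequality by evaluating the left side of~\eqref{eq: add_formula} in terms of the distance distribution. The one fact I need is that $P^d_k(1) = 1$ for every $k \geq 0$; this follows by induction from the defining recursion, since $P^d_0(1) = P^d_1(1) = 1$ and
\[
    P^d_k(1) = \frac{1}{k+d-3}\bigl((2k+d-4) - (k-1)\bigr) = \frac{k+d-3}{k+d-3} = 1.
\]
Splitting the double sum in~\eqref{eq: add_formula} into its diagonal and off-diagonal parts and grouping the off-diagonal pairs by their common inner product value then gives
\[
    0 \leq \sum_{c, c' \in X} P^d_k(c \cdot c') = \sum_{c \in X} P^d_k(1) + \sum_{t \in A} x(t) P^d_k(t) = N + \sum_{t \in A} x(t) P^d_k(t),
\]
which is the desired inequality for every $k \geq 1$. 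The case $k = 0$ is equally valid but yields only the trivial bound $N^2 \geq 0$, which is why the statement restricts to $k \geq 1$.

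Since the argument reduces to a direct reindexing of~\eqref{eq: add_formula} together with the normalization $P^d_k(1) = 1$, I do not anticipate any genuine obstacle; the only point requiring care is the bookkeeping that cleanly separates the diagonal contribution $N$ from the off-diagonal terms $\sum_{t \in A} x(t) P^d_k(t)$.
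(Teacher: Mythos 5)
Your proposal is correct and follows exactly the route the paper intends: Theorem~\ref{thm: 2lp} is obtained by reindexing the double sum in~\eqref{eq: add_formula} over the inner product values, separating the diagonal contribution $N$ (via $P^d_k(1)=1$, which your induction verifies) from the off-diagonal terms $\sum_{t\in A}x(t)P^d_k(t)$. The counting identity is likewise handled as the paper does, so there is nothing to add.
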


{\bf Part 4.} One can establish a linear program using Theorem~\ref{thm: 2lp} to bound the cardinality $N$.

\subsection{Three-point semidefinite constraint}

{\bf Part 1.} Consider the {\bf three-point distance distribution}
\[
    y(u, v, t) \coloneqq \#\{(b, c, c') \in X^3: b \cdot c = u, b \cdot c' = v, c \cdot c' = t\}.
\]
The value $y(u, v, t)$ counts the number of triples in $X$ with pairwise inner products $u$, $v$ and $t$.

{\bf Part 2.} The distance distributions are related to each other and the cardinality $N$ by
\begin{itemize}
    \item $y(1, 1, 1) = x(1) = N$,
    \item $y(1, t, t) = y(t, 1, t) = y(t, t, 1) = x(t)$,
    \item $y(1, u, v) = y(u, 1, v) = y(u, v, 1) = 0$ for $u \neq v$, and
    \item $\sum_{u, v, t \in A(X)} y(u, v, t) = N(N-1)(N-2)$.
\end{itemize}

{\bf Part 3.} \citet{bachoc2008new} proved that, there is a specific series of matrices $\{Y^d_{k, n}: k \geq 0, n \geq 0\}$ such that
\begin{equation}\label{eq: 3sdp_doublesum}
    \sum_{c, c' \in X} Y^d_{k, n}(b \cdot c, b \cdot c', c \cdot c') \succeq 0
\end{equation}
for any fixed point $b \in \S^{d-1}$. The matrices $Y^d_{k, n}$ are defined by
\begin{align*}
    Y^d_{k, n}(u, v, t) &\coloneqq Q^d_k(u, v, t) \begin{pmatrix}
    1 \\ u \\ \vdots \\ u^n
    \end{pmatrix}
    \begin{pmatrix}
    1 & v & \cdots & v^n
    \end{pmatrix}, \\
    Q^d_k(u, v, t) &\coloneqq (1-u^2)^{k/2}(1-v^2)^{k/2}P^{d-1}_k\left(\frac{t-uv}{\sqrt{(1-u^2)(1-v^2)}}\right).
\end{align*}
The order of $Y^d_{k, n}(u, v, t)$ is $n+1$, and in which every entry is a polynomial of $u, v, t$ in degree $k$.

By summing $b$ in~\eqref{eq: 3sdp_doublesum} over elements of $X$, we have the following constraints about $y(u, v, t)$.

\begin{thm}[\citet{bachoc2008new}]\label{thm: 3sdp_original}
For a spherical set $X$ in $\S^{d-1}$ with inner product set $A$,
\[
    \sum_{u, v, t \in A}y(u, v, t) = (N-2)\sum_{t \in A}x(t)
\]
and
\begin{align*}
    & NY^d_{k, n}(1, 1, 1) & \\
    +& \sum_{t \in A}x(t)\Big(Y^d_{k, n}(1, t, t) + Y^d_{k, n}(t, 1, t) + Y^d_{k, n}(t, t, 1)\Big) & \\
    +& \sum_{u, v, t \in A} y(u, v, t)Y^d_{k, n}(u, v, t) & \succeq 0 \text{ for } k, n \geq 0.
\end{align*}
\end{thm}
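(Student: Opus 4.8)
The plan is to obtain both assertions by summing the pointwise semidefinite inequality~\eqref{eq: 3sdp_doublesum} over the base point $b$ and then bookkeeping the resulting triple sum according to the values of the pairwise inner products. Concretely, \eqref{eq: 3sdp_doublesum} holds for every fixed $b \in \S^{d-1}$, hence in particular for each $b \in X$. Since a finite sum of positive semidefinite matrices is again positive semidefinite, summing over all $b \in X$ yields
\[
    \sum_{b \in X}\sum_{c, c' \in X} Y^d_{k, n}(b \cdot c, b \cdot c', c \cdot c') \succeq 0
\]
for all $k, n \geq 0$. It then remains only to rewrite this left-hand side in terms of the distributions $x$ and $y$.

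The key observation is that the summand $Y^d_{k, n}(b \cdot c, b \cdot c', c \cdot c')$ depends on the triple $(b, c, c')$ only through its vector of pairwise inner products $(u, v, t) = (b \cdot c, b \cdot c', c \cdot c')$. I would therefore group the $N^3$ terms of the triple sum according to the value of $(u, v, t)$, which ranges over $(A \cup \{1\})^3$, obtaining $\sum_{u, v, t \in A \cup \{1\}} y(u, v, t)\, Y^d_{k, n}(u, v, t)$. The next step is a case analysis on how many of $u, v, t$ equal $1$. Because $u = 1$ forces $b = c$, $v = 1$ forces $b = c'$, and $t = 1$ forces $c = c'$, having any two of the three coordinates equal to $1$ already forces the third; so the only possibilities are that all three equal $1$, exactly one equals $1$, or none does. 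These three regimes correspond respectively to $b = c = c'$ (contributing $y(1,1,1) = N$ copies of $Y^d_{k, n}(1,1,1)$), to exactly two of the points coinciding (contributing, for each $t \in A$, the three terms $x(t)\,Y^d_{k, n}(1,t,t)$, $x(t)\,Y^d_{k, n}(t,1,t)$, $x(t)\,Y^d_{k, n}(t,t,1)$ via the relations $y(1,t,t) = y(t,1,t) = y(t,t,1) = x(t)$), and to three distinct points (contributing $\sum_{u,v,t \in A} y(u,v,t)\,Y^d_{k, n}(u,v,t)$). Adding the three regimes recovers exactly the claimed semidefinite inequality.

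For the scalar identity $\sum_{u,v,t \in A} y(u,v,t) = (N-2)\sum_{t \in A} x(t)$, I would argue by double counting the ordered triples of \emph{pairwise distinct} points of $X$. On one hand, since $1 \notin A$, a triple $(b,c,c')$ has all three pairwise inner products in $A$ precisely when $b, c, c'$ are distinct, so the left-hand side counts exactly these triples. On the other hand, each such triple arises by first choosing an ordered pair of distinct points, of which there are $\sum_{t \in A} x(t)$, and then choosing the remaining point in $N-2$ ways, giving the right-hand side.

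The computational content here is light: once~\eqref{eq: 3sdp_doublesum} is granted, the theorem is pure reorganization. The only place demanding care, and the main potential source of error, is the degenerate bookkeeping of the second paragraph: correctly producing the factor accounting for the three ways a single coordinate can equal $1$, and verifying that no ``mixed'' term with exactly two coordinates equal to $1$ survives. The genuinely hard input, namely the construction of the matrices $Y^d_{k, n}$ and the proof that the pointwise sum~\eqref{eq: 3sdp_doublesum} is positive semidefinite (which rests on the positive-definiteness of the associated matrix-valued zonal kernels on $\S^{d-1}$), is precisely the part I am permitted to take as given.
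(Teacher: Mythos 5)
Your proposal is correct and matches the paper's approach exactly: the paper likewise obtains this theorem by summing the pointwise constraint~\eqref{eq: 3sdp_doublesum} over $b \in X$ and regrouping the $N^3$ terms by the inner-product triple, with the degenerate cases handled via the relations $y(1,t,t)=y(t,1,t)=y(t,t,1)=x(t)$ and $y(1,1,1)=N$ stated in Part~2. Your case analysis (two coordinates equal to $1$ forces the third) and the double-counting argument for the scalar identity are exactly the bookkeeping the paper leaves implicit.
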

{\bf Part 4.} One can establish a semidefinite program using Theorem~\ref{thm: 3sdp_original} to bound the cardinality $N$. In \citet{bachoc2008new}, the variables of the semidefinite program are $x(t)/N$ and $y(u, v, t)/N$, and the objective function is $1 + \sum_t x(t)/N = N$.

Below shows an alternative version of Theorem~\ref{thm: 3sdp_original}.

{\bf Part 3 (alternative version).} Let $A' = A \cup \{1\}$. The semidefinite constraint in Theorem~\ref{thm: 3sdp_original} is equivalent to

\[
    \sum_{u, v, t \in A'}y(u, v, t)Q^d_k(u, v, t) \begin{pmatrix}1 \\ u \\ \vdots \\ u^n\end{pmatrix}
    \begin{pmatrix}1 & v & \cdots & v^n\end{pmatrix} \succeq 0.
\]
The positive semidefiniteness means that
\[
    \textbf{a}^{\T}
    \left(\sum_{u, v, t \in A'}y(u, v, t)Q^d_k(u, v, t) \begin{pmatrix}1 \\ u \\ \vdots \\ u^n\end{pmatrix}
    \begin{pmatrix}1 & v & \cdots & v^n\end{pmatrix}\right) \textbf{a} \geq 0
\]
for all $\textbf{a} \in \R^{n+1}$, or
\begin{equation}\label{eq: 3sdp_altform_1}
    \sum_{u, v, t \in A'} y(u, v, t)Q^d_k(u, v, t)f(u)f(v) \geq 0 
\end{equation}
for all polynomials $f(x)$ with degree at most $n$. Since $n$ can be arbitrary, the inequality~\eqref{eq: 3sdp_altform_1} holds for any polynomial $f(x)$.

Let $\textstyle\{\e_u: u \in A'\}$ be the standard basis of $\R^{s+1}$. We rewrite \eqref{eq: 3sdp_altform_1} as
\[
    \sum_{u, v, t \in A'} y(u, v, t)Q^d_k(u, v, t) (\mathbf{f}^{\T}\e_u)(\e_v^{\T}\mathbf{f}) \geq 0
\]
where $\textstyle \mathbf{f} = \sum_{u \in A'} f(u)\e_u$ for some polynomial $f(x)$. In fact, $\mathbf{f}$ can be taken as arbitrary vector in $\R^{s+1}$; that is, given any vector $\textstyle \mathbf{f} = \sum_{u \in A'} f_u\e_u$, we can always find a polynomial $f(x)$ such that
\[
    f(u) = f_u,\ u \in A'.
\]
Therefore, the matrix
\begin{equation}\label{eq: 3sdp_altform_2}
    \sum_{u, v, t \in A'} y(u, v, t)Q^d_k(u, v, t)\e_u\e_v^{\T}
\end{equation}
is positive semidefinite. We denote the matrix~\eqref{eq: 3sdp_altform_2} by $\SDP^d_k(X)$, and call it by the {\bf alternative three-point semidefinite constraint}.

\begin{thm}[Alternative three-point semidefinite constraint]\label{thm: 3sdp_alternative}
For a spherical $s$-distance set $X$ in $\S^{d-1}$ with inner product set $A$,
\[
    \sum_{u, v, t \in A}y(u, v, t) = (N-2)\sum_{t \in A}x(t).
\]
Let $\{e_u: u \in A \cup \{1\}\}$ be the standard basis of $\R^{s+1}$. Then
\begin{align*}
    \SDP^d_k(X) &= NQ^d_k(1, 1, 1) \e_1\e_1^{\T} & \\
    +& \sum_{t \in A} x(t)\Big(Q^d_k(1, t, t)\e_1\e_t^{\T} + Q^d_k(t, 1, t)\e_t\e_1^{\T} + Q^d_k(t, t, 1)\e_t\e_t^{\T}\Big) & \\
    +& \sum_{u, v, t \in A} y(u, v, t)Q^d_k(u, v, t) \e_u\e_v^{\T} & \succeq 0
\end{align*}
for any $k \geq 0$.
\end{thm}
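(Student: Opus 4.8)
The plan is to derive the alternative constraint directly from the original three-point constraint (Theorem~\ref{thm: 3sdp_original}) by repackaging its matrix inequality, and to obtain the counting identity by a direct double count. For the identity, observe that $\sum_{u,v,t\in A}y(u,v,t)$ counts ordered triples $(b,c,c')\in X^3$ for which all three pairwise inner products lie in $A$; since $X$ is an $s$-distance set, the inner product of two points of $X$ lies in $A$ exactly when the points are distinct, so this count equals the number of ordered triples of pairwise distinct points, namely $N(N-1)(N-2)$. On the other hand $\sum_{t\in A}x(t)=N(N-1)$ by the two-point relation, whence $(N-2)\sum_{t\in A}x(t)=N(N-1)(N-2)$, giving the identity.

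For the semidefinite part, the first step is to absorb the $N$- and $x(t)$-terms of Theorem~\ref{thm: 3sdp_original} into a single sum indexed by $A'=A\cup\{1\}$. Using the Part~2 relations $y(1,1,1)=N$, $y(1,t,t)=y(t,1,t)=y(t,t,1)=x(t)$, and $y(1,u,v)=0$ for $u\neq v$ (together with the analogous vanishing when exactly one or two coordinates equal $1$), the matrix appearing in Theorem~\ref{thm: 3sdp_original} is exactly $\sum_{u,v,t\in A'}y(u,v,t)\,Y^d_{k,n}(u,v,t)$. Thus the original constraint reads $\sum_{u,v,t\in A'}y(u,v,t)\,Y^d_{k,n}(u,v,t)\succeq 0$ for all $n\geq 0$.

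The key step is to collapse this infinite family of constraints (one per $n$) into the single finite matrix $\SDP^d_k(X)$. Writing $Y^d_{k,n}(u,v,t)=Q^d_k(u,v,t)\,\mathbf{v}_n(u)\mathbf{v}_n(v)^{\T}$ with $\mathbf{v}_n(u)=(1,u,\dots,u^n)^{\T}$, the quadratic form $\mathbf{a}^{\T}\bigl(\sum_{u,v,t\in A'}y\,Y^d_{k,n}\bigr)\mathbf{a}$ equals $\sum_{u,v,t\in A'}y(u,v,t)Q^d_k(u,v,t)f(u)f(v)$, where $f(x)=\sum_{i=0}^n a_ix^i$ is an arbitrary polynomial of degree at most $n$. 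Hence the original family is equivalent to this scalar inequality holding for every polynomial $f$. Because $A'$ consists of $s+1$ distinct reals, Lagrange interpolation makes the evaluation map $f\mapsto(f(u))_{u\in A'}$ a surjection onto $\R^{s+1}$ (realized already by polynomials of degree at most $s$); setting $\mathbf{f}=\sum_{u\in A'}f(u)\e_u$, the scalar inequality becomes $\mathbf{f}^{\T}\SDP^d_k(X)\mathbf{f}\geq 0$ for all $\mathbf{f}\in\R^{s+1}$, i.e.\ $\SDP^d_k(X)\succeq 0$. Finally, splitting $\SDP^d_k(X)=\sum_{u,v,t\in A'}y(u,v,t)Q^d_k(u,v,t)\e_u\e_v^{\T}$ back along $A'=A\cup\{1\}$ via the same Part~2 relations recovers the displayed three-term decomposition.

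The main obstacle is the collapse in the third step: one must verify that the single matrix condition $\SDP^d_k(X)\succeq 0$ captures the entire family $\{\,\cdot\succeq 0 : n\geq 0\,\}$. This rests on the surjectivity of polynomial evaluation on the finite set $A'$ (equivalently, nonsingularity of the Vandermonde matrix on $A'$), which guarantees that testing against all vectors $\mathbf{f}\in\R^{s+1}$ is no stronger than testing against the vectors $(\,f(u)\,)_{u\in A'}$ arising from polynomials, and that $n=s$ already suffices in one direction. Care is needed only to confirm that no admissible test vector is lost and that the rank-one repackaging $Y^d_{k,n}=Q^d_k\,\mathbf{v}_n\mathbf{v}_n^{\T}$ is an exact identity rather than merely an inequality.
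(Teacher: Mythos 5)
Your proposal is correct and follows essentially the same route as the paper: rewrite the original constraint of Theorem~\ref{thm: 3sdp_original} as a single sum over $A'=A\cup\{1\}$ via the Part~2 relations, interpret positive semidefiniteness as the scalar inequality $\sum_{u,v,t\in A'}y(u,v,t)Q^d_k(u,v,t)f(u)f(v)\geq 0$ for arbitrary polynomials $f$, and then use interpolation on the $s+1$ distinct points of $A'$ to let the test vector range over all of $\R^{s+1}$. Your explicit remarks on the Vandermonde nonsingularity and on $n=s$ sufficing match the paper's argument and its subsequent remark.
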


\begin{rmk}\label{rmk: alternative_simpler}
When the concerned object $X$ has a finite number of distances, the alternative semidefinite constraints $\SDP^d_k(X)$ are simpler than the original ones in Theorem~\ref{thm: 3sdp_original}. For a spherical $s$-distance set $X$, the original constraints consist of sums of $s^3 + 3s + 1$ matrices with order $(n+1)$; in the alternative constraints $\SDP^d_k(X)$, there are only $s^3 + 3s + 1$ terms in a $(s+1) \times (s+1)$ matrix.
\end{rmk}

\begin{rmk}
One might think that the original semidefinite constraints are better with larger matrix size $(n+1)$. However, an argument similar to the proof of Theorem~\ref{thm: 3sdp_alternative} indicates that the restricting ability on $y(u, v, t)$ of the original semidefinite constraints is the same as the alternative constraints whenever $n \geq s$. Therefore, it suffices to take $n=s$ when applying the original semidefinite constraints.
\end{rmk}

\begin{eg}\label{eg: 3sdp_alternative}
Consider a $2$-distance set $X$ with $A(X) = \{\a, \b\}$. The alternative three-point semidefinite constraint is
\[
    \SDP^d_k(X) = \begin{pmatrix*}[r]
    NQ^d_k(1,1,1)
    & x(\a)Q^d_k(1,\a,\a)
    & x(\b)Q^d_k(1,\b,\b) \\[1em]
    x(\a)Q^d_k(\a,1,\a)
    & \begin{matrix*}[r] \phantom{+} x(\a)Q^d_k(\a,\a,1) \\ + y(\a,\a,\a)Q^d_k(\a,\a,\a) \\ + y(\a,\a,\b)Q^d_k(\a,\a,\b)\end{matrix*}
    & \begin{matrix*}[l] \phantom{+} y(\a,\b,\a)Q^d_k(\a,\b,\a) \\ + y(\a,\b,\b)Q^d_k(\a,\b,\b) \end{matrix*} \\[1em]
    x(\b)Q^d_k(\b,1,\b)
    & \begin{matrix*}[l] \phantom{+} y(\b,\a,\a)Q^d_k(\b,\a,\a) \\ + y(\b,\a,\b)Q^d_k(\b,\a,\b) \end{matrix*}
    & \begin{matrix*}[r] \phantom{+} x(\b)Q^d_k(\b,\b,1) \\ + y(\b,\b,\a)Q^d_k(\b,\b,\a) \\ + y(\b,\b,\b)Q^d_k(\b,\b,\b) \end{matrix*}
    \end{pmatrix*}\succeq 0.
\]
\end{eg}

%%%%%%%%%%%%%%%%%%%%%%%%%%%%%

\subsection{Multi-point semidefinite constraint}

The multi-point semidefinite constraints are generalizations of the three-point semidefinite constraints. Fix $m$ to be a positive integer with $1 \leq m \leq d-2$. The multi-point constraints become the three-point constraints when $m=1$.

{\bf Part 1.} We use the notation of {\bf Gram matrices} to define the {\bf multi-point distance distribution}.

\begin{defn} Suppose $B = \{b_1, \dots, b_m\} \subseteq X$ is a set of unit vectors, and $c, c' \in \S^{d-1}$.
\begin{enumerate}[label=(\alph*)]
    \item The Gram matrix of $B$ is an $m \times m$ matrix
    \[
        G = G(B) \coloneqq \begin{pmatrix}b_p \cdot b_q\end{pmatrix}_{1 \leq p, q \leq m}.
    \]
    The Gram matrix $G$ is symmetric and positive semidefinite. If $G$ is positive definite, i.e., $B$ is linearly independent, we say $G$ is a {\bf proper Gram matrix}.
    \item For $c \in \S^{d-1}$, denote the vector $B^{\T} \cdot c \coloneqq (b_1 \cdot c, \dots, b_m \cdot c)^{\T}$ in $\R^m$.
    \item Suppose $\u = B^{\T} \cdot c \in \R^m$, then the Gram matrix of $B \cup \{c\}$ is
    \[
        G(B, c) = \begin{pmatrix}
            G(B) & B^{\T} \cdot c \\
            c^{\T} \cdot B & c \cdot c
        \end{pmatrix} = \begin{pmatrix}
            G & u \\
            u^{\T} & 1
        \end{pmatrix}.
    \]
    We denote the $(m+1) \times (m+1)$ Gram matrix by the pair $(G; \u)$.
    \item Suppose $\u = B^{\T} \cdot c$ and $\v = B^{\T} \cdot c'$, then the Gram matrix of $B \cup \{c, c'\}$ is
    \[
        G(B, c, c') = \begin{pmatrix}
            G(B) & B^{\T} \cdot c & B^{\T} \cdot c' \\
            c^{\T} \cdot B & c \cdot c & c \cdot c' \\
            c'^{\T} \cdot B & c' \cdot c & c' \cdot c'
        \end{pmatrix} = \begin{pmatrix}
            G & \u & \v \\
            \u^{\T} & 1 & t \\
            \v^{\T} & t & 1
        \end{pmatrix}.
    \]
    We denote the $(m+2) \times (m+2)$ Gram matrix by the tuple $(G; \u, \v, t)$.
\end{enumerate}
\end{defn}

\begin{defn} Suppose $G$ is an $m \times m$ proper Gram matrix, $\u, \v \in \R^m$ and $t \in \R$. Define the $(m+2)$-point distance distribution by
\[
    N_m(G; \u, \v, t) \coloneqq \#\{(B, c, c') \in X^{m+2}: G(B, c, c') = (G; \u, \v, t)\},
\]
that is, the number of $(m+2)$-tuples $(B, c, c')$ with a specific Gram matrix. Also, we can define the $(m+1)$-point and $m$-point distance distributions
\begin{align*}
    N_{m-1}(G; \u) &\coloneqq \#\{(B, c) \in X^{m+1}: G(B, c) = (G; \u)\}, \\
    N_{m-2}(G) &\coloneqq \#\{B \in X^m: G(B) = G\}.
\end{align*}
\end{defn}

\begin{rmk} For $m=1$, $G = \begin{pmatrix}1 \end{pmatrix}$ and $N_1(G; \u, \v, t)$ is the three-point distance distribution $y(u, v, t)$; for $m=0$, $(m+2)$-tuples $(B, c, c')$ degenerated to pairs, so $N_0(G)$ is the two-point distance distribution $x(t)$ if $G = \begin{pmatrix}1 & t \\ t & 1\end{pmatrix}$; for $N=-1$, define $N_{-1} = N$.
\end{rmk}

{\bf Part 2.} The distance distribution $N_m$ is related to the cardinality $N$ by the following equation:
\[
    \sum_{G}\sum_{\u, \v \in A(X)^m}\sum_{t \in A(X)} N_m(G; \u, \v, t) = \frac{N!}{(N-m-2)!},
\]
in which $G$ is summed over all possible $m \times m$ proper Gram matrices. Furthermore, we have the following relations for $N_m$ and their degenerations.

\begin{prop}\label{prop: N}
Let $G$ be an $m \times m$ proper Gram matrix and $N_m$ be defined as above. Let $G_{(p)}$ be the $p$-th column of $G$.
\begin{enumerate}[label=(\alph*)]
    \item Suppose $u_p = v_q = 1$ for some $1 \leq p, q \leq m$. Then
    \[
        N_m(G; \u, \v, t) = \begin{dcases}
        N_{m-2}(G) & \text{ if } \u = G_{(p)}, \v = G_{(q)}, t = G_{pq}, \\
        0 & \text{ otherwise.}
        \end{dcases}
    \]
    \item Suppose $u_p = 1$ for some $1 \leq p \leq m$. Then
    \[
        N_m(G; \u, \v, t) = \begin{dcases}
        N_{m-1}(G; \v) & \text{ if } \u = G_{(p)}, t = v_p, \\
        0 & \text{ otherwise.}
        \end{dcases}
    \]
    Similarly, suppose $v_q = 1$ for some $1 \leq q \leq m$. Then
    \[
        N_m(G; \u, \v, t) = \begin{dcases}
        N_{m-1}(G; \u) & \text{ if } \v = G_{(q)}, t = u_q, \\
        0 & \text{ otherwise.}
        \end{dcases}
    \]
    \item Suppose $t = 1$. Then
    \[
        N_m(G; \u, \v, t) = \begin{dcases}
        N_{m-1}(G; \u) & \text{ if } \u = \v, \\
        0 & \text{ otherwise.}
        \end{dcases}
    \]
\end{enumerate}
\end{prop}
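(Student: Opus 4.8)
The plan is to reduce all three parts to the elementary observation that for unit vectors $x, y \in \S^{d-1}$ one has $x \cdot y = 1$ if and only if $x = y$, since $\|x - y\|^2 = 2 - 2(x \cdot y)$ vanishes exactly when the inner product equals $1$. Each hypothesis in Proposition~\ref{prop: N} therefore forces one of the vectors among $c, c', b_1, \dots, b_m$ to coincide with another, which collapses the $(m+2)$-tuple being counted onto a tuple of strictly lower arity. The proposition then follows by exhibiting, in each case, an explicit bijection between the two families of (ordered) tuples and reading off the forced Gram entries to determine when the fibre is nonempty.

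First I would treat part (c). The hypothesis $t = 1$ means $c \cdot c' = 1$, hence $c = c'$; consequently $\u = B^\T \cdot c = B^\T \cdot c' = \v$, so no admissible tuple exists when $\u \neq \v$ and $N_m(G; \u, \v, 1) = 0$. When $\u = \v$, the assignment $(B, c, c') \mapsto (B, c)$ is a bijection from the tuples counted by $N_m(G; \u, \u, 1)$ onto those counted by $N_{m-1}(G; \u)$, with inverse $(B, c) \mapsto (B, c, c)$, giving the claimed equality. Part (b) is analogous: $u_p = 1$ forces $c = b_p$, hence $\u = B^\T \cdot b_p = G_{(p)}$ and $t = b_p \cdot c' = v_p$, so the distribution vanishes unless $\u = G_{(p)}$ and $t = v_p$, and under those equalities $(B, c, c') \mapsto (B, c')$ is a bijection onto the tuples counted by $N_{m-1}(G; \v)$ with inverse $(B, c') \mapsto (B, b_p, c')$; the variant with $v_q = 1$ is identical after exchanging the roles of $c$ and $c'$.

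Part (a) combines the two reductions: $u_p = v_q = 1$ forces $c = b_p$ and $c' = b_q$, hence $\u = G_{(p)}$, $\v = G_{(q)}$ and $t = b_p \cdot b_q = G_{pq}$, while $(B, c, c') \mapsto B$ is a bijection onto the tuples counted by $N_{m-2}(G)$ with the evident inverse $B \mapsto (B, b_p, b_q)$. In all three cases the "otherwise $0$" clause is precisely the statement that the forced entries are incompatible with the prescribed data.

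The only point requiring care — and where I would slow down — is the bookkeeping that verifies the "otherwise $0$" clauses together with the well-definedness of each bijection on \emph{ordered} tuples. Here the hypothesis that $G$ is a \textbf{proper} Gram matrix keeps the argument clean: positive definiteness makes $B$ linearly independent, so at most one index $p$ can satisfy $u_p = 1$ (and likewise for $v_q = 1$), ruling out conflicting forced identifications and guaranteeing that the maps $(B, c, c') \mapsto B$, $(B, c')$, $(B, c)$ and their stated inverses are genuinely mutually inverse. Once the forced equalities are shown to pin down the entire relevant portion of the extended Gram matrix, every surviving tuple lands in exactly one fibre and the two counts coincide.
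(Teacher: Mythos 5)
Your proposal is correct and follows essentially the same route as the paper's proof: both arguments rest on the observation that unit vectors with inner product $1$ coincide, so each hypothesis forces $c$ or $c'$ to equal one of the $b_i$ (or each other) and the extended Gram matrix degenerates to that of a shorter tuple. You make the resulting bijections and the ``otherwise $0$'' bookkeeping more explicit than the paper does, but the underlying idea is identical.
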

\begin{proof}
For (a), recall that $N_m(G; \u, \v, t)$ counts the tuples $(B, c, c')$ with
\[
    b_p \cdot b_q = G_{pq},\ b_p \cdot c = u_p,\ b_q \cdot c' = v_q,\ c \cdot c' = t. 
\]
If $u_p = v_q = 1$, then $c = b_p$ and $c' = b_q$. Therefore, the $(m+1)$-th column and the $p$-th column in $(G; \u, \v, t)$ must be the same, i.e., $\u = G_{(p)}$ and $t = v_p$. Similarly, $\v = G_{(q)}$ and $t = u_q = G_{pq}$. Hence $N_m(G; \u, \v, t) > 0$ only if $\u = G_{(p)}$, $\v = G{(q)}$ and $t = G_{pq}$; also, the Gram matrix $(G; \u, \v, t)$ of $(B, c, c')$ degenerates to the Gram matrix $G$ of $B$. \\
For (b), if $u_p = 1$ then $c = b_p$, and the Gram matrix of $(B, c, c')$ degenerates to that of $(B, c')$; if $v_q = 1$ then $c' = b_p'$, and the Gram matrix degenerates to that of $(B, c)$. For (c), if $t = 1$ then $c = c'$, and the Gram matrix degenerates to that of $(B, c)$.
\end{proof}

{\bf Part 3.} \citet{musin2013multivariate} proved a multi-point generalization for the semidefinite constraints~\eqref{eq: 3sdp_doublesum}, which we use to formulate the constraints for the multi-point distance distribution $N_m$.

\begin{defn}[\citet{musin2013multivariate}]
    Let $G$ be an $m \times m$ proper Gram matrix, $\u, \v \in \R^m$ and $t \in \R$. Define the {\bf multivariate Gegenbauer polynomials} by
    \[
        Q^{d, m}_k(G; \u, \v, t) \coloneqq (1 - \u G^{-1}\u^{\T})^{k/2}(1 - \v G^{-1}\v^{\T})^{k/2} P^{d-m}_k\left(\frac{t- \u G^{-1}\v^{\T}}{\sqrt{(1 - \u G^{-1}\u^{\T})(1 - \v G^{-1}\v^{\T})}}\right).
    \]
    Let $\mathcal{B}_n(\u)$ be the column vector collecting all monomials of $\{u_1, \dots, u_m\}$ with degree at most $n$. Define the matrices $Y^{d, m}_{k, n}$ by
    \[
        Y^{d, m}_{k, n}(G; \u, \v, t) \coloneqq Q^{d, m}_k(G; \u, \v, t)\mathcal{B}_n(u)\mathcal{B}_n(v)^{\T}.
    \]
\end{defn}
The order of $Y^{d, m}_{k, n}$ is $\binom{n+m}{m}$, and in which every entry is a polynomial of $\u, \v, t$ in degree $k$. When $m=1$, the Gram matrix $G$ is the identity matrix of order $1$, and the above definitions of $Q^{d, m}_k$ and $Y^{d, m}_{k, n}$ agree with $Q^d_k$ and $Y^d_{k, n}$ defined by~\citet{bachoc2008new}.

Fix a subset $B = \{b_1, \dots, b_m\} \subseteq X$ of linearly independent vectors. Let $G = G(B)$ be the Gram matrix of $B$. \citet{musin2013multivariate} proved that
\begin{equation}\label{eq: msdp_doublesum}
    \sum_{c, c' \in X} Y^{d, m}_{k, n}(G; B^{\T} \cdot c, B^{\T} \cdot c', c \cdot c') \succeq 0.
\end{equation}

By summing $B$ in~\eqref{eq: msdp_doublesum} over all subsets of $X$ with fixed proper Gram matrix $G$, we have the following constraints about $N_m$.
\begin{thm}\label{thm: msdp_original}
    For a spherical set $X$ in $\S^{d-1}$ with inner product set $A$,
    \[
        \sum_{G}\sum_{\u, \v \in A^m}\sum_{t \in A} N_m(G; \u, \v, t) = \frac{N!}{(N-m-2)!},
    \]
    in which $G$ is summed over all possible $m \times m$ proper Gram matrices. Let $A' = A \cup \{1\}$. For a fixed $m \times m$ proper Gram matrix $G$,
    \[
        \sum_{\u, \v \in (A')^m}\sum_{t \in A'} N_m(G; \u, \v, t)Y^{d, m}_{k, n}(G; \u, \v, t) \succeq 0 \text{ for } k, n \geq 0.
    \]
\end{thm}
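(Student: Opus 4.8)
The statement bundles two claims: a counting identity fixing the total mass of the distribution $N_m$, and the positive semidefiniteness of the matrix obtained by pairing the multivariate Gegenbauer matrices $Y^{d,m}_{k,n}$ against $N_m$. I would treat them separately, the semidefinite part being the substantive one.

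For the counting identity the plan is a double count of ordered $(m+2)$-tuples of points of $X$. The right-hand side $\frac{N!}{(N-m-2)!}=N(N-1)\cdots(N-m-1)$ is the number of ordered selections of $m+2$ distinct points $(b_1,\dots,b_m,c,c')$. On the left I organize the same tuples by Gram type, so that $N_m(G;\u,\v,t)$ counts those realizing $(G;\u,\v,t)$: here the restriction $\u,\v\in A^m$ forces $c,c'$ to differ from every $b_p$, $t\in A$ forces $c\neq c'$, and ranging over all proper $G$ accounts for all admissible frames $B$. Summing $N_m$ over proper $G$ and over $\u,\v\in A^m,\ t\in A$ therefore recovers the count of all ordered $(m+2)$-tuples of distinct points, which equals the right-hand side; the only point to record is that each distinct frame $B$ contributing here is linearly independent, so that its Gram matrix is indeed proper.

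For the semidefinite constraint the plan is to sum Musin's fixed-frame inequality \eqref{eq: msdp_doublesum} over frames and regroup. For each ordered tuple $B\in X^m$ with $G(B)=G$, \eqref{eq: msdp_doublesum} asserts that $\sum_{c,c'\in X}Y^{d,m}_{k,n}(G;B^{\T}\cdot c,B^{\T}\cdot c',c\cdot c')\succeq 0$. Since a finite sum of positive semidefinite matrices is positive semidefinite, I would sum this over all $B\in X^m$ with $G(B)=G$, producing a triple sum over $(B,c,c')\in X^{m+2}$ with $G(B)=G$. The summand $Y^{d,m}_{k,n}(G;\u,\v,t)$ depends on the tuple only through $\u=B^{\T}\cdot c$, $\v=B^{\T}\cdot c'$, $t=c\cdot c'$; grouping the triple sum by the value of $(\u,\v,t)$ replaces the number of tuples in each group by $N_m(G;\u,\v,t)$, yielding exactly $\sum_{\u,\v\in(A')^m}\sum_{t\in A'}N_m(G;\u,\v,t)Y^{d,m}_{k,n}(G;\u,\v,t)\succeq 0$.

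The care lies entirely in the ranges of $(\u,\v,t)$ after regrouping. Because $c,c'$ run over all of $X$, a point $c$ may coincide with some $b_p$ (forcing an entry of $\u$ to equal $1$, with $\u=G_{(p)}$) or $c$ may equal $c'$ (forcing $t=1$); this is exactly why the regrouped sum ranges over $\u,\v\in(A')^m$ and $t\in A'$ rather than over $A$. The one thing I would verify in detail is that these boundary tuples are counted once and correctly: this is the content of Proposition~\ref{prop: N}, which identifies each degenerate $N_m(G;\u,\v,t)$ with the appropriate lower-order distribution $N_{m-1}$ or $N_{m-2}$. I expect this bookkeeping — confirming that the grouped counts are exhaustive and disjoint across all degeneracies — to be the only delicate step; once it is settled, positive semidefiniteness is immediate from summing \eqref{eq: msdp_doublesum}.
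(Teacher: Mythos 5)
Your argument is exactly the paper's: the counting identity is the same double count of ordered $(m+2)$-tuples stratified by Gram type, and the semidefinite constraint is obtained, as the paper states just before Theorem~\ref{thm: msdp_original}, by summing Musin's fixed-frame inequality~\eqref{eq: msdp_doublesum} over all frames $B$ with $G(B)=G$ and regrouping by $(\u,\v,t)$, with Proposition~\ref{prop: N} handling the degenerate tuples that force the enlarged ranges $(A')^m$ and $A'$. The one point you assert rather than verify --- that every $m$-tuple of distinct points of $X$ has a \emph{proper} Gram matrix, so that the stratification by proper $G$ is exhaustive --- is an implicit hypothesis shared with the paper (it holds in the paper's application, $m=2$ with $|\a|<1$), so your proposal matches the intended proof.
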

Note that there is one semidefinite constraint for each proper Gram matrix $G$. For example, for a spherical $s$-distance set $X$ with $-1 \notin A(X)$, there are $s$ possible $2 \times 2$ proper Gram matrices $G$, and accordingly $s$ different semidefinite constraints for the $4$-point distance distribution $N_2(G; \u, \v, t)$.

{\bf Part 3 (alternative version).} Similar to the relation of Theorem~\ref{thm: 3sdp_original} and Theorem~\ref{thm: 3sdp_alternative}, there is also an alternative version of Theorem~\ref{thm: msdp_original}. We call it the {\bf alternative multi-point semidefinite constraint}.

\begin{thm}[Alternative multi-point semidefinite constraint]\label{thm: msdp_alternative}
    For a spherical $s$-distance set $X$ in $\S^{d-1}$ with inner product set $A$,
    \[
        \sum_{G}\sum_{\u, \v \in A^m}\sum_{t \in A} N_m(G; \u, \v, t) = \frac{N!}{(N-m-2)!},
    \]
    in which $G$ is summed over all possible $m \times m$ proper Gram matrices. Let $A' = A \cup \{1\}$ and $\{\e_{\u}: \u \in (A')^m\}$ be the standard basis of $\R^{(s+1)^m}$. Then for a fixed $m \times m$ proper Gram matrix $G$,
    \[
        \mathcal{Q}^{d, m}_k(G; X) \coloneqq \sum_{\u, \v \in (A')^m}\sum_{t \in A'} N_m(G; \u, \v, t)Q^{d, m}_k(G; \u, \v, t)\e_\u\e_\v^{\T} \succeq 0 \text{ for } k \geq 0.
    \]
\end{thm}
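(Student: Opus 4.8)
The plan is to derive Theorem~\ref{thm: msdp_alternative} from the original multi-point constraint in Theorem~\ref{thm: msdp_original}, following verbatim the passage from Theorem~\ref{thm: 3sdp_original} to Theorem~\ref{thm: 3sdp_alternative}. The counting identity is word-for-word the one already established in Theorem~\ref{thm: msdp_original}, so it is inherited with no extra work, and the entire content of the proof lies in the positive semidefiniteness of $\SDP^{d, m}_k(G; X)$.

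First I would fix a proper Gram matrix $G$ and unwind the statement $\sum_{\u, \v, t} N_m(G; \u, \v, t) Y^{d, m}_{k, n}(G; \u, \v, t) \succeq 0$. Using the factorization $Y^{d, m}_{k, n} = Q^{d, m}_k\,\mathcal{B}_n(\u)\mathcal{B}_n(\v)^{\T}$, positive semidefiniteness says that for every coefficient vector $\mathbf{a}$ (indexed by the monomials of degree at most $n$ in $m$ variables),
\[
    \sum_{\u, \v \in (A')^m}\sum_{t \in A'} N_m(G; \u, \v, t)\,Q^{d, m}_k(G; \u, \v, t)\,(\mathbf{a}^{\T}\mathcal{B}_n(\u))(\mathcal{B}_n(\v)^{\T}\mathbf{a}) \geq 0.
\]
Writing $f$ for the multivariate polynomial whose coefficient vector is $\mathbf{a}$, one has $\mathbf{a}^{\T}\mathcal{B}_n(\u) = f(\u)$, so the display becomes $\sum N_m Q^{d, m}_k\,f(\u)f(\v) \geq 0$ for every $f$ of total degree at most $n$. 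Since $n$ is arbitrary, it holds for all $f \in \R[x_1, \dots, x_m]$.

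Next I would repackage this as a quadratic form. Letting $\{\e_{\u}\}$ be the standard basis of $\R^{(s+1)^m}$ and $\mathbf{f} = \sum_{\u \in (A')^m} f(\u)\e_{\u}$, the inequality reads $\mathbf{f}^{\T}\SDP^{d, m}_k(G; X)\,\mathbf{f} \geq 0$. To conclude $\SDP^{d, m}_k(G; X) \succeq 0$ it suffices to show that $\mathbf{f}$ sweeps out all of $\R^{(s+1)^m}$ as $f$ ranges over polynomials, that is, that every prescribed assignment $\u \mapsto f_{\u}$ on the finite node set $(A')^m$ is realized by some polynomial $f$.

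The one point requiring care, and really the only step that is not bookkeeping, is the solvability of this multivariate interpolation on $(A')^m$. The key structural observation is that $(A')^m$ is a tensor grid: $A'$ consists of $s+1$ distinct reals, and the nodes are its $m$-fold Cartesian product. Hence I would build the interpolant explicitly from tensor products of univariate Lagrange basis polynomials: if $\{\ell_w(x)\}_{w \in A'}$ is the univariate Lagrange basis on $A'$, then for each node $\u = (u_1, \dots, u_m)$ the product $\prod_{j=1}^m \ell_{u_j}(x_j)$ equals $1$ at $\u$ and $0$ at every other node, so $f(x_1, \dots, x_m) = \sum_{\u} f_{\u}\prod_{j=1}^m \ell_{u_j}(x_j)$ realizes any prescribed values. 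Thus $\mathbf{f}$ ranges over all of $\R^{(s+1)^m}$, which forces $\SDP^{d, m}_k(G; X) \succeq 0$ and completes the proof.
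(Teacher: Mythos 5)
Your proposal is correct and follows essentially the same route as the paper: unwind the original constraint via the factorization $Y^{d,m}_{k,n} = Q^{d,m}_k\,\mathcal{B}_n(\u)\mathcal{B}_n(\v)^{\T}$, pass to arbitrary polynomials since $n$ is unbounded, and repackage as a quadratic form in $\mathbf{f} = \sum_{\u} f(\u)\e_{\u}$. The only difference is that you justify the surjectivity of $f \mapsto \mathbf{f}$ explicitly via tensor products of univariate Lagrange basis polynomials on the grid $(A')^m$, a detail the paper simply asserts ("the possible range of $\mathbf{f}$ is as large as" the whole space); your version is the more careful one.
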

\begin{proof}
The proof is similar to the proof of Theorem~\ref{thm: 3sdp_alternative}. By Theorem~\ref{thm: msdp_original},
\[
    \sum_{\u, \v \in (A')^m}\sum_{t \in A'} N_m(G; \u, \v, t)Y^{d, m}_{k, n}(G; \u, \v, t) \succeq 0,
\]
i.e.,
\[
    \sum_{\u, \v \in (A')^m}\sum_{t \in A'} N_m(G; \u, \v, t)Q^{d, m}_k(G; \u, \v, t)\mathcal{B}_n(\u)\mathcal{B}_n(\v)^{\T} \succeq 0.
\]
Hence
\begin{equation}\label{eq: msdp_altform_1}
    \sum_{\u, \v \in (A')^m}\sum_{t \in A'} N_m(G; \u, \v, t)Q^{d, m}_k(G; \u, \v, t)f(\u)f(\v) \geq 0
\end{equation}
for all $m$-variable polynomials $f$, since the degree $n$ is arbitrary.

Let $\textstyle \mathbf{f} = \sum_{\u \in (A')^m} f(\u)\e_{\u} \in \R^{(m+1)^s}$. Then inequality~\eqref{eq: msdp_altform_1} becomes
\[
    \mathbf{f}^{\T}\Big(\sum_{\u, \v \in (A')^m}\sum_{t \in A'}N_m(G; \u, \v, t)Q^{d, m}_k(G; \u, \v, t)\e_{\u}\e_{\v}^{\T}\Big)\mathbf{f} = \mathbf{f}^{\T}\SDP^{d, m}_k(G; X)\mathbf{f} \geq 0
\]
for all possible $\mathbf{f}$. Since the possible range of $\mathbf{f}$ is as large as $\R^{(m+1)^s}$, $\SDP^{d, m}_k(G; X)$ is positive semidefinite.
\end{proof}

For a spherical $s$-distance set $X$, the matrices $\SDP^{d, m}_k(G; X)$ in the above semidefinite constraints are $(s+1)^m \times (s+1)^m$ matrices, in which every entry contains $(s+1)$ terms corresponding to $(s+1)$ values $t \in A'$. When $k = 0$, the polynomial $Q^{d, m}_k$ is constant; when $k > 0$, the polynomial $Q^{d, m}_k(G; \u, \v, t)$ vanishes for some specific input. In either case, the matrix $\SDP^{d, m}_k(G; X)$ has a cleaner format. As the following indicates, $\SDP^{d, m}_k(G; X)$ can be represented as a $(s^m+1) \times (s^m+1)$ matrix if $k = 0$, and a $s^m \times s^m$ matrix if $k \geq 1$.

\begin{cor}\label{cor: msdp_alternative}
Let $G_{(p)}$ be the $p$-th column of $G$.
\begin{enumerate}[label=(\alph*)]
    \item For $k=0$, let $\e_{\1} = \e_{G_{(1)}} + \cdots + \e_{G_{(m)}}$. Then
    \begin{align*}
        \SDP^{d, m}_0(G; X) &= N_{m-2}(G)\e_{\1}\e_{\1}^{\T} \\
        &+ \sum_{\u \in A^m}N_{m-1}(G; \u)(\e_{\1}\e_{\u}^{\T} + \e_{\u}\e_{\1}^{\T} + \e_{\u}\e_{\u}^{\T}) \\
        &+ \sum_{\u, \v \in A^m}\sum_{t \in A} N_m(G; \u, \v, t)\e_{\u}\e_{\v}^{\T}.
    \end{align*}
    \item For $k \geq 1$,
    \begin{align*}
        \SDP^{d, m}_k(G; X) &= \sum_{\u \in A^m}N_{m-1}(G; \u)Q^{d, m}_k(G; \u, \u, 1)\e_{\u}\e_{\u}^{\T} \\
        &+ \sum_{\u, \v \in A^m}\sum_{t \in A} N_m(G; \u, \v, t)Q^{d, m}_k(G; \u, \v, t)\e_{\u}\e_{\v}^{\T}.
    \end{align*}
\end{enumerate}
\end{cor}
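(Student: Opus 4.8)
The plan is to start from the definition
\[
    \SDP^{d, m}_k(G; X) = \sum_{\u, \v \in (A')^m}\sum_{t \in A'} N_m(G; \u, \v, t)\,Q^{d, m}_k(G; \u, \v, t)\,\e_\u\e_\v^{\T}
\]
supplied by Theorem~\ref{thm: msdp_alternative}, and to simplify this triple sum by partitioning the index set according to which coordinates of $\u$ and $\v$, and whether $t$, are equal to $1$. The two ingredients are Proposition~\ref{prop: N}, which records exactly when $N_m(G; \u, \v, t)$ is nonzero and the lower distribution it degenerates to, and an explicit evaluation of the multivariate Gegenbauer factor $Q^{d,m}_k$ on the degenerate configurations.

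The key computation concerns $Q^{d,m}_k$ when $\u$ is a column $G_{(p)}$ of $G$. Since $G^{-1}G_{(p)} = \e_p$, I obtain $\u G^{-1}\u^{\T} = G_{pp} = 1$ and $\u G^{-1}\v^{\T} = v_p$. For $k = 0$ the factor $Q^{d,m}_0$ is identically $1$, so no term is annihilated. For $k \geq 1$, expanding $P^{d-m}_k$ in monomials shows that $Q^{d,m}_k(G; G_{(p)}, \v, t)$ collapses to a constant multiple of $(t - v_p)^k$; but Proposition~\ref{prop: N}(b) forces $t = v_p$ on the support of $N_m$, so every such term vanishes. The symmetric statement holds for $\v = G_{(q)}$. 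This single dichotomy is what separates cases (a) and (b).

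For part (a) I enumerate the surviving configurations. Because any vector with a coordinate equal to $1$ that is not a column of $G$ forces $N_m = 0$ by Proposition~\ref{prop: N}, the effective indices are $\u, \v \in A^m \cup \{G_{(1)}, \dots, G_{(m)}\}$, split into: both $\u, \v \in A^m$ with $t \in A$ (the generic term); both in $A^m$ with $t = 1$, which by (c) requires $\u = \v$ and gives $N_{m-1}(G; \u)$; one index a column and the other in $A^m$, which by (b) fixes $t$ and gives $N_{m-1}$; and both indices columns, which by (a) fixes $t = G_{pq}$ and gives $N_{m-2}(G)$. Summing the column index against $\sum_p \e_{G_{(p)}} = \e_{\1}$ collapses the mixed and double-column cases into $\e_{\1}\e_{\u}^{\T} + \e_{\u}\e_{\1}^{\T}$ and $N_{m-2}(G)\,\e_{\1}\e_{\1}^{\T}$, reproducing the three displayed lines.

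For part (b) the same partition applies, but now every configuration with $\u$ or $\v$ a column of $G$ is killed by the vanishing of $Q^{d,m}_k$, leaving only $\u, \v \in A^m$; separating $t = 1$ (where $\u = \v$ by (c), retaining the factor $Q^{d,m}_k(G; \u, \u, 1)$) from $t \in A$ yields the two displayed lines. I expect the main obstacle to be the bookkeeping in (a): one must track the forced values $t = v_p$, $t = u_q$, and $t = G_{pq}$ so that each degenerate tuple is counted exactly once, and recognize that collecting the column basis vectors over $p$ is precisely what produces $\e_{\1}$. Care is also needed to invoke the $Q$-vanishing only on the support of $N_m$ rather than claiming it identically.
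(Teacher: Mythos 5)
Your proposal is correct and follows essentially the same route as the paper: decompose the index set of the defining triple sum via Proposition~\ref{prop: N}, use $Q^{d,m}_0 \equiv 1$ for part (a) with the column terms collecting into $\e_{\1}$, and for part (b) kill the degenerate terms by noting that when $\u = G_{(p)}$ one has $\u G^{-1}\u^{\T} = 1$ and $\u G^{-1}\v^{\T} = v_p$, so $Q^{d,m}_k$ vanishes on the forced value $t = v_p$. Your observation that $Q^{d,m}_k(G; G_{(p)}, \v, t)$ reduces to a multiple of $(t-v_p)^k$ is just a slightly more explicit rendering of the paper's remark that $Q^{d,m}_k$ is a polynomial in $t - \u G^{-1}\v^{\T}$ and $(1-\u G^{-1}\u^{\T})(1-\v G^{-1}\v^{\T})$, both of which vanish there.
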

\begin{proof}
By Proposition~\ref{prop: N}, if at least one of $\{u_1, \dots, u_m, v_1, \dots, v_m\}$ equals $1$, then there is at most one $N_m(G; \u, \v, t)$ being positive among all $t \in A'$; if $t = 1$, then $N_m(G; \u, \v, t) > 0$ only when $\u = \v$. After specifying all the special cases, the matrix $\SDP^{d, m}_k(G; X)$ becomes
\begin{align*}
    \SDP^{d, m}_k(G; X) &= \sum_{1 \leq p, q \leq m}N_{m-2}(G)Q^{d, m}_k(G; G_{(p)}, G_{(q)}, G_{pq})\e_{G_{(p)}}\e_{G_{(q)}}^{\T} \\
    &+ \sum_{1 \leq p \leq m}\sum_{\v \in A^m}N_{m-1}(G; \v)Q^{d, m}_k(G; G_{(p)}, \v, v_p)\e_{G_{(p)}}\e_{\v}^{\T} \\
    &+ \sum_{\u \in A^m}\sum_{1 \leq q \leq m}N_{m-1}(G; \u)Q^{d, m}_k(G; \u, G_{(q)}, u_q)\e_{\u}\e_{G_{(q)}}^{\T} \\
    &+ \sum_{\u \in A^m}N_{m-1}(G; \u)Q^{d, m}_k(G; \u, \u, 1)\e_{\u}\e_{\u}^{\T} \\
    &+ \sum_{\u, \v \in A^m}\sum_{t \in A} N_m(G; \u, \v, t)Q^{d, m}_k(G; \u, \v, t)\e_{\u}\e_{\v}^{\T}.
\end{align*}

The equation in (a) follows from the fact that $Q^{d, m}_0(G; \u, \v, t) \equiv 1$. For (b), we claim that all $Q^{d, m}_k(G; \u, \v, t)$ in the first three lines vanish since $Q^{d, m}_k(G; \u, \v, t)$ is a polynomial in $t - \u G^{-1}\v^{\T}$ and $(1 - \u G^{-1}\u^{\T})(1 - \v G^{-1}\v^{\T})$. For example, with input $(G; \u, \v, t) = (G; G_{(p)}, \v, v_p)$ in the second line,
\[
    t - \u G^{-1}\v^{\T} = v_p - G_{(p)}G^{-1}\v^{\T} = v_p - \e_p\v^{\T} = v_p - v_p = 0
\]
and
\[
    1 - \u G^{-1}\u^{\T} = 1 - G_{(p)}G^{-1}G_{(p)}^{\T} = 1 - G_{pp} = 0,
\]
so $Q^{d, m}_k(G; G_{(p)}, \v, v_p) = 0$.
\end{proof}

{\bf Part 4.} One can establish a semidefinite program using Theorem~\ref{thm: msdp_original} or using Theorem~\ref{thm: msdp_alternative} to bound the cardinality $N$.
\section{Switching reduction}
\label{sec: switching}

In this section, we give a simplification of the semidefinite constraints on the regime that $X$ is a spherical projection of a set of equiangular lines, which we call by {\bf switching reduction}.

The simplification is given by considering the mutual relations of different spherical projections. We say that two Gram matrices $G_1$, $G_2$ are {\bf switching equivalent}, if $G_2$ is given by changing the sign of some columns and the corresponding rows in $G_1$. The Gram matrices of different spherical projections of the same set of equiangular lines are switching equivalent: let $X = \{c_1, \dots, c_N\}$ be a spherical projection with Gram matrix $G = G(X)$. Any possible spherical projection is of the form
\[
    \{\eps_1c_1, \dots, \eps_Nc_N\}, \eps_i \in \{\pm 1\}.
\]
Let $\Lambda = \text{diag}(\eps_1, \dots, \eps_N)$ be the diagonal matrix with diagonal entries $\eps_1, \dots, \eps_N$. Then the Gram matrix $G_{\Lambda}$ of the corresponding spherical projection is
\[
    G_{\Lambda} = \begin{pmatrix}\eps_ic_i \cdot \eps_jc_j\end{pmatrix}_{1 \leq i, j \leq N} = \Lambda G \Lambda.
\]
That is, $G_{\Lambda}$ is given by changing the sign of the column $i$ and the corresponding row $i$ with $\eps_i = -1$. Then $G_{\Lambda}$ is switching equivalent to $G$.

In the following, we demonstrate the modification of the semidefinite constraints after switching reduction.

{\bf Part 1.} We can define the distance distribution $N_m(G; \u, \v, t)$ for a spherical projection $X$ of a set of equiangular lines. Meanwhile, we can never define $N_m(G; \u, \v, t)$ for the set of equiangular lines itself, since the number of $(m+2)$-tuples with a fixed Gram matrix depends on the chosen spherical projection. However, the sum of $N_m(G; \u, \v, t)$ over a switching class of the Gram matrix $(G; \u, \v, t)$ is independent of the choice of spherical projections.

We define the {\bf distance distribution of switching classes}, which do not rely on the spherical projections. We use square brackets to represent the sums over switching classes.
\begin{defn}\label{defn: Nm[]}
Suppose $G$ is an $m \times m$ proper Gram matrix and $u_i, v_j, t \in \{\pm\a\}$. Define the distance distribution of the switching class $[G; \u, \v, t]$ by
\[
    N_m[G; \u, \v, t] \coloneqq \sum_{(G'; \u', \v', t') \sim (G; \u, \v, t)} N_m(G'; \u', \v', t'),
\]
in which $\sim$ means switching equivalence. The value $N_m[G; \u, \v, t]$ is the number of $(m+2)$-tuples $(B, c, c')$ with Gram matrix switching equivalent to $(G; \u, \v, t)$. We also define
\begin{align*}
    N_{m-1}[G; \u] &= \sum_{(G'; \u') \sim (G; \u)} N_{m-1}(G'; \u'), \\
    N_{m-2}[G] &= \sum_{G' \sim G} N_{m-2}(G').
\end{align*}
\end{defn}

Algebraically, $(G'; \u', \v', t')$ is switching equivalent to $(G; \u, \v, t)$ if and only if
\[
    (G'; \u', \v', t') = \Lambda_0 (G; \u, \v, t) \Lambda_0
\]
for some $(m+2) \times (m+2)$ diagonal matrix $\Lambda_0 = \text{diag}(\eps_1, \dots, \eps_{m+2})$ with diagonal entries $\eps_1, \dots, \eps_{m+2} \in \{\pm 1\}$. Write $\Lambda = \text{diag}(\eps_1, \dots, \eps_m)$, we have
\begin{align*}
    \begin{pmatrix} G' & \u' & \v' \\
    (\u')^{\T} & 1 & t' \\
    (\v')^{\T} & t' & 1 \end{pmatrix} &= \begin{pmatrix} \Lambda & 0 & 0 \\ 0 & \eps_{m+1} & 0 \\ 0 & 0 & \eps_{m+2} \end{pmatrix} \begin{pmatrix} G & \u & \v \\
    \u^{\T} & 1 & t \\
    \v^{\T} & t & 1 \end{pmatrix} \begin{pmatrix} \Lambda & 0 & 0 \\ 0 & \eps_{m+1} & 0 \\ 0 & 0 & \eps_{m+2} \end{pmatrix} \\
    &= \begin{pmatrix}
        \Lambda G\Lambda & \eps_{m+1}\Lambda\u & \eps_{m+2}\Lambda\v \\
        \eps_{m+1}\u^{\T}\Lambda & \eps_{m+1}^2 & \eps_{m+1}\eps_{m+2}t \\
        \eps_{m+2}\u^{\T}\Lambda & \eps_{m+2}\eps_{m+1}t & \eps_{m+2}^2
    \end{pmatrix},
\end{align*}
therefore
\[
    G' = \Lambda G\Lambda,\ \u' = \eps_{m+1}\Lambda\u,\ \v' = \eps_{m+2}\Lambda\v,\ t' = \eps_{m+1}\eps_{m+2} t.
\]

\begin{prop}\label{prop: N_sum}
Let $G$ be an $m \times m$ proper Gram matrix and $u_i, v_j, t \in \{\pm\a\}$. Then
\begin{enumerate}[label=(\alph*)]
    \item $N_m[G; \u, \v, t] = \frac{1}{2}\sum_{\Lambda}\sum_{\eps_{m+1}, \eps_{m+2} \in \{\pm 1\}} N_m(\Lambda G \Lambda; \eps_{m+1}\Lambda \u, \eps_{m+2}\Lambda\v, \eps_{m+1}\eps_{m+2}t)$.
    \item $N_{m-1}[G; \u] = \frac{1}{2}\sum_{\Lambda}\sum_{\eps_{m+1} \in \{\pm 1\}} N_{m-1}(\Lambda G\Lambda; \eps_{m+1}\Lambda\u)$.
    \item $N_{m-2}[G] = \frac{1}{2}\sum_{\Lambda} N_{m-2}(\Lambda G\Lambda)$.
\end{enumerate}
In all cases, $\Lambda$ is summed over all $m \times m$ diagonal matrices with diagonal entries $\eps_1, \dots, \eps_m \in \{\pm 1\}$.
\end{prop}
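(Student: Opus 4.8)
The plan is to recognize the summations on the right-hand sides as orbit sums for the action of diagonal sign matrices and then to count fibers. Write $M = (G; \u, \v, t)$ for the $(m+2)\times(m+2)$ Gram tuple and $\Lambda_0 = \mathrm{diag}(\eps_1, \dots, \eps_{m+2})$. The computation preceding the statement shows that a tuple $(G'; \u', \v', t')$ is switching equivalent to $M$ precisely when $(G'; \u', \v', t') = \Lambda_0 M \Lambda_0$ for some such $\Lambda_0$. Since diagonal matrices commute, $\Lambda_0 \cdot M \coloneqq \Lambda_0 M \Lambda_0$ defines an action of the abelian group $\{\pm 1\}^{m+2}$, and the orbit through $M$ is exactly the switching class of $M$. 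First I would observe that once we identify $\Lambda_0 = \mathrm{diag}(\Lambda, \eps_{m+1}, \eps_{m+2})$, the double sum in (a) runs over all $2^{m+2}$ such matrices $\Lambda_0$, and that the summand $N_m(\Lambda_0 M \Lambda_0)$ depends only on the resulting tuple.

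Next I would invoke the orbit--stabilizer principle. For any $M'$ in the switching class of $M$, the set $\{\Lambda_0 : \Lambda_0 M \Lambda_0 = M'\}$ is a coset of the stabilizer $\mathrm{Stab}(M) = \{\Lambda_0 : \Lambda_0 M \Lambda_0 = M\}$, so each tuple $M'$ is produced by exactly $|\mathrm{Stab}(M)|$ choices of $\Lambda_0$. Grouping the sum by its value therefore gives
\[
    \sum_{\Lambda_0} N_m(\Lambda_0 M \Lambda_0) = |\mathrm{Stab}(M)| \sum_{M' \sim M} N_m(M') = |\mathrm{Stab}(M)| \cdot N_m[G; \u, \v, t].
\]
Thus everything reduces to showing $|\mathrm{Stab}(M)| = 2$, which is exactly what the prefactor $\tfrac{1}{2}$ anticipates.

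The one step that carries real content---and the only place the equiangular hypothesis enters---is this stabilizer computation. Reading $\Lambda_0 M \Lambda_0 = M$ entrywise, the diagonal entries (all equal to $1$) impose no condition, while an off-diagonal entry $M_{ij}$ is fixed by the conjugation iff $\eps_i \eps_j M_{ij} = M_{ij}$. Here every off-diagonal entry of $M$ is one of $G_{pq}$ ($p \neq q$), a coordinate of $\u$ or $\v$, or $t$, and each of these lies in $\{\pm\a\}$ and is hence nonzero. Consequently $\eps_i \eps_j = 1$ for every pair $i \neq j$, forcing $\eps_1 = \cdots = \eps_{m+2}$, i.e. $\Lambda_0 = \pm I$. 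Therefore $\mathrm{Stab}(M) = \{\pm I\}$ has order $2$, and combining with the displayed identity proves (a). Parts (b) and (c) then follow by the identical argument applied to the $(m+1)\times(m+1)$ tuple $(G; \u)$ and to the $m \times m$ matrix $G$: in each case all off-diagonal entries again lie in $\{\pm\a\}$ and are nonzero, so the stabilizer is $\{\pm I\}$ of order $2$ and the factor $\tfrac{1}{2}$ is exactly accounted for. I expect no genuine obstacle beyond this stabilizer count; the remainder is the bookkeeping of rewriting a sum over the group as a weighted sum over the orbit.
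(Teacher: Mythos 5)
Your argument is correct and is essentially the paper's own reasoning: the paper states this proposition without a formal proof, offering only the remark that the factor $\tfrac{1}{2}$ arises because two diagonal sign matrices produce the same Gram matrix if and only if they are opposite, which is precisely your computation that $\mathrm{Stab}(M) = \{\pm I\}$ (valid because every off-diagonal entry is $\pm\a \neq 0$). Your orbit--stabilizer formulation just makes that one-line remark rigorous, so there is nothing to add.
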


There is a multiple $1/2$ in Proposition~\ref{prop: N_sum}, since two diagonal matrices define the same Gram matrix if and only if the matrices are opposite.

{\bf Part 2.} The distance distribution $N_m[G; \u, \v, t]$ of switching classes are related to the cardinality $N$ by the following equations.
\begin{prop}\label{prop: N_sum2}
Let $\mathcal{G}$ be a collection of representatives of switching classes of $m \times m$ proper Gram matrices. Fix $u_1 = v_1 = \a$, and let $u_p, v_q, t \in \{\pm\a\}$ for $2 \leq p, q \leq m$. Then
\begin{align*}
    \sum_{G \in \mathcal{G}}\sum_{u_1 = v_1 = \a}\sum_{t \in \{\pm\a\}}N_m[G; \u, \v, t] &= \frac{N!}{(N-m-2)!}, \\
    \sum_{G \in \mathcal{G}}\sum_{u_1 = \a} N_{m-1}[G; \u] &= \frac{N!}{(N-m-1)!}, \\
    \sum_{G \in \mathcal{G}}N_{m-2}[G] &= \frac{N!}{(N-m)!}.
\end{align*}
\end{prop}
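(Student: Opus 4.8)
The plan is to prove all three identities by a single mechanism: rewrite each switching-class distribution through Proposition~\ref{prop: N_sum}, perform a change of variables on the diagonal sign matrices, and match the result against the total-count identity for the ordinary distributions already supplied by Theorem~\ref{thm: msdp_original} (together with its degenerations in Proposition~\ref{prop: N}). The three statements are the same identity for $N_m$, $N_{m-1}$, $N_{m-2}$, so it suffices to carry out the argument once and transcribe it.

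First I would treat the top identity. By Proposition~\ref{prop: N_sum}(a), each summand $N_m[G;\u,\v,t]$ equals $\tfrac12\sum_{\Lambda}\sum_{\eps_{m+1},\eps_{m+2}} N_m(\Lambda G\Lambda;\eps_{m+1}\Lambda\u,\eps_{m+2}\Lambda\v,\eps_{m+1}\eps_{m+2}t)$. Substituting this into the left-hand side and setting $G' = \Lambda G\Lambda$, $\u'=\eps_{m+1}\Lambda\u$, $\v'=\eps_{m+2}\Lambda\v$, $t'=\eps_{m+1}\eps_{m+2}t$, the outer data $(G,\u,\v,t,\Lambda,\eps_{m+1},\eps_{m+2})$ — with $G$ ranging over the representatives $\mathcal{G}$, the signs over $\{\pm1\}$, and $u_1=v_1=\a$ held fixed — gets reorganized into a sum over target Gram matrices $(G';\u',\v',t')$ with $G'$ an arbitrary proper $m\times m$ Gram matrix and $\u',\v'\in A^m$, $t'\in A$ arbitrary. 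The heart of the argument is then to count, for each fixed target, how many preimages occur.

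The key counting step exploits that $\a\neq0$: every off-diagonal entry of the Gram matrix of a spherical projection equals $\pm\a$, so the graph of nonzero off-diagonal entries is complete. Conjugation by $\Lambda_0=\text{diag}(\eps_i)$ multiplies the $(i,j)$ entry by $\eps_i\eps_j$, so a matrix is fixed only when $\eps_i=\eps_j$ for every pair, i.e. when $\Lambda_0=\pm I$; hence every switching stabilizer is exactly $\{\pm I\}$. Consequently, given a target $G'$, its class meets $\mathcal{G}$ in a unique representative $G$, and $\{\Lambda:\Lambda G\Lambda=G'\}$ is a coset of $\{\pm I_m\}$ of size $2$. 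For each such $\Lambda$, solving $\u=\eps_{m+1}\Lambda\u'$ under the constraint $u_1=\a$ pins down $\eps_{m+1}$ uniquely (since $(\Lambda\u')_1\in\{\pm\a\}$), and likewise $\eps_{m+2}$ from $v_1=\a$, which then determines $\u,\v,t$. Thus each target has exactly $2$ preimages, and the prefactor $\tfrac12$ leaves each $N_m(G';\u',\v',t')$ counted exactly once. The left-hand side therefore collapses to $\sum_{G'}\sum_{\u',\v'\in A^m}\sum_{t'\in A} N_m(G';\u',\v',t')=\tfrac{N!}{(N-m-2)!}$ by Theorem~\ref{thm: msdp_original}. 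The remaining two identities follow verbatim from Proposition~\ref{prop: N_sum}(b),(c): the same change of variables and the same stabilizer-order-$2$ cancellation reduce them to $\sum_{G',\u'}N_{m-1}(G';\u')$ and $\sum_{G'}N_{m-2}(G')$, which count all ordered $(m+1)$- and $m$-tuples of distinct elements with linearly independent frame, hence $\tfrac{N!}{(N-m-1)!}$ and $\tfrac{N!}{(N-m)!}$.

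I expect the main obstacle to be the bookkeeping in the change of variables: verifying that fixing $u_1=v_1=\a$ and restricting $G$ to $\mathcal{G}$ together select exactly one labeled preimage per pre-factor orbit, so that the $\tfrac12$ of Proposition~\ref{prop: N_sum} and the stabilizer multiplicity $2$ cancel cleanly with no parity or double-counting error. A secondary point worth stating explicitly is that these totals match the plain factorials precisely because the frames $B$ are linearly independent (their Gram matrices are proper), consistent with the convention already in force in Theorem~\ref{thm: msdp_original}.
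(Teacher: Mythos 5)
Your proof is correct and follows essentially the same route as the paper's: both are double-counting arguments showing that each ordinary term $N_m(G';\u',\v',t')$ is picked up exactly once on the left-hand side, the paper by noting that only one of the four sign choices $(\eps,\eps')$ lands in the fundamental domain $u_1=v_1=\a$, and you by cancelling the prefactor $\tfrac12$ of Proposition~\ref{prop: N_sum} against the order-$2$ switching stabilizer $\{\pm I\}$. Your version just makes the bookkeeping (unique representative in $\mathcal{G}$, two-element coset of $\Lambda$'s, signs $\eps_{m+1},\eps_{m+2}$ forced by $u_1=v_1=\a$) more explicit than the paper does.
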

\begin{proof}
The right hand side of the first equation is the sum of $N_m(G; \u, \v, t)$ over all proper $(m+2) \times (m+2)$ Gram matrices $N_m(G; \u, \v, t)$. For any $(G'; \u', \v', t')$, there is exactly one $m \times m$ Gram matrix $G = \Lambda G'\Lambda$ which is switching equivalent to $G'$. Therefore, the number $N_m(G'; \u', \v', t')$ is counted in
\[
    N_m[G; \eps\Lambda\u', \eps'\Lambda\v', \eps\eps't']
\]
for $\eps, \eps' \in \{\pm 1\}$. Only one of the four terms is counted in the left hand side since $u_1$ and $v_1$ are restricted to $+\a$. Therefore, the both sides are the same. By similar arguments the other two equations are true.
\end{proof}

There are several duplications of values in the distance distribution $N_m(G; \u, \v, t)$, as
\[
    N_m(G; \u, \v, t) = N_m(G'; \u', \v', t')
\]
if the matrices $(G; \u, \v, t)$, $(G'; \u', \v', t')$ are identical up to permutations. We extend this fact to the duplication of values in the distance distribution of switching classes.
\begin{prop}\label{prop: N_permutation}
If $(G'; \u', \v', t')$ is switching equivalent to $P(G; \u, \v, t)P^{\T}$ for some permutation matrix $P$, then $N_m[G'; \u', \v', t'] = N_m[G; \u, \v, t]$. Similarly, if $(G'; \u')$ is switching equivalent to $P(G; \u)P^T$ for some permutation matrix $P$, then $N_{m-1}[G'; \u'] = N_{m-1}[G; \u]$; if $G'$ is switching equivalent to $PGP^T$ for some permutation matrix $P$, then $N_{m-2}[G'] = N_{m-2}[G]$.
\end{prop}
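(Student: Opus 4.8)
The plan is to read off from Definition~\ref{defn: Nm[]} that $N_m[G; \u, \v, t]$ is exactly the number of ordered $(m+2)$-tuples $(B, c, c') \in X^{m+2}$ whose full $(m+2) \times (m+2)$ Gram matrix is switching equivalent to $M \coloneqq (G; \u, \v, t)$. Phrased this way, $N_m[\,\cdot\,]$ depends only on the switching class of $M$, so invariance under switching is automatic and it remains to establish invariance under the permutation action $M \mapsto P M P^{\T}$. I would handle all three assertions uniformly, since the proofs for $N_{m-1}$ and $N_{m-2}$ are the same argument applied to $(m+1)$-tuples and $m$-tuples.

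First I would set up the permutation as a bijection. A permutation matrix $P$ of order $m+2$ acts on a tuple $(b_1, \dots, b_m, c, c')$ by reordering its entries, giving a bijection of $X^{m+2}$ onto itself whose inverse is the reverse reordering; under it the full Gram matrix of a tuple transforms by the conjugation $L \mapsto P L P^{\T}$. The key algebraic point is then that switching equivalence commutes with this conjugation: if $L = \Lambda_0 M \Lambda_0$ for a sign-diagonal matrix $\Lambda_0 = \text{diag}(\eps_1, \dots, \eps_{m+2})$, then $P L P^{\T} = (P \Lambda_0 P^{\T})(P M P^{\T})(P \Lambda_0 P^{\T})$, and $P \Lambda_0 P^{\T}$ is again diagonal with entries in $\{\pm 1\}$, since conjugating a diagonal matrix by a permutation merely permutes its diagonal entries. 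Hence $L$ is switching equivalent to $M$ if and only if $P L P^{\T}$ is switching equivalent to $P M P^{\T}$.

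Combining the two, the permutation action restricts to a bijection between the set of tuples whose Gram matrix is switching equivalent to $M$ and the set of tuples whose Gram matrix is switching equivalent to $P M P^{\T}$. Counting both sides gives $N_m[P M P^{\T}] = N_m[M]$; and since any $(G'; \u', \v', t')$ that is switching equivalent to $P M P^{\T}$ lies in the same switching class, we conclude $N_m[G'; \u', \v', t'] = N_m[G; \u, \v, t]$, as claimed.

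I do not expect a real obstacle, as the whole statement reduces to a bijection between two finite sets that are then counted. The only step needing care is the commutation of the two group actions, namely verifying that $P \Lambda_0 P^{\T}$ stays sign-diagonal, which is precisely what lets switching and permuting be interleaved freely. A side benefit of working directly with the switching-class count of the full Gram matrix is that one never has to check positive-definiteness of the intermediate $m \times m$ blocks that a decomposition-based argument would produce, so no properness issues arise.
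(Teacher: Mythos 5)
Your proof is correct. The paper states Proposition~\ref{prop: N_permutation} without proof, treating it as an immediate extension of the permutation-invariance of $N_m(G;\u,\v,t)$ to switching classes; your argument --- identifying $N_m[\,\cdot\,]$ with the count of tuples whose full Gram matrix lies in a given switching class, and observing that reordering tuples by $P$ is a bijection under which $P\Lambda_0 P^{\T}$ remains sign-diagonal, so switching and permuting commute --- is exactly the reasoning the paper implicitly relies on, correctly formalized.
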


\begin{eg} \label{eg: switching}
For the two-point distance distribution, the two $2 \times 2$ proper Gram matrices are switching equivalent to each other, so $N_0[G] = N(N-1)$ for either $G$. For the three-point distance distribution, note that any $3 \times 3$ proper Gram matrix $(1; u, v, t)$ is switching equivalent to either $\begin{pmatrix}1 & \a & \a \\ \a & 1 & \a \\ \a & \a & 1\end{pmatrix}$ or $\begin{pmatrix}1 & \a & \a \\ \a & 1 & -\a \\ \a & -\a & 1\end{pmatrix}$, so there are $2$ switching classes in total. 
Denote
\begin{align*}
    y_1 &\coloneqq N_1[1; \a, \a, \a] = y(\a, \a, \a) + y(\a, -\a, -\a) + y(-\a, \a, -\a) + y(-\a, -\a, \a), \\
    y_2 &\coloneqq N_1[1; \a, \a, -\a] = y(\a, \a, -\a) + y(\a, -\a, \a) + y(-\a, \a, \a) + y(-\a, -\a, -\a).
\end{align*}
By Proposition~\ref{prop: N_sum2}, $y_1 + y_2 = N!/(N-3)! = N(N-1)(N-2)$.
For the four-point distance distribution $N_2[G; \u, \v, t]$, note that any $4 \times 4$ proper Gram matrix $(G'; \u', \v', t')$ is switching equivalent to a Gram matrix of the form
\[
    \begin{pmatrix}
        G & \u & \v \\
        \u^{\T} & 1 & t \\
        \v^{\T} & t & 1
    \end{pmatrix} = \begin{pmatrix}
        1 & \a & \a & \a \\
        \a & 1 & u_2 & v_2 \\
        \a & u_2 & 1 & t \\
        \a & v_2 & t & 1
    \end{pmatrix},\ u_2, v_2, t \in \{\pm\a\},
\]
so there are $2^3 = 8$ switching classes in total. Furthermore, the six Gram matrices
\begin{align*}
    \begin{pmatrix}
        1 & \a & \a & \a \\
        \a & 1 & \a & \a \\
        \a & \a & 1 & -\a \\
        \a & \a & -\a & 1
    \end{pmatrix}, \begin{pmatrix}
        1 & \a & \a & \a \\
        \a & 1 & \a & -\a \\
        \a & \a & 1 & \a \\
        \a & -\a & \a & 1
    \end{pmatrix}, \begin{pmatrix}
        1 & \a & \a & \a \\
        \a & 1 & \a & -\a \\
        \a & \a & 1 & -\a \\
        \a & -\a & -\a & 1
    \end{pmatrix}, \\
    \begin{pmatrix}
        1 & \a & \a & \a \\
        \a & 1 & -\a & \a \\
        \a & -\a & 1 & \a \\
        \a & \a & \a & 1
    \end{pmatrix}, \begin{pmatrix}
        1 & \a & \a & \a \\
        \a & 1 & -\a & \a \\
        \a & -\a & 1 & -\a \\
        \a & \a & -\a & 1
    \end{pmatrix},
    \begin{pmatrix}
        1 & \a & \a & \a \\
        \a & 1 & -\a & -\a \\
        \a & -\a & 1 & \a \\
        \a & -\a & \a & 1
    \end{pmatrix}
\end{align*}
are identical up to switching and permutations. Let $\u_1 = \begin{pmatrix}\a & \a\end{pmatrix}^{\T}$ and $\u_2 = \begin{pmatrix}\a & -\a\end{pmatrix}^{\T}$. By Proposition~\ref{prop: N_permutation},
\begin{align*}
    & N_2[G; \u_1, \u_1, -\a] = N_2[G; \u_1, \u_2, \a] = N_2[G; \u_1, \u_2, -\a] \\
    =& N_2[G; \u_2, \u_1, \a] = N_2[G; \u_2, \u_1, -\a] = N_2[G; \u_2, \u_2, \a].
\end{align*}
Denote $z_1 \coloneqq N_2[G; \u_1, \u_1, \a]$, $z_2 \coloneqq N_2[G; \u_1, \u_1, -\a]$ and $z_3 \coloneqq N_2[G; \u_2, \u_2, -\a]$. By Proposition~\ref{prop: N_sum2}, we have $z_1 + 6z_2 + z_3 = N(N-1)(N-2)(N-3)$.
\end{eg}

\begin{rmk}
In general, there are $2^{\binom{m+2}{2}}$ different $(m+2) \times (m+2)$ Gram matrices and $2^{\binom{m+1}{2}}$ switching classes. In the spirit of Proposition~\ref{prop: N_permutation}, there are at most $a(m+2)$ different values of the distance distribution $N_m[G; \u, \v, t]$ of switching classes, where $a(m)$ is the number of Seidel matrices of order $m$. The first seven terms for $a(m)$ is $1, 1, 2, 3, 7, 16, 54$; see Sequence A002854 in The On-Line Encyclopedia of Integer Sequences~\cite{oeis} for detail. Therefore, the number of variables of a semidefinite program developed by $(m+2)$-point semidefinite constraints is decreased to $a(m+2)$ after switching reduction.
\end{rmk}

{\bf Part 3.} The reduction in the alternative semidefinite constraints $\SDP^{d, m}_k(G; X)$ is due to the {\bf switching property} of the polynomial $Q^{d, m}_k(G; \u, \v, t)$.
\begin{prop}\label{prop: Q_switching}
If $(G'; \u', \v', t)$ is switching equivalent to $(G; \u, \v, t)$ by the relation
\[
    G' = \Lambda G\Lambda,\ \u' = \eps_{m+1}\Lambda\u,\ \v' = \eps_{m+2}\Lambda\v,\ t' = \eps_{m+1}\eps_{m+2} t,
\]
then $Q^{d, m}_k(G'; \u', \v', t') = (\eps_{m+1}\eps_{m+2})^kQ^{d, m}_k(G; \u, \v, t)$ for any $k \geq 0$.
\end{prop}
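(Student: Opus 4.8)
The plan is to track how each of the three scalar quantities built into the definition of $Q^{d,m}_k$ transforms under switching, and then reduce the whole statement to the parity of the univariate Gegenbauer polynomial $P^{d-m}_k$. The only structural facts I need about the sign matrix $\Lambda$ are $\Lambda^{\T}=\Lambda$ and $\Lambda^2 = I$ (each diagonal entry is $\pm1$); in particular these give $(G')^{-1} = (\Lambda G\Lambda)^{-1} = \Lambda G^{-1}\Lambda$.

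First I would record the two ``diagonal'' quadratic forms. Substituting $\u' = \eps_{m+1}\Lambda\u$ and $(G')^{-1}=\Lambda G^{-1}\Lambda$ and cancelling the $\Lambda$'s in pairs,
\[
    \u'(G')^{-1}(\u')^{\T} = \eps_{m+1}^2\,\u\Lambda\,(\Lambda G^{-1}\Lambda)\,\Lambda\u^{\T} = \u G^{-1}\u^{\T},
\]
and likewise $\v'(G')^{-1}(\v')^{\T} = \v G^{-1}\v^{\T}$. Hence the factors $1-\u'(G')^{-1}(\u')^{\T}$ and $1-\v'(G')^{-1}(\v')^{\T}$ are left \emph{literally} unchanged, so the prefactor $(1-\u G^{-1}\u^{\T})^{k/2}(1-\v G^{-1}\v^{\T})^{k/2}$ and the square root in the denominator of the argument of $P^{d-m}_k$ are identical on both sides. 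This is precisely what makes the half-integer powers and the square root harmless: there is no sign or branch ambiguity to resolve, because these quantities are equal, not merely equal up to sign (the polynomial nature of $Q^{d,m}_k$ guarantees the final expression is well defined regardless).

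Next I would treat the cross term. The same cancellation, now with one $\eps_{m+1}$ and one $\eps_{m+2}$, gives
\[
    \u'(G')^{-1}(\v')^{\T} = \eps_{m+1}\eps_{m+2}\,\u\Lambda\,(\Lambda G^{-1}\Lambda)\,\Lambda\v^{\T} = \eps_{m+1}\eps_{m+2}\,\u G^{-1}\v^{\T},
\]
and combining this with $t' = \eps_{m+1}\eps_{m+2}t$ shows the numerator satisfies $t' - \u'(G')^{-1}(\v')^{\T} = \eps_{m+1}\eps_{m+2}\,(t-\u G^{-1}\v^{\T})$. Thus the entire argument of $P^{d-m}_k$ is multiplied by the sign $\eps_{m+1}\eps_{m+2}$ while every other ingredient is fixed.

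Finally I would invoke the parity of the Gegenbauer polynomials. From the three-term recurrence defining $P^{d-m}_k$ one checks by induction that $P^{d-m}_k$ involves only monomials of the same parity as $k$, so $P^{d-m}_k(-x)=(-1)^kP^{d-m}_k(x)$, i.e. $P^{d-m}_k(\eps x)=\eps^kP^{d-m}_k(x)$ for $\eps\in\{\pm1\}$. Applying this with $\eps=\eps_{m+1}\eps_{m+2}$ pulls the factor $(\eps_{m+1}\eps_{m+2})^k$ out in front and recovers exactly $Q^{d,m}_k(G;\u,\v,t)$, which is the claim. I do not anticipate a genuine obstacle: the argument is routine symmetric-matrix bookkeeping, and the single non-computational ingredient is the Gegenbauer parity, itself an easy induction on $k$ using the recurrence in the definition of $P^d_k$.
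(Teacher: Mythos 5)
Your proof is correct and follows essentially the same route as the paper's: both arguments show that the diagonal quadratic forms $\u G^{-1}\u^{\T}$ and $\v G^{-1}\v^{\T}$ are invariant under switching while the cross term $t-\u G^{-1}\v^{\T}$ picks up the sign $\eps_{m+1}\eps_{m+2}$, and then conclude via the parity of $P^{d-m}_k$. Your write-up is merely more explicit about the intermediate identity $(G')^{-1}=\Lambda G^{-1}\Lambda$ and the induction behind the parity of the Gegenbauer polynomials.
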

\begin{proof}
Since
\[
    t' - \u'(G')^{-1}(\v')^{\T} = \eps_{m+1}\eps_{m+2}(t - \u G^{-1}\v^{\T})
\]
and
\[
    (1 - \u'(G')^{-1}(\u')^{\T})(1 - \v'(G')^{-1}(\v')^{\T}) = (1 - \u G^{-1}\u^{\T})(1 - \v G^{-1}\v^{\T}),
\]
the only difference in $Q^{d, m}_k(G; \u, \v, t)$ and $Q^{d, m}_k(G'; \u', \v', t')$ is the input of $P^{d-m}_k$ in a multiple $\eps_{m+1}\eps_{m+2}$. Meanwhile, $P^{d-m}_k$ is an odd function when $k$ is odd and an even function when $k$ is even, so the two values differ by a multiple of $(\eps_{m+1}\eps_{m+2})^k$.
\end{proof}

Due to Proposition~\ref{prop: Q_switching}, we can safely sum over switching classes in the alternative semidefinite constraints $\SDP^{d, m}_k(G; X)$, since the coefficients $Q^{d, m}_k$ for inputs $(G; \u, \v, t)$ in the same switching class are identical. We also use the square bracket to represent the sum of the matrices over a switching class.

\begin{thm}\label{thm: msdp_reduction}
For a spherical projection $X$ of a set of equiangular lines in $\R^d$ with inner product set $A = \{\pm\a\}$, fix $u_1, v_1 = \a$, and let $u_p, v_q, t \in \{\pm\a\}$ for $2 \leq p, q \leq m$. For a fixed $m \times m$ proper Gram matrix $G$,
\begin{align*}
    \SDP^{d, m}_0[G; X] &\coloneqq N_{m-2}[G]\e_{\1}\e_{\1}^{\T} \\
    &+ \sum_{u_1 = \a}N_{m-1}[G; \u](\e_{\1}\e_{\u} + \e_{\u}\e_{\1}^{\T} + \e_{\u}\e_{\u}^{\T}) \\
    &+ \sum_{u_1 = v_1 = \a}\left(N_m[G; \u, \v, \a] + N_m[G; \u, \v, -\a]\right)\e_{\u}\e_{\v}^{\T} \succeq 0.
\end{align*}
For $k \geq 1$,
\begin{align*}
    \SDP^{d, m}_k[G; X] &\coloneqq \sum_{u_1 = \a} N_{m-1}[G; \u]Q^{d, m}_k(G; \u, \u, 1)\e_{\u}\e_{\u}^{\T} \\
    &+ \sum_{u_1 = v_1 = \a}\left(N_m[G; \u, \v, \a]Q^{d, m}_k(G; \u, \v, \a) + N_m[G; \u, \v, -\a]Q^{d, m}_k(G; \u, \v, -\a)\right)\e_{\u}\e_{\v}^{\T} \succeq 0.
\end{align*}
\end{thm}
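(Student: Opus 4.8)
The plan is to realize $\SDP^{d,m}_k[G;X]$ as a nonnegative combination of congruence transforms of the matrices $\SDP^{d,m}_k(\Lambda G\Lambda; X)$, each of which is positive semidefinite by Corollary~\ref{cor: msdp_alternative}, and then to invoke the fact that positive semidefiniteness is preserved under congruence $M\mapsto P^{\T}MP$ and under addition. Throughout I would fix one spherical projection $X$: although the individual distance distributions $N_m(\Lambda G\Lambda;\u,\v,t)$ depend on this choice, their switching-class sums $N_m[G;\u,\v,t]$ do not, so the matrix produced at the end is projection-independent, as required.

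First I would set up the folding map. The target $\SDP^{d,m}_k[G;X]$ is indexed by $\{\u : u_1 = \a\}$ (together with $\e_{\1}$ when $k=0$), whereas each $\SDP^{d,m}_k(\Lambda G\Lambda; X)$ is indexed by the full set $(A')^m$. For each diagonal sign matrix $\Lambda$ I define a folding matrix $P_\Lambda$ from the folded index set into the full one by declaring the entry relating the folded index $\u$ to the full index $\eps\Lambda\u$ to be $\eps^{k}$, for $\eps\in\{\pm1\}$. The exponent $k$ is chosen precisely so that, in $P_\Lambda^{\T}\SDP^{d,m}_k(\Lambda G\Lambda; X)P_\Lambda$, the signs $\eps_{m+1}^{k}\eps_{m+2}^{k}$ supplied by $P_\Lambda$ cancel against the factor $(\eps_{m+1}\eps_{m+2})^{k}$ produced when Proposition~\ref{prop: Q_switching} rewrites $Q^{d,m}_k(\Lambda G\Lambda;\eps_{m+1}\Lambda\u,\eps_{m+2}\Lambda\v,\eps_{m+1}\eps_{m+2}t)$ as the representative value $Q^{d,m}_k(G;\u,\v,t)$. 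Thus each $(\u,\v)$ entry of the transform collects terms $N_m(\Lambda G\Lambda;\eps_{m+1}\Lambda\u,\eps_{m+2}\Lambda\v,\eps_{m+1}\eps_{m+2}t)$ weighted by the clean coefficient $Q^{d,m}_k(G;\u,\v,t)$.

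Next I would sum over $\Lambda$. By Proposition~\ref{prop: N_sum}, the quantity $\frac{1}{2}\sum_{\Lambda}\sum_{\eps_{m+1},\eps_{m+2}} N_m(\Lambda G\Lambda;\eps_{m+1}\Lambda\u,\eps_{m+2}\Lambda\v,\eps_{m+1}\eps_{m+2}t)$ is exactly $N_m[G;\u,\v,t]$, and the analogous identities for $N_{m-1}$ and $N_{m-2}$ fold the degenerate rows and columns isolated in Corollary~\ref{cor: msdp_alternative} (the $t=1$ contributions $N_{m-1}[G;\u]Q^{d,m}_k(G;\u,\u,1)$ on the diagonal, and for $k=0$ the $\e_{\1}$-terms built from $N_{m-2}[G]$ and $N_{m-1}[G;\u]$). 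Assembling these, I claim
\[
    \frac{1}{2}\sum_{\Lambda} P_\Lambda^{\T}\,\SDP^{d,m}_k(\Lambda G\Lambda; X)\,P_\Lambda = \SDP^{d,m}_k[G;X],
\]
where the factor $\frac{1}{2}$ matches Proposition~\ref{prop: N_sum} and accounts for $\Lambda$ and $-\Lambda$ defining the same Gram matrix. Since every summand on the left is a congruence of a positive semidefinite matrix, the right-hand side is positive semidefinite, which is the assertion of the theorem. For $k=0$ all signs $\eps^{k}$ equal $+1$, so $P_\Lambda$ is a genuine $0/1$ fold and the special index is transported using $G'_{(p)} = \eps_p\Lambda G_{(p)}$, so that $\e_{\1}$ of $\Lambda G\Lambda$ maps back to $\e_{\1}$.

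I expect the main obstacle to be the bookkeeping in this final identity rather than any conceptual difficulty. One must verify that, after applying $P_\Lambda$ and summing, the off-diagonal $N_m$ entries, the diagonal contributions (which receive both an $N_{m-1}$ term and the $\u=\v$ part of the $N_m$ term), and the $\e_{\1}$-terms all land in exactly the positions and with exactly the multiplicities dictated by Corollary~\ref{cor: msdp_alternative}. The delicate point is that two independent sign sources, the exponent $k$ built into $P_\Lambda$ and the odd/even parity of $P^{d-m}_k$ underlying Proposition~\ref{prop: Q_switching}, must be tracked simultaneously; checking that they cancel uniformly in $k$ (to $(\eps_{m+1}\eps_{m+2})^{2k}=1$) is the crux, but once $P_\Lambda$ is defined with the exponent $k$ as above, the remainder is the routine term-matching sketched here.
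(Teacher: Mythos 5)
Your proposal is correct and follows essentially the same route as the paper: the folding matrix $P_\Lambda$ with entries $\eps^k$ is exactly the paper's linear transformation $S$ (which sends $\e_{\eps\Lambda\u}$ to $\eps^k\e_{\u}$ for $u_1=\a$), the cancellation $(\eps_{m+1}\eps_{m+2})^{2k}=1$ against Proposition~\ref{prop: Q_switching} is the same computation, and the identity $\frac{1}{2}\sum_{\Lambda}P_\Lambda^{\T}\,\SDP^{d,m}_k(\Lambda G\Lambda;X)\,P_\Lambda=\SDP^{d,m}_k[G;X]$ is precisely the paper's conclusion that summing the conjugated constraints over $\Lambda$ yields $2\,\SDP^{d,m}_k[G;X]\succeq 0$.
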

\begin{proof}
We first prove the statement for $k=0$. By Corollary~\ref{cor: msdp_alternative}(a), for any $G' = \Lambda G \Lambda$ where $\Lambda$ is a diagonal matrix with diagonal entries $\pm 1$,
\begin{align}
    \SDP^{d, m}_0(G'; X) &= N_{m-2}(\Lambda G\Lambda)\e_{\1}\e_{\1}^{\T} + \sum_{\u \in A^m}N_{m-1}(\Lambda G\Lambda; \Lambda\u)(\e_{\1}\e_{\Lambda\u}^{\T} + \e_{\Lambda\u}\e_{\1}^{\T} + \e_{\Lambda\u}\e_{\Lambda\u}^{\T}) \nonumber \\
    &+ \sum_{\u, \v \in A^m}\left(N_m(\Lambda G\Lambda; \Lambda\u, \Lambda\v, \a) + N_m(\Lambda G\Lambda; \Lambda\u, \Lambda\v, -\a)\right)\e_{\Lambda\u}\e_{\Lambda\v}^{\T} \succeq 0. \label{eq: msdp_reduction_0}
\end{align}
Let $S$ be the linear transformation defined by $S(\e_{\1}) = \e_{\1}$, $S(\e_{\Lambda\u}) = \e_{\u}$ for $u_1 = \a$, and $S(\e_{\Lambda\u}) = \e_{-\u}$ for $u_1 = -\a$. By conjugating $S$ on \eqref{eq: msdp_reduction_0}, we have
\begin{align*}
    & N_{m-2}(\Lambda G\Lambda)\e_{\1}\e_{\1}^{\T} + \sum_{\u \in A^m}N_{m-1}(\Lambda G\Lambda; \Lambda\u)(\e_{\1}(S\e_{\Lambda\u})^{\T} + (S\e_{\Lambda\u})\e_{\1}^{\T} + (S\e_{\Lambda\u})(S\e_{\Lambda\u})^{\T}) \\
    +& \sum_{\u, \v \in A^m}\left(N_m(\Lambda G\Lambda; \Lambda\a_{\u}, \Lambda\a_{\v}, \a) + N_m(\Lambda G\Lambda; \Lambda\a_{\u}, \Lambda\a_{\v}, -\a)\right)(S\e_{\Lambda\u})(S\e_{\Lambda\v})^{\T} \succeq 0,
\end{align*}
therefore
\begin{align}
    & N_{m-2}(\Lambda G\Lambda)\e_{\1}\e_{\1}^{\T} + \sum_{u_1 = \a}\sum_{\eps \in \{\pm 1\}}N_{m-1}(\Lambda G\Lambda; \eps\Lambda\u)(\e_{\1}\e_{\u}^{\T} + \e_{\u}\e_{\1}^{\T} + \e_{\u}\e_{\u}^{\T}) \nonumber \\
    +& \sum_{u_1 = v_1 = \a}\sum_{\eps, \eps' \in \{\pm 1\}}\left(N_m(\Lambda G\Lambda; \eps\Lambda\u, \eps'\Lambda\v, \eps\eps'\a) + N_m(\Lambda G\Lambda; \eps\Lambda\u, \eps'\Lambda\v, -\eps\eps'\a)\right)\e_{\u}\e_{\v}^{\T} \succeq 0. \label{eq: msdp_reduction_1}
\end{align}
Summing over all $\Lambda$ in \eqref{eq: msdp_reduction_1} gives $2\SDP^{d, m}_0[G; X] \succeq 0$.

Now we fix $k \geq 1$. By Corollary~\ref{cor: msdp_alternative}(b), for any $G' = \Lambda G \Lambda$,
\begin{align}
    & \SDP^{d, m}_k(G'; X) = \sum_{\u \in A^m}N_{m-1}(\Lambda G\Lambda; \Lambda\u)Q^{d, m}_k(\Lambda G\Lambda; \Lambda\u, \Lambda\u, 1)\e_{\Lambda\u}\e_{\Lambda\u}^{\T} \nonumber \\
    +& \sum_{\u, \v \in A^m}\Big(N_m(\Lambda G\Lambda; \Lambda\u, \Lambda\v, \a)Q^{d, m}_k(\Lambda G\Lambda; \Lambda\u, \Lambda\v, \a) + N_m(\Lambda G\Lambda; \Lambda\u, \Lambda\v, -\a)Q^{d, m}_k(\Lambda G\Lambda; \Lambda\u, \Lambda\v, -\a)\Big)\e_{\Lambda\u}\e_{\Lambda\v}^{\T} \succeq 0. \label{eq: msdp_reduction_2}
\end{align}
Let $S$ be the linear transformation defined by $S(\e_{\Lambda\u}) = \e_{\u}$ for $u_1 = \a$, and $S(\e_{\Lambda\u}) = (-1)^k\e_{-\u}$ for $u_1 = -\a$. By conjugating $S$ on \eqref{eq: msdp_reduction_2} and applying Proposition~\ref{prop: Q_switching}, the first term in~\eqref{eq: msdp_reduction_2} becomes
\begin{align*}
    & \sum_{u_1 = \a} \Big(N_{m-1}(\Lambda G\Lambda; \Lambda\u)Q^{d, m}_k(\Lambda G\Lambda; \Lambda\u, \Lambda\u, 1) + (-1)^{2k}N_{m-1}(\Lambda G\Lambda; -\Lambda\u)Q^{d, m}_k(\Lambda G\Lambda; -\Lambda\u, -\Lambda\u, 1)\Big)\e_{\u}\e_{\u}^{\T} \\
    =& \sum_{u_1 = \a} \Big(N_{m-1}(\Lambda G\Lambda; \Lambda\u) + N_{m-1}(\Lambda G\Lambda; -\Lambda\u)\Big)Q^{d, m}_k(G; \u, \u, 1)\e_{\u}\e_{\u}^{\T}
\end{align*}
and the second term becomes
\begin{align*}
    &\sum_{u_1 = v_1 = \a} \Bigg[ \left(\sum_{\eps, \eps' \in \{\pm 1\}} N_m(\Lambda G\Lambda; \eps\Lambda\u, \eps\Lambda\v, \eps\eps'\a)\right)Q^{d, m}_k(G; \u, \v, \a) \\
    &= \left(\sum_{\eps, \eps' \in \{\pm 1\}} N_m(\Lambda G\Lambda; \eps\Lambda\u, \eps\Lambda\v, -\eps\eps'\a)\right)Q^{d, m}_k(G; \u, \v, -\a) \Bigg] \e_{\u}\e_{\v}^{\T}.
\end{align*}
Therefore, summing over all $\Lambda$ in \eqref{eq: msdp_reduction_2} gives $2\SDP^{d, m}_k[G; X] \succeq 0$.
\end{proof}

\begin{rmk}
After switching reduction, the matrices $\SDP^{d, m}_k[G; X]$ have order $2^{m-1}+1$ if $k=0$, and order $2^{m-1}$ if $k \geq 1$. The order is halved in contrast to Corollary~\ref{cor: msdp_alternative}. Furthermore, if $G'$ is switching equivalent to $PGP^{\T}$ for some permutation matrix $P$, then $\SDP^{d, m}_k[G; X]$ and $\SDP^{d, m}_k[G'; X]$ are identical up to relabelling of basis. Therefore, the number of semidefinite constraints of a semidefinite program developed by $(m+2)$-point semidefinite constraints is decreased to $a(m)$ after switching reduction.
\end{rmk}
\section{Four-point semidefinite bound for equiangular lines}
\label{sec: 4sdp}

\citet{yu2017new} proved the three-point semidefinite bound, which states that $N_{1/a}(d) \leq (a^2-1)(a^2-2)/2$ for $d \leq D_3(a) = 3a^2-16$. The bound was proved by using the three-point semidefinite programming method based on Theorem~\ref{thm: 3sdp_original}. Furthermore,~\citet{glazyrin2018upper} proved that the constructions attaining this bound must lie in an $(a^2-2)$-dimensional subspace. We prove the generalizations of these results, namely, Theorem~\ref{thm: 4sdp_main_mirror} and Theorem~\ref{thm: maximum_mirror}, in this section.

\subsection{Proof of Theorem~\ref{thm: 4sdp_main_mirror}}
We state Theorem~\ref{thm: 4sdp_main_mirror} as follows.
\begin{thm}\label{thm: 4sdp_main}
Let $a \geq 3$ be an odd integer. Suppose $d \leq \lfloor D_4(a)\rfloor$, where $D_4(a)$ is the maximum root of the equation
\begin{align*}
    g_a(x) &\coloneqq (-7a^{14} - 122a^{12} - 342a^{10} + 2776a^8 + 7049a^6 - 17238a^4 - 22932a^2 - 6048)x^4 \\
    &+ 12(4a^{16} + 21a^{14} - 227a^{12} - 46a^{10} + 3338a^8 - 7643a^6 + 2693a^4 + 7140a^2 + 864)x^3 \\
    &-9a^2(11a^{16} - 94a^{14} - 25a^{12} + 3068a^{10} - 13951a^8 + 25882a^6 - 15987a^4 - 9608a^2 + 14800)x^2 \\
    &+ 54a^2(a-2)^2(a-1)^2(a+1)^2(a+2)^2(a^2+1)(a^4 - a^3 - 5a^2 + 3a + 10)(a^4 + a^3 - 5a^2 - 3a + 10)x \\ 
    &-81a^2(a - 2)^4(a - 1)^4(a + 1)^4(a + 2)^4 \\
    &= 0.
\end{align*}
Then
\[
    N_{1/a}(d) \leq \frac{1}{2}(a^2-1)(a^2-2).
\]
Furthermore, when $a$ tends to infinity,
\[
    D_4(a) = 3a^2 + \frac{12}{\sqrt{5}}a - \frac{948}{25} + O(a^{-1}).
\]
\end{thm}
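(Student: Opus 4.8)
The plan is to feed a spherical projection $X$ of a maximal set of equiangular lines into the switching-reduced four-point constraints of Theorem~\ref{thm: msdp_reduction} with $m=2$ and to exhibit an explicit dual combination of them that forces $N=|X|\leq\tfrac12(a^2-1)(a^2-2)$ whenever $d$ stays below the threshold. First I would specialize to the single switching class of $2\times2$ proper Gram matrices, represented by $G=\begin{smallpmatrix}1&\a\\\a&1\end{smallpmatrix}$, and fix $u_1=v_1=\a$, so the admissible vectors are $\u_1=(\a,\a)^{\T}$ and $\u_2=(\a,-\a)^{\T}$ as in Example~\ref{eg: switching}. Evaluating the scalars $\u_iG^{-1}\u_j^{\T}$ in closed form — for instance $\u_1G^{-1}\u_1^{\T}=2\a^2/(1+\a)$ and $\u_2G^{-1}\u_2^{\T}=2\a^2/(1-\a)$ — and substituting them into $Q^{d,2}_k$ lets me write each $\SDP^{d,2}_k[G;X]$ explicitly: a $3\times3$ matrix for $k=0$ and $2\times2$ matrices for $k\geq1$, whose entries are linear in the distribution variables $N_0[G]$, $N_1[G;\u_i]$ and $z_1,z_2,z_3$ with coefficients rational in $\a$ and carrying the Gegenbauer value $P^{d-2}_k$.

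Next I would adjoin the linear relations of Proposition~\ref{prop: N_sum2}, in particular $z_1+6z_2+z_3=N(N-1)(N-2)(N-3)$ together with the analogous three- and two-point identities, and the two-point Gegenbauer inequality of Theorem~\ref{thm: 2lp}. With $d$ and $a$ treated as parameters and the distribution variables normalized by $N$, the simultaneous feasibility of all these constraints becomes a fixed-size semidefinite feasibility problem, and the claimed bound is equivalent to its infeasibility once $N>\tfrac12(a^2-1)(a^2-2)$.

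The crux is to construct the dual certificate: nonnegative scalars for the scalar inequalities and positive-semidefinite multiplier matrices $M_k$, paired through $\langle M_k,\SDP^{d,2}_k[G;X]\rangle\geq0$ for a small set of orders $k$, chosen so that the weighted sum of all constraints collapses — after eliminating $z_1,z_2,z_3$ and the conditional counts $N_1[G;\u_i]$ via the linear relations — to a single scalar inequality $\lambda\,\bigl(N-\tfrac12(a^2-1)(a^2-2)\bigr)\leq0$ with $\lambda>0$. Since each $2\times2$ block enters only through determinant-type conditions, the matrices $M_k$ carry few free entries, and I expect the certificate to remain valid exactly while one explicit polynomial in $d$ keeps the correct sign; identifying that polynomial as $g_a(d)$ and its largest real root as $D_4(a)$ then yields the bound. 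The main obstacle is precisely here: guessing the right truncation in $k$ and the exact rational entries of the $M_k$, and then certifying symbolically that the resulting expression factors through $N-\tfrac12(a^2-1)(a^2-2)$ with $g_a(d)$ controlling the sign. Essentially all the difficulty is concentrated in this algebraic identity; the surrounding manipulations are routine bookkeeping with the Gegenbauer recurrence.

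Finally, the asymptotic expansion of $D_4(a)$ follows by locating the largest root of the quartic $g_a$. Writing $b=a^2$ and $x=b\xi$, the coefficient of $b^{11}$ in $g_a$ is $p(\xi)=-7\xi^4+48\xi^3-99\xi^2+54\xi=-7\xi(\xi-3)^2(\xi-\tfrac67)$, so $\xi=3$ (that is, $x=3a^2$) is a \emph{double} root of the leading-order equation; this degeneracy is exactly what produces a correction on the scale of $\sqrt b=a$. Setting $x=3b+\delta\sqrt b$ and expanding, the $b^{10}$ balance reads $1296-45\delta^2=0$, whence $\delta=\pm12/\sqrt5$, and the plus sign gives the largest root. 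Matching the next orders in $a$ pins down the constant term, giving $D_4(a)=3a^2+\tfrac{12}{\sqrt5}a-\tfrac{948}{25}+O(a^{-1})$.
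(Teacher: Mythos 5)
Your strategy coincides with the paper's: apply Theorem~\ref{thm: msdp_reduction} with $m=2$ to the single switching class of $2\times 2$ proper Gram matrices and exhibit a positive semidefinite dual combination of the constraints that collapses to $N(N-1)\bigl(\tfrac12(a^2-1)(a^2-2)-N\bigr)\geq 0$, with the sign of $g_a(d)$ governing positive semidefiniteness of the multipliers. Your evaluations $\u_1G^{-1}\u_1^{\T}=2\a^2/(1+\a)$, $\u_2G^{-1}\u_2^{\T}=2\a^2/(1-\a)$ are correct, and your asymptotic analysis of $D_4(a)$ is right and matches the paper: the coefficient of $b^{11}$ is indeed $-7\xi(\xi-3)^2(\xi-\tfrac67)$, and the order-$b^{10}$ balance $1296-45\delta^2=0$ gives $\delta=12/\sqrt5$.

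The gap is that the dual certificate --- which you correctly identify as carrying essentially all of the content --- is never produced, and without it the theorem is not proved. The paper's Lemma~\ref{lemma: 4sdp_main} resolves exactly this, and two pieces of information absent from your plan remove the guesswork. First, only two constraints are needed, $\SDP^{d,2}_0[G;X]$ and $\SDP^{d,2}_3[G;X]$ (no other $k$, and no two-point inequality beyond the counting identities); the multiplier for $k=3$ can be taken diagonal, $\mathrm{diag}(f_1,f_2)$, because the off-diagonal entry of $\SDP^{d,2}_3[G;X]$ vanishes identically. Second, complementary slackness with the known tight configurations forces the $3\times3$ multiplier $F$ to annihilate the vector $\bigl(4,\ (a+1)^3(a-2),\ (a-1)^3(a+2)\bigr)^{\T}$; imposing this null space together with matching the coefficients of $N(N-1)$, $y_1$, $y_2$, $z_1$, $z_2$, $z_3$ produces a solvable linear system that determines $F$, $f_1$, $f_2$ in closed form, and $g_a(d)$ then appears as the common numerator of the three $2\times2$ principal minors of $F$. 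Even then one must still verify, as the paper does, that $f_1$, $f_2$ and all principal minors of $F$ are nonnegative for $d\leq\lfloor D_4(a)\rfloor$; the signs of $f_1$ and $f_2$ are a separate check not subsumed by $g_a(d)\geq 0$. Until these entries are written down and their signs certified, your argument is a plausible programme rather than a proof.
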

Theorem~\ref{thm: 4sdp_main} can be proved by using the alternative four-point semidefinite constraints with switching reduction, i.e., Theorem~\ref{thm: msdp_reduction} with $m=2$. In the following we fix $\a = 1/a$, $G = \begin{pmatrix}1 & \a \\ \a & 1\end{pmatrix}$, $\u_1 = \begin{pmatrix}\a & \a\end{pmatrix}^{\T}$ and $\u_2 = \begin{pmatrix}\a & -\a\end{pmatrix}^{\T}$. As shown in Example~\ref{eg: switching}, the two-point distance distribution of switching classes is $N_0[G] = N(N-1)$; the three-point distance distribution of switching classes is
\[
    y_1 \coloneqq N_1[1; \a, \a, \a], y_2 \coloneqq N_1[1; \a, \a, -\a];
\]
the four-point distance distribution of switching classes is
\[
    z_1 \coloneqq N_2[G; \u_1, \u_1, \a], z_2 \coloneqq N_2[G; \u_1, \u_1, -\a], z_3 \coloneqq N_2[G; \u_2, \u_2, -\a].
\]
By Theorem~\ref{thm: msdp_reduction},
\[
    \SDP^{d, 2}_0[G; X] = \begin{pmatrix}
        N(N-1) & y_1 & y_2 \\
        y_1 & y_1 + z_1 + z_2 & 2z_2 \\
        y_2 & 2z_2 & y_2 + z_2 + z_3
    \end{pmatrix} \succeq 0;
\]
for $k \geq 1$,
\[
    \SDP^{d, 2}_k[G; X] = \begin{pmatrix}
        \begin{matrix*}[l] \phantom{+} y_1 Q^{d, 2}_k(G; \u_1, \u_1, 1) \\+ z_1Q^{d, 2}_k(G; \u_1, \u_1, \a) \\+ z_2Q^{d, 2}_k(G; \u_1, \u_1, -\a) \end{matrix*} & \begin{matrix*}[l] \phantom{+} z_2Q^{d, 2}_k(G; \u_1, \u_2, \a) \\+ z_2Q^{d, 2}_k(G; \u_1, \u_2, -\a) \end{matrix*} \\[2em]
        \begin{matrix*}[l] \phantom{+} z_2Q^{d, 2}_k(G; \u_2, \u_1, \a) \\+ z_2Q^{d, 2}_k(G; \u_2, \u_1, -\a) \end{matrix*} & \begin{matrix*}[l] \phantom{+} y_2 Q^{d, 2}_k(G; \u_2, \u_2, 1) \\+ z_2Q^{d, 2}_k(G; \u_2, \u_2, \a) \\+ z_3Q^{d, 2}_k(G; \u_2, \u_2, -\a) \end{matrix*}
    \end{pmatrix} \succeq 0.
\]

Let $\langle \cdot, \cdot\rangle$ be the Frobenius inner product of matrices defined by $\langle A, B\rangle = \text{tr}(A^{\T}B)$. By the Schur product theorem, $\langle A, B\rangle \geq 0$ when both $A$ and $B$ are positive semidefinite. Our goal is to find the dual matrices $\{F_k\}$ such that
\[
    \sum_k \left\langle \SDP^{d, 2}_k[G; X], F_k \right\rangle \geq 0 \iff N \leq \frac{1}{2}(a^2-1)(a^2-2).
\]
If so, then $N_{1/a}(d)$ has an upper bound $(a^2-1)(a^2-2)/2$ whenever all the matrices $F_k$ are positive semidefinite.

\begin{lemma}\label{lemma: 4sdp_main}
There exists a symmetric $3 \times 3$ matrix $F$ and two real numbers $f_1$, $f_2$ such that
\[
    \left\langle \SDP^{d, 2}_0[G; X], F\right\rangle + \left\langle\SDP^{d, 2}_3[G; X], \begin{pmatrix} f_1 & 0 \\ 0 & f_2\end{pmatrix}\right\rangle = N(N-1)\left(\frac{1}{2}(a^2-1)(a^2-2)-2\right) - (y_1 + y_2).
\]
\end{lemma}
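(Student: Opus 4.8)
The plan is to prove the identity purely by expanding the two Frobenius inner products and matching coefficients, treating $f_1,f_2$ as free parameters and solving for the entries of $F$. The target on the right is engineered for a later step: once $F$ and $\mathrm{diag}(f_1,f_2)$ are also shown to be positive semidefinite, the left-hand side is nonnegative by the Schur product theorem, while the right-hand side equals $N(N-1)\big(\tfrac12(a^2-1)(a^2-2)-N\big)$ after substituting $y_1+y_2=N(N-1)(N-2)$ from Example~\ref{eg: switching}, forcing $N\le\tfrac12(a^2-1)(a^2-2)$. For the present lemma, however, only the algebraic identity is required, so positivity plays no role yet.

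Since $\SDP^{d,2}_0[G;X]$ and $F$ are both symmetric, I would write $\langle\SDP^{d,2}_0[G;X],F\rangle=\sum_i A_{ii}F_{ii}+2\sum_{i<j}A_{ij}F_{ij}$, where $A=\SDP^{d,2}_0[G;X]$ has the explicit entries displayed just before the lemma, and likewise $\langle\SDP^{d,2}_3[G;X],\mathrm{diag}(f_1,f_2)\rangle=f_1B_{11}+f_2B_{22}$, where $B_{11},B_{22}$ are the diagonal entries of $\SDP^{d,2}_3[G;X]$. Both are linear forms in the six quantities $N(N-1),y_1,y_2,z_1,z_2,z_3$, so the first step is to expand the full left-hand side and read off the coefficient of each quantity. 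I would then impose that these coefficients match the right-hand side, namely $\tfrac12(a^2-1)(a^2-2)-2$ for $N(N-1)$, $-1$ for each of $y_1,y_2$, and $0$ for each of $z_1,z_2,z_3$.

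This yields six linear equations in the eight unknowns $F_{11},F_{12},F_{13},F_{22},F_{23},F_{33},f_1,f_2$, and they solve in a triangular fashion: the $N(N-1)$ equation fixes $F_{11}$ outright; holding $f_1,f_2$ free, the $z_1$ and $z_3$ equations determine $F_{22}$ and $F_{33}$, the $y_1$ and $y_2$ equations then determine $F_{12}$ and $F_{13}$, and the $z_2$ equation finally determines $F_{23}$. Hence the system is consistent for every choice of $f_1,f_2$ (the solution set is two-dimensional), so a solution exists, which is exactly what the lemma claims; the two free parameters are precisely what is later exploited to arrange positivity. The only point beyond bookkeeping is that the coefficient of $z_2$ can be driven to zero: unlike $z_1,z_3$, the quantity $z_2$ occurs in the $(2,2)$, $(3,3)$ and $(2,3)$ entries of $\SDP^{d,2}_0[G;X]$ and in both $B_{11}$ and $B_{22}$, so its total coefficient is $F_{22}+F_{33}+4F_{23}+f_1Q^{d,2}_3(G;\u_1,\u_1,-\a)+f_2Q^{d,2}_3(G;\u_2,\u_2,\a)$, and since $F_{22},F_{33}$ are already pinned down, the off-diagonal slot $F_{23}$ is exactly what absorbs the remainder. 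I expect the only genuine labor to be computational: one must evaluate the six diagonal multivariate Gegenbauer numbers $Q^{d,2}_3(G;\u_i,\u_i,t)$ for $t\in\{1,\a,-\a\}$ from the defining formula with $m=2$, $k=3$ and the given $G,\u_1,\u_2$, computing the scalars $\u_iG^{-1}\u_i^{\T}$ and substituting into $P^{d-2}_3$. These carry all the dependence on $d$ and $a$ and are what make the subsequent positivity analysis delicate, but in the identity itself they enter merely as fixed constants.
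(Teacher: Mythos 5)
Your proposal is correct and follows essentially the same route as the paper: expanding both Frobenius products as linear forms in $N(N-1),y_1,y_2,z_1,z_2,z_3$ and matching coefficients yields exactly the paper's six equations (E3)--(E8), which, as you observe, form a consistent triangular system with $f_1,f_2$ left free, so existence is immediate. The paper's proof additionally imposes two null-space conditions (E1)--(E2) on $F$ to single out the specific solution whose entries, minors, and positivity are then computed explicitly and used in the proofs of Theorem~\ref{thm: 4sdp_main} and Lemma~\ref{lemma: 4sdp_rank}; your argument establishes the lemma as literally stated but defers that choice, which is where the real work of the paper lies.
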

\begin{proof}
Let $F = \begin{pmatrix} F_0 & F_1 & F_2 \\ F_1 & F_3 & F_4 \\ F_2 & F_4 & F_5 \end{pmatrix}$. The matrix $F$ we found has null space
\[
    \R\begin{pmatrix}4 \\ (a+1)^3(a-2) \\ (a-1)^3(a+2)\end{pmatrix}.
\]
That is,
\begin{align}
    4F_1 + (a+1)^3(a-2)F_3 + (a-1)^3(a+2)F_4 &= 0 \tag{E1} \\
    4F_2 + (a+1)^3(a-2)F_4 + (a-1)^3(a+2)F_5 &= 0 \tag{E2}
\end{align}
On the other hand, to equate both sides of the equation, the coefficients of the variables in both sides should be the same. Therefore
\begin{align}
    N(N-1):\ & F_0 = \frac{1}{2}(a^2-1)(a^2-2)-2 \tag{E3} \\
    y_1:\ & (2F_1+F_3) + Q^{d,2}_3(G; \u_1, \u_1, 1)f_1 = -1 \tag{E4} \\
    y_2:\ & (2F_2+F_5) + Q^{d,2}_3(G; \u_2, \u_2, 1)f_2 = -1 \tag{E5} \\
    z_1:\ & F_3 + Q^{d,2}_3(G; \u_1, \u_1, \a)f_1 = 0 \tag{E6} \\
    z_2:\ & (F_3+4F_4+F_5) + Q^{d,2}_3(G; \u_1, \u_1, -\a)f_1 + Q^{d,2}_3(G; \u_2, \u_2, \a)f_2 = 0 \tag{E7} \\
    z_3:\ & F_5 + Q^{d,2}_3(G; \u_2, \u_2, -\a)f_2 = 0 \tag{E8}
\end{align}
(E1) to (E8) is a system of linear equations. When $(a^4-5a^2+12) - (a^2+7)d \neq 0$, we can solve $F_0, F_1, F_2, F_3, F_4, F_5, f_1, f_2$ in terms of $d$ and $a$. The values of $f_1$ and $f_2$ are
\begin{align*}
    f_1 &= \frac{a^3(d-3)\Big(3a(a-2)^2(a+1)^2 - (a^3+9a-6)d\Big)}{3d(a-2)(a-1)(a+1)\Big(a^4-5a^2+12 - (a^2+7)d\Big)}, \\
    f_2 &= \frac{-a^3(d-3)\Big(3a(a+2)^2(a-1)^2 - (a^3+9a+6)d\Big)}{3d(a+2)(a-1)(a+1)\Big(a^4-5a^2+12 - (a^2+7)d\Big)}.
\end{align*}
The principal minors of $F$ are
\begin{align*}
    F_0 &= \frac{1}{2}(a^2-1)(a^2-2)-2, \\
    F_3 &= \frac{(a-1)^3\Big(3(a+2)^2 - d\Big)}{a^3(a+1)^3(d-3)}f_1, \\
    F_5 &= \frac{-(a+1)^3\Big(3(a-2)^2 - d\Big)}{a^3(a-1)^3(d-3)}f_2, \\
    F_0F_3 - F_1^2 &= \frac{g_a(d)}{36d^2(a-2)^2(a+1)^6\Big(a^4-5a^2+12 - (a^2+7)d\Big)^2}, \\
    F_0F_5 - F_2^2 &= \frac{g_a(d)}{36d^2(a+2)^2(a-1)^6\Big(a^4-5a^2+12 - (a^2+7)d\Big)^2}, \\
    F_3F_5 - F_4^2 &= \frac{4g_a(d)}{9d^2(a-2)^2(a+2)^2(a-1)^6(a+1)^6\Big(a^4-5a^2+12 - (a^2+7)d\Big)^2}, \\
    \det F &= 0.
\end{align*}
Note that the polynomial $g_a(x)$ mentioned in the statement of Theorem~\ref{thm: 4sdp_main} appears as the numerator of the $2 \times 2$ principal minors of $F$.
\end{proof}

\begin{proof}[Proof of Theorem~\ref{thm: 4sdp_main}]
By Lemma~\ref{lemma: 4sdp_main}, if $f_1, f_2$ and all the principal minors of $F$ are nonnegative, then
\begin{align*}
    0 &\leq \left\langle \SDP^{d, 2}_0[G; X], F\right\rangle + \left\langle\SDP^{d, 2}_3[G; X], \begin{pmatrix} f_1 & 0 \\ 0 & f_2\end{pmatrix}\right\rangle = N(N-1)\left(\frac{1}{2}(a^2-1)(a^2-2) - N\right) \\
    &\Longrightarrow N \leq \frac{1}{2}(a^2-1)(a^2-2).
\end{align*}
For $a=3$, $D_4(3) \approx 14.42$ and $\lfloor D_4(3) \rfloor = 14$; for $a \geq 5$, one can check that the leading coefficient of $g_a(x)$ is negative, and the equation $g_a(x) = 0$ has four real roots in the disjoint intervals
\begin{align*}
    & \left[0, \frac{3}{2a^2}\right], \left[\frac{6}{7}a^2-8, \frac{6}{7}a^2\right], \\
    & \left[3a^2 - \frac{12}{\sqrt{5}}a - \frac{948}{25} + \frac{30\sqrt{5}}{a}, 3a^2 - \frac{12}{\sqrt{5}}a - \frac{948}{25} + \frac{45\sqrt{5}}{a}\right], \\
    & \left[3a^2 + \frac{12}{\sqrt{5}}a - \frac{948}{25} - \frac{32\sqrt{5}}{a}, 3a^2 + \frac{12}{\sqrt{5}}a - \frac{948}{25} + \frac{2\sqrt{5}}{a}\right],
\end{align*}
which proves the asymptotic argument of $D_4(a)$. In addition, we have
\[
    3a^2 + \frac{12}{\sqrt{5}}a - \frac{948}{25} - \frac{32\sqrt{5}}{a} - 1 \leq \lfloor D_4(a) \rfloor \leq 3a^2 + \frac{12}{\sqrt{5}}a - \frac{948}{25} + \frac{2\sqrt{5}}{a}.
\]
In either case, when taking $d = \lfloor D_4(a) \rfloor$, one can check that $f_1$, $f_2$ and all the principal minors of $F$ except $\det F$ are positive. Therefore
\[
    N_{1/a}(d) \leq \frac{1}{2}(a^2-1)(a^2-2)
\]
whenever $d \leq \lfloor D_4(a) \rfloor$.
\end{proof}

\begin{rmk}
Theorem~\ref{thm: 4sdp_main} explained the numerical results by~\citet{de2021k} partially. By the four-point semidefinite programming method, which is denoted by $\Delta_4(G)^{\ast}$ by~\cite{de2021k},
\[
    N_{1/5}(65) \leq 276,\ N_{1/7}(145) \leq 1128,\ N_{1/9}(251) \leq 3160,\ N_{1/11}(381) \leq 7140, 
\]
while the largest applicable dimensions we found are
\[
    \lfloor D_4(5) \rfloor = 64,\ \lfloor D_4(7) \rfloor = 144,\ \lfloor D_4(9) \rfloor = 250,\ \lfloor D_4(11) \rfloor = 380.
\]
\end{rmk}

\begin{rmk}
Lemma~\ref{lemma: 4sdp_main} can be understood as the duality of semidefinite programming. The found dual matrix $F$ paired with $\SDP^{d, 2}_0[G; X]$ is a matrix with the null space
\[
    \R\begin{pmatrix}1 \\ (a+1)^3(a-2)/4 \\ (a-1)^3(a+2)/4\end{pmatrix}.
\]
This is consistent with the fact that, all the known constructions attaining the bound in Theorem~\ref{thm: 4sdp_main} satisfy
\[
    \SDP^{d, 2}_0[G; X] = N(N-1)\begin{pmatrix}1 \\ (a+1)^3(a-2)/4 \\ (a-1)^3(a+2)/4\end{pmatrix}\begin{pmatrix}1 & \frac{(a+1)^3(a-2)}{4} & \frac{(a-1)^3(a+2)}{4}\end{pmatrix},
\]
which is a rank-$1$ matrix. On the other hand, the found dual matrix paired with $\SDP^{d, 2}_3[G; X]$ is a diagonal matrix. The off-diagonal entries are irrelevant, since the off-diagonal entry in $\SDP^{d, 2}_3[G; X]$ is
\[
    z_2\left(Q^{d, 2}_3(G; \u_1, \u_2, \a) + Q^{d, 2}_3(G; \u_1, \u_2, -\a)\right),
\]
which must be zero.
\end{rmk}

\subsection{Proof of Theorem~\ref{thm: maximum_mirror}}

\citet{glazyrin2018upper} proved that the constructions attaining the three-point semidefinite bound must lie in an $(a^2-2)$-dimensional subspace. As a generalization, Theorem~\ref{thm: maximum_mirror} states that the constructions attaining the four-point semidefinite bound must lie in an $(a^2-2)$-dimensional subspace as well. Therefore Theorem~\ref{thm: maximum_mirror} gives restrictions for the constructions of equiangular lines with cardinality $(a^2-1)(a^2-2)/2$ and inner product $1/a$ for dimensions $d \in (D_3(a), D_4(a)]$.

If a spherical projection $X$ is given by choosing an unit vector $b$ along one line, and then choosing unit vectors with positive inner products with $b$ along the other lines, then $X \setminus \{b\}$ is called the {\bf derived code} of the set of equiangular lines with respect to $b$.

The proof by~\citet{glazyrin2018upper} analyzes the structure of the derived codes, of which the two-point distance distributions are known from the linear programming method. However, our proof of Theorem~\ref{thm: 4sdp_main_mirror} cannot determine the distance distributions of derived codes directly. Instead, we consider the {\bf distance distribution with two fixed points}, and show that the distribution can be determined if the matrix $\SDP^{d, 2}_0[G; X]$ in the semidefinite constraint has rank $1$, which is the case when $d < D_4(a)$.

\begin{lemma}\label{lemma: 4sdp_rank}
Let $a \geq 3$ be an odd integer and $d < D_4(a)$. Suppose $X$ is a spherical projection of a set of equiangular lines in $\R^d$ with cardinality $(a^2-1)(a^2-2)/2$ and inner product $1/a$. Then the matrix $\SDP^{d, 2}_0[G; X]$ is of rank $1$.
\end{lemma}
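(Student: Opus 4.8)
The plan is to extract the rank of $\SDP^{d, 2}_0[G; X]$ from the complementary slackness that occurs when the bound of Theorem~\ref{thm: 4sdp_main} is attained with equality, using the dual certificate built in Lemma~\ref{lemma: 4sdp_main}. First I would substitute the maximal cardinality $N = \frac{1}{2}(a^2-1)(a^2-2)$ into the identity of Lemma~\ref{lemma: 4sdp_main}. The factor $\frac{1}{2}(a^2-1)(a^2-2) - N$ on the right-hand side then vanishes, and since $y_1 + y_2 \geq 0$ contributes only through that product, the whole right-hand side collapses to $0$, giving
\[
    \left\langle \SDP^{d, 2}_0[G; X], F\right\rangle + \left\langle \SDP^{d, 2}_3[G; X], \begin{pmatrix} f_1 & 0 \\ 0 & f_2 \end{pmatrix}\right\rangle = 0.
\]

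Next I would argue that each summand is separately nonnegative, hence each is zero. The constraint matrices $\SDP^{d, 2}_0[G; X]$ and $\SDP^{d, 2}_3[G; X]$ are positive semidefinite by Theorem~\ref{thm: msdp_reduction}; the matrix $F$ is positive semidefinite because, for $d < D_4(a)$, its diagonal entries and all three $2 \times 2$ principal minors are positive while $\det F = 0$, exactly as computed in Lemma~\ref{lemma: 4sdp_main}; and $\mathrm{diag}(f_1, f_2)$ is positive definite since $f_1, f_2 > 0$ in this range. By the Schur product theorem the Frobenius inner product of two positive semidefinite matrices is nonnegative, so a sum of two such products can vanish only if both vanish. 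In particular $\langle \SDP^{d, 2}_0[G; X], F\rangle = 0$.

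Then I would invoke the standard linear-algebra fact that for symmetric positive semidefinite $A, B$ one has $\langle A, B\rangle = \text{tr}(AB) = 0$ if and only if $AB = 0$, that is, if and only if the column spaces of $A$ and $B$ are orthogonal. Applied to $A = \SDP^{d, 2}_0[G; X]$ and $B = F$, this yields $\text{range}(\SDP^{d, 2}_0[G; X]) \subseteq \text{range}(F)^{\perp} = \ker F$. For $d < D_4(a)$ the kernel of $F$ is exactly the one-dimensional span of $(4,\ (a+1)^3(a-2),\ (a-1)^3(a+2))^{\T}$ recorded in Lemma~\ref{lemma: 4sdp_main}, so $\SDP^{d, 2}_0[G; X]$ has rank at most $1$. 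Since its $(1,1)$ entry is $N(N-1) > 0$, the matrix is nonzero, so its rank is exactly $1$.

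The main obstacle I anticipate is book-keeping rather than conceptual: I must confirm that for every admissible dimension with $d < D_4(a)$, and not merely at the endpoint $d = \lfloor D_4(a) \rfloor$ already verified in the proof of Theorem~\ref{thm: 4sdp_main}, the matrix $F$ is genuinely positive semidefinite of rank $2$ with $f_1, f_2 > 0$; equivalently, the sign of $g_a(d)$ and of the relevant minors must persist throughout the interval. This reduces to the polynomial sign analysis of $g_a$ already set up in Theorem~\ref{thm: 4sdp_main}, together with the observation that $D_4(a)$ is precisely the largest root at which the rank-$2$ certificate $F$ degenerates to rank $1$, so that strict inequality $d < D_4(a)$ is exactly what keeps $\ker F$ one-dimensional and forces the rank-$1$ conclusion.
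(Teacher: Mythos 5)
Your overall strategy --- complementary slackness between $\SDP^{d,2}_0[G;X]$ and the dual certificate $F$ of Lemma~\ref{lemma: 4sdp_main}, followed by the observation that a positive semidefinite matrix whose Frobenius product with a rank-$2$ positive semidefinite matrix vanishes must have its range contained in the one-dimensional kernel of that matrix --- is exactly the paper's argument. However, the step you dismiss as ``book-keeping'' is a genuine gap, and the way you propose to close it does not work. Your argument needs $F$, $f_1$, $f_2$ (all of which depend on $d$) to be positive semidefinite, respectively nonnegative, at the \emph{actual} dimension $d$ of the configuration. But the $2\times 2$ principal minors of $F$ have numerator $g_a(d)$, and $g_a$ has negative leading coefficient and four real roots $r_1<r_2<r_3<r_4$, so $g_a(d)<0$ for every $d\in(r_2,r_3)$. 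Concretely, for $a=5$ the localization in the proof of Theorem~\ref{thm: 4sdp_main} gives $r_3\geq 3\cdot 25-12\sqrt{5}-\tfrac{948}{25}+6\sqrt{5}\approx 23.66$, so at $d=23$ --- which satisfies $d<D_4(5)\approx 64.56$, admits an actual configuration of $276$ lines, and is precisely the dimension needed for Corollary~\ref{cor: uniqueness}(b) --- the certificate $F$ computed with $d=23$ is \emph{not} positive semidefinite and your chain of inequalities breaks. It is not true that ``strict inequality $d<D_4(a)$ is exactly what keeps $\ker F$ one-dimensional'': $D_4(a)$ is only the largest root of $g_a$, and the minors are positive only on $(r_1,r_2)\cup(r_3,r_4)$, not on all of $(0,D_4(a))$.

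The missing idea is the paper's opening move: without loss of generality one may replace $d$ by $\lfloor D_4(a)\rfloor$. This is legitimate because $X\subseteq\S^{d-1}\subseteq\S^{d'-1}$ for any $d'\geq d$, so the semidefinite constraints in dimension $d'=\lfloor D_4(a)\rfloor$ also hold for $X$; and the matrix $\SDP^{d,2}_0[G;X]$ whose rank is at issue does not depend on $d$ at all, since $Q^{d,2}_0\equiv 1$ and its entries involve only the distance distributions $N, y_1, y_2, z_1, z_2, z_3$. At $d'=\lfloor D_4(a)\rfloor$ the positivity of $f_1$, $f_2$ and of all principal minors of $F$ except $\det F$ has already been verified in the proof of Theorem~\ref{thm: 4sdp_main}, and from that point the rest of your argument (complementary slackness, orthogonality of ranges, rank at most $1$, nonvanishing of the $(1,1)$ entry) goes through verbatim.
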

\begin{proof}
Without the loss of generality, we may assume that $d = \lfloor D_4(a) \rfloor$. As in the proof of Theorem~\ref{thm: 4sdp_main},
\[
    0 \leq \left\langle \SDP^{d, 2}_0[G; X], F\right\rangle + \left\langle\SDP^{d, 2}_3[G; X], \begin{pmatrix} f_1 & 0 \\ 0 & f_2\end{pmatrix}\right\rangle = N(N-1)\left(\frac{1}{2}(a^2-1)(a^2-2)-N\right) = 0,
\]
therefore both $\left\langle \SDP^{d, 2}_0[G; X], F\right\rangle$ and $\left\langle\SDP^{d, 2}_3[G; X], \begin{pmatrix} f_1 & 0 \\ 0 & f_2\end{pmatrix}\right\rangle$ are zero. Recall that for two positive semidefinite $n \times n$ matrices $A, B$ with $\langle A, B\rangle = 0$, we have the inequality $\text{rank}(A) + \text{rank}(B) \leq n$, and all the eigenvectors of $A$ with positive eigenvalues are in $\ker B$. For $d = \lfloor D_4(a) \rfloor$, all the principal minors of $F$ except $\det F$ are positive, hence $F$ is a rank 2 matrix with null space
\[
    \R\begin{pmatrix}1 \\ (a+1)^3(a-2)/4 \\ (a-1)^3(a+2)/4\end{pmatrix},
\]
and the rank of the matrix $\SDP^{d, 2}_0[G; X]$ is at most $1$. Since the matrix is nonzero, the rank is $1$ and
\[
    \SDP^{d, 2}_0[G; X] = N(N-1)\begin{pmatrix}1 \\ (a+1)^3(a-2)/4 \\ (a-1)^3(a+2)/4\end{pmatrix}\begin{pmatrix}1 & (a+1)^3(a-2)/4 & (a-1)^3(a+2)/4\end{pmatrix}.
\]
\end{proof}

Let $b, b' \in X$ be different unit vectors. Consider the distance distribution with two fixed points
\begin{align*}
    N_{b, b'} &\coloneqq \#\{c \in X: G(b, b', c) \sim \begin{pmatrix}1 & \a & \a \\ \a & 1 & \a \\ \a & \a & 1\end{pmatrix}\}, \\
    N'_{b, b'} &\coloneqq \#\{c \in X: G(b, b', c) \sim \begin{pmatrix}1 & \a & \a \\ \a & 1 & -\a \\ \a & -\a & 1\end{pmatrix}\}.
\end{align*}
Clearly, we have $N'_{b, b'} = (N-2) - N_{b, b'}$. Furthermore, the two values can be determined when the matrix $\SDP^{d, 2}_0[G; X]$ is of rank $1$.
\begin{lemma}\label{lemma: 4sdp_fix_two_pt}
Suppose the matrix
\[
    \SDP^{d, 2}_0[G; X] = \begin{pmatrix}
        N(N-1) & y_1 & y_2 \\
        y_1 & y_1 + z_1 + z_2 & 2z_2 \\
        y_2 & 2z_2 & y_2 + z_2 + z_3
    \end{pmatrix}
\]
is of rank $1$. Then for any $b, b' \in X$ and $b \neq b'$,
\[
    N_{b, b'} = \frac{y_1}{N(N-1)} \text{ and } N'_{b, b'} = \frac{y_2}{N(N-1)}.
\]
\end{lemma}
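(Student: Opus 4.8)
The plan is to realize the $3\times 3$ matrix $\SDP^{d,2}_0[G;X]$ as a sum of rank-one positive semidefinite matrices, one for each ordered pair of distinct points, and then to read off the constancy of $N_{b,b'}$ and $N'_{b,b'}$ from the rank-one hypothesis. Concretely, for each ordered pair $B=(b,b')$ with $b\neq b'$ I would set
\[
    w_B \coloneqq \begin{pmatrix} 1 \\ N_{b,b'} \\ N'_{b,b'}\end{pmatrix} \in \R^3
\]
(indexed by $\e_{\1},\e_{\u_1},\e_{\u_2}$ as in Theorem~\ref{thm: msdp_reduction}) and claim that
\[
    \SDP^{d,2}_0[G;X] = \sum_{b\neq b'} w_B w_B^{\T}.
\]
This is the switching-reduced incarnation of the $k=0$ instance of Musin's inequality~\eqref{eq: msdp_doublesum}, where each fixed pair contributes the outer product of its own profile vector; but I would actually establish it by a direct entry-by-entry check, since a symmetric $3\times 3$ matrix is pinned down by its six entries.

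First I would verify those six identities. The coordinate $(w_B)_{\u_1}$ counts the points $c$ completing a positive-sign (all-$\a$ type) triple with $b,b'$, the coordinate $(w_B)_{\u_2}$ counts those completing a negative-sign triple, and the leading coordinate is the fixed normalization $1$. Summing $w_Bw_B^{\T}$ over all $N(N-1)$ ordered pairs reproduces the matrix of Theorem~\ref{thm: msdp_reduction}: the $(\1,\1)$ entry is $\sum_B 1=N(N-1)$; the $(\1,\u_1)$ and $(\1,\u_2)$ entries are $\sum_B N_{b,b'}=y_1$ and $\sum_B N'_{b,b'}=y_2$, because each ordered triple in the relevant switching class is counted once with its third vertex singled out; the diagonal entries $\sum_B N_{b,b'}^2=y_1+z_1+z_2$ and $\sum_B (N'_{b,b'})^2=y_2+z_2+z_3$ split the resulting four-tuples according to whether the two added points coincide or have inner product $\pm\a$; and the cross entry $\sum_B N_{b,b'}N'_{b,b'}=2z_2$ uses the switching-class coincidences recorded in Example~\ref{eg: switching} (for instance $N_2[G;\u_1,\u_2,\a]=N_2[G;\u_1,\u_2,-\a]=N_2[G;\u_2,\u_2,\a]=z_2$). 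Matching all six entries establishes the decomposition.

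With the decomposition in hand the conclusion is a short linear-algebra step of the same flavour as Lemma~\ref{lemma: 4sdp_rank}. A sum of rank-one positive semidefinite matrices has rank $1$ only if every summand has column space inside the common one-dimensional image, so each $w_B$ is a scalar multiple of a single vector $v$. Because $(w_B)_{\1}=1$ is fixed and nonzero for every pair, the scalars are all forced to equal $1/v_{\1}$, whence every $w_B$ is the same vector; thus $N_{b,b'}$ and $N'_{b,b'}$ do not depend on the pair. Finally I would pin down the constants by averaging: since all $N_{b,b'}$ are equal and $\sum_{b\neq b'} N_{b,b'}=y_1$ over $N(N-1)$ pairs, each equals $y_1/(N(N-1))$, and likewise $N'_{b,b'}=y_2/(N(N-1))$, as claimed.

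The main obstacle is the bookkeeping in the decomposition step. One must ensure that pairs $(b,b')$ with $b\cdot b'=-\a$ are handled correctly (the definitions of $N_{b,b'}$ and $N'_{b,b'}$ are invariant under switching of the full $3\times 3$ Gram matrix, which is what makes the aggregate sums well defined), that the degenerate contributions $c\in\{b,b'\}$ contribute to the $\e_{\1}$ coordinate with the normalization that makes the $(\1,\1)$ entry equal $N_{m-2}[G]=N(N-1)$, and that the switching-class coincidences of Example~\ref{eg: switching} are invoked so that $\sum_B w_Bw_B^{\T}$ matches the matrix of Theorem~\ref{thm: msdp_reduction} on the nose. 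Once that identity is secured, the rank argument and the averaging are routine.
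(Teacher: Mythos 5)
Your proposal is correct and follows essentially the same route as the paper: both write $\SDP^{d,2}_0[G;X]$ as the sum over ordered pairs $(b,b')$ of the rank-one outer products $w_Bw_B^{\T}$ with $w_B=(1,N_{b,b'},N'_{b,b'})^{\T}$ and then exploit the rank-one hypothesis to force all profile vectors to coincide. The only cosmetic difference is in the last step, where the paper reads off the conclusion from the vanishing of a $2\times 2$ minor (a Cauchy--Schwarz equality) while you argue via containment of column spaces; these are equivalent.
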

\begin{proof}
Consider the matrix
\[
    \begin{pmatrix}
        1 & N_{b, b'} & N'_{b, b'} \\
        N_{b, b'} & (N_{b, b'})^2 & N_{b, b'}N'_{b, b'} \\
        N'_{b, b'} & N_{b, b'} N'_{b, b'} & (N'_{b, b'})^2
    \end{pmatrix}.
\]
It is not hard to see that the sum of such matrices over all $(b, b') \in X^2$ with $b \neq b'$ is $\SDP^{d, 2}_0[G; X]$. Since $\SDP^{d, 2}_0[G; X]$ is of rank $1$, we have
\[
    0 = N(N-1)(y_1+z_1+z_2) - y_1^2 = \left(\sum_{b \neq b'} 1 \right)\left(\sum_{b \neq b'} N_{b, b'}^2\right) - \left(\sum_{b \neq b'} N_{b, b'}\right)^2.
\]
Therefore, the values $N_{b, b'}$ for all $b \neq b'$ must be the same and equal $y_1 / N(N-1)$. Similarly, we have $N'_{b, b'} = y_2 / N(N-1)$.
\end{proof}

\begin{proof}[Proof of Theorem~\ref{thm: maximum_mirror}]
Suppose $a \geq 3$ is an odd integer, $d < D_4(a)$, and $X$ is a spherical projection of a set of equiangular lines in $\R^d$ with cardinality $(a^2-1)(a^2-2)/2$ and inner product $1/a$. Without the loss of generality, we assume that the spherical projection is chosen such that $X \setminus \{b\}$ is a derived code for some $b \in X$. Consider a graph $H$ with vertex set $V = X \setminus\{b\}$, and $(c, c') \in V^2$ forms an edge if and only if $c \cdot c' = \a$. We first show that the graph $H$ must be a strongly regular graph with parameters
\[
    \text{SRG}\left(\frac{a^2(a^2-3)}{2}, \frac{(a+1)^3(a-2)}{4}, \frac{(a+1)(a+2)(a^2-5)}{8}, \frac{(a+1)^3(a-2)}{8}\right).
\]

Indeed, the number of vertices is $v = |V| = N-1 = a^2(a^2-3)/2$. By Lemma~\ref{lemma: 4sdp_rank} and Lemma~\ref{lemma: 4sdp_fix_two_pt}, we have $N_{c, c'} = (a+1)^3(a-2)/4$ and $N'_{c, c'} = (a-1)^3(a+2)/4$ for any $c, c' \in X$ and $c \neq c'$. For any $b \in V$, the degree of $b$ in $H$ is
\[
    \deg_H(b) = \#\{c \in V: G(b_0, b, c) = \begin{pmatrix} 1 & \a & \a \\ \a & 1 & \a \\ \a & \a & 1\end{pmatrix}\} = N_{b_0, b} = \frac{(a+1)^3(a-2)}{4}.
\]
Therefore $H$ is $k$-regular with $k = (a+1)^3(a-2)/4$. For nonadjacent vertices $b, b' \in V$, let $\mu$ be the number of the common neighbors of $b$ and $b'$ in $H$. Then there are $(k-\mu)$ neighbors of $b$ which are not neighbors of $b'$, and $(k-\mu)$ neighbors of $b'$ which are not neighbors of $b$. Therefore
\[
    k = N_{b, b'} = (k-\mu) + (k-\mu) \Longrightarrow \mu = \frac{k}{2} = \frac{(a+1)^3(a-2)}{8}.
\]
By a similar argument, one can show that any two adjacent vertices has $\lambda = (3k-v-1)/2 = (a+1)(a+2)(a^2-5)/8$ common neighbors.

Let $M$ be the adjacency matrix of $H$. By the theory of strongly regular graphs, the eigenvalues of $M$ are $k$, $\frac{k}{a+1}$ and $-\frac{a+1}{2}$ with multiplicities $1$, $a^2-3$ and $v - a^2 + 2$, respectively. Since the derived code $X \setminus\{b_0\}$ is a spherical two-distance set in $\S^{d-2}$ with inner product set
\[
    \left\{\frac{\pm \a - \a^2}{1-\a^2}\right\} = \left\{\frac{\pm a - 1}{a^2 - 1}\right\} = \left\{\frac{1}{a+1}, \frac{-1}{a-1}\right\},
\]
the Gram matrix of the derived code is
\[
    I + \frac{1}{a+1}M - \frac{1}{a-1}(J-I-M).
\]
The eigenvalues of the Gram matrix are $\frac{a^2}{2}$ and $0$ with multiplicities $a^2-3$ and $v - a^2 + 3$, respectively. Therefore, the unit vectors in the derived code span an $(a^2-3)$-dimensional space, and the original set of equiangular lines lie in an $(a^2-2)$-dimensional space.

\end{proof}

The above proof indicates that the derived code of a construction attaining the four-point semidefinite bound must form a strongly regular graph with parameters
\[
    \text{SRG}\left(\frac{a^2(a^2-3)}{2}, \frac{(a+1)^3(a-2)}{4}, \frac{(a+1)(a+2)(a^2-5)}{8}, \frac{(a+1)^3(a-2)}{8}\right).
\]
Such strongly regular graphs are known to exist and are unique for $a=3$ and $a=5$ (see~\citet{seidel1968strongly, goethals1975regular}). Therefore, we can determine the uniqueness of some maximum constructions of equiangular lines.
\begin{cor}\label{cor: uniqueness}
The following maximum constructions of equiangular lines are unique up to orthogonal transformations.
\begin{enumerate}[label=(\alph*)]
    \item $28$ equiangular lines in $\R^d$ for $7 \leq d \leq 14$ with $\a = 1/3$.
    \item $276$ equiangular lines in $\R^d$ for $23 \leq d \leq 64$ with $\a = 1/5$.
\end{enumerate}
\end{cor}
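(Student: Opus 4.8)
The plan is to feed the structural output of Theorem~\ref{thm: maximum_mirror} into the classification of strongly regular graphs. Recall that the proof of Theorem~\ref{thm: maximum_mirror} shows that, for a maximum construction with $d < D_4(a)$, the graph $H$ defined on a derived code $X \setminus \{b_0\}$ (with $c, c'$ adjacent exactly when $c \cdot c' = \a$) is a strongly regular graph with the parameters displayed after that proof. Specializing, for $a = 3$ one obtains $\text{SRG}(27, 16, 10, 8)$, and for $a = 5$ one obtains $\text{SRG}(275, 162, 105, 81)$. These are the complements of the Schläfli graph $\text{SRG}(27, 10, 1, 5)$ and the McLaughlin graph $\text{SRG}(275, 112, 30, 56)$, each of which is the unique strongly regular graph with its parameters by~\citet{seidel1968strongly, goethals1975regular}. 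Since complementation is a bijection on strongly regular graphs, $H$ is determined up to isomorphism in both cases.

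Next I would promote this combinatorial rigidity to geometric rigidity. Fix a maximum construction in $\R^d$ and choose the spherical projection $X = \{b_0\} \cup (X \setminus \{b_0\})$ so that $X \setminus \{b_0\}$ is a derived code. Writing $M$ for the adjacency matrix of $H$, the full Gram matrix of $X$ has each off-diagonal entry of the $b_0$-row and $b_0$-column equal to $\a$, while the block indexed by $X \setminus \{b_0\}$ equals $I + \a M - \a(J - I - M)$; equivalently, after projecting out $b_0$, the derived code has Gram matrix $I + \frac{1}{a+1}M - \frac{1}{a-1}(J - I - M)$ as computed in the proof of Theorem~\ref{thm: maximum_mirror}. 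Since $H$ is unique up to isomorphism, $M$ is determined up to a simultaneous permutation of rows and columns, so the Gram matrix of $X$ is determined up to permutation. As a Gram matrix determines its point set up to orthogonal transformation, the spherical projection $X$, and hence the underlying set of equiangular lines, is unique up to orthogonal transformation; embedding this $(a^2-2)$-dimensional configuration into a larger $\R^d$ introduces no further ambiguity, since $O(d)$ acts transitively on the relevant subspaces.

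It remains to pin down the admissible range of $d$. For $a = 3$ the cardinality is $(a^2-1)(a^2-2)/2 = 28$ and, by Theorem~\ref{thm: maximum_mirror}, the construction lies in an $(a^2-2)$-dimensional subspace, which is $7$-dimensional; this forces $d \geq 7$, while the hypothesis $d < D_4(3) \approx 14.42$ gives $d \leq 14$, yielding $7 \leq d \leq 14$. For $a = 5$ the cardinality is $276$, the ambient subspace has dimension $23$, and $d < D_4(5) \approx 64.56$ gives $d \leq 64$, yielding $23 \leq d \leq 64$. Existence of the constructions throughout these ranges follows from the known existence of the two graphs. The deepest input is the classification of $\text{SRG}(27, 16, 10, 8)$ and $\text{SRG}(275, 162, 105, 81)$, which I would quote rather than reprove; the only point requiring genuine care is verifying that the explicit Gram matrix built from $M$ really realizes the derived code with the correct rank and eigenvalues, but this is exactly the eigenvalue computation already carried out in the proof of Theorem~\ref{thm: maximum_mirror}, so no hidden continuous degree of freedom survives in the reconstruction.
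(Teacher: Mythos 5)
Your proposal is correct and follows essentially the same route as the paper, which simply observes that the derived code must form an $\mathrm{SRG}\bigl(27,16,10,8\bigr)$ (for $a=3$) or $\mathrm{SRG}\bigl(275,162,105,81\bigr)$ (for $a=5$), the complements of the Schl\"afli and McLaughlin graphs, and invokes their uniqueness from~\citet{seidel1968strongly, goethals1975regular}. The only difference is that you spell out the step the paper leaves implicit --- reconstructing the Gram matrix of the spherical projection from the adjacency matrix and passing from combinatorial to geometric uniqueness --- together with the verification of the dimension ranges, all of which is sound.
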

By~\citet{glazyrin2018upper}, Theorem 4, the uniqueness is only known for $7 \leq d \leq 11$ with $\a = 1/3$, and $23 \leq d \leq 59$ with $\a = 1/5$.

\begin{rmk}
\citet{delsarte1977spherical} proved that $N_{1/3}(d) = 28$ for $7 \leq d \leq 15$. There are at least two maximum constructions in $\R^{15}$. Meanwhile,~\citet{cao2022lemmens} proved that $N_{1/5}(d) = 276$ for $23 \leq d \leq 185$. It is known that there are at least two maximum constructions in $\R^{185}$, so it remains unknown that whether the maximum constructions in $\R^d$ for $65 \leq d \leq 184$ are unique. 
\end{rmk}
\section{Further questions}
\label{sec: questions}

We have established the four-point semidefinite bound by using the alternative four-point semidefinite constraints with switching reduction, namely, Theorem~\ref{thm: msdp_reduction} with $m=2$. The five-point and the six-point semidefinite bounds may be able to be established in a similar way; however, the calculation may be quite complicated.
\begin{question}
Establish the five-point and the six-point semidefinite bounds for equiangular lines.
\end{question}

In the proof of the four-point semidefinite bound, we have also noticed some facts, but we fail to give the insights into them.
\begin{question}{\color{white}.}
\begin{enumerate}[label=(\alph*)]
    \item Give a geometric interpretation of the multi-point Gegenbauer polynomials $Q^{d, m}_k(G; \u, \v, t)$, and prove the switching property (Proposition~\ref{prop: Q_switching}) with this interpretation.
    \item Give an explanation for the fact that the four-point semidefinite bound for equiangular lines only depends on the constraints $\SDP^{d, 2}_k[G; X]$ for $k=0$ and $k=3$, but not any other $k$.
    \item Give a characterization of the set of equiangular lines such that the matrix $\SDP^{d, m}_k[G; X]$ vanishes for some certain $m$, $k$ and proper Gram matrix $G$.
\end{enumerate}
\end{question}

For odd integers $a \geq 3$, the existences of constructions of $(a^2-1)(a^2-2)/2$ equiangular lines in $\R^{a^2-2}$ with inner product $\a = 1/a$ are of much importance. By Theorem~\ref{thm: 4sdp_main_mirror}, if such a construction is found for some $a$, then the values of $N_{1/a}(d)$ are answered for a wide range of $d$. However, such constructions are only known for $a=3$ and $a=5$.
\begin{question}\label{q: tight_design}
For which odd integers $a \geq 3$ does the maximum cardinality $N_{1/a}(a^2-2)$ equal $(a^2-1)(a^2-2)/2$? If the cardinality $(a^2-1)(a^2-2)/2$ is not attainable, what is the value of $N_{1/a}(a^2-2)$?
\end{question}
The specific family of equiangular lines is also related to tight spherical $5$-designs and tight spherical $4$-designs introduced by~\citet{delsarte1977spherical}. They proved that a tight spherical $5$-design in $\S^{d-1}$ is a both-end spherical projection of a set of equiangular lines in $\R^d$ with cardinality $d(d+1)/2$ and inner product $\a = 1/\sqrt{d+2}$, and a tight spherical $4$-design in $\S^{d-2}$ is a derived code of a set of equiangular lines in $\R^d$ with the same parameters. By Neumann's Theorem, such constructions of equiangular lines exist only if $d \leq 3$ or $1/\a$ is an odd integer.

Question~\ref{q: tight_design} is equivalent to classify tight spherical $5$-designs and tight spherical $4$-designs. The only known constructions for tight spherical $5$-designs are in $\S^2$, $\S^6$ and $\S^{22}$; the only known constructions for tight spherical $4$-designs are in $\S^1$, $\S^5$ and $\S^{21}$. Furthermore, \citet{bannai2005nonexistence, nebe2013tight} proved that the tight spherical $5$-designs in $\S^{a^2-3}$ as well as tight spherical $4$-designs in $\S^{a^2-4}$ do not exist for $a = 7, 9, 13, 21, 25, 45, 57, 61, 69, 85, 93, \dots$. Our numerical experiments suggest that the multi-point semidefinite programming method for $m \leq 4$ may not be able to show the nonexistence for any other $a$.

On the other hand, for $a=5$ \citet{cao2022lemmens} proved that $N_{1/5}(d) = 276$ for $23 \leq d \leq 185$. Corollary~\ref{cor: uniqueness} indicated that the constructions of $276$ equiangular lines in $\R^d$ for $23 \leq d \leq 64$ are unique. However, the number of different maximum constructions for $d > 64$ is not known well.
\begin{question}
What is the number of different constructions of $276$ equiangular lines in $\R^d$ for $65 \leq d \leq 185$ with $\a = 1/5$?
\end{question}

As indicated in Remark~\ref{rmk: alternative_simpler}, the alternative semidefinite constraints $\SDP^d_k(X)$ is simpler than the original one developed by~\citet{bachoc2008new} when the concerned object $X$ is a spherical $s$-distance set. The alternative constraints may be also useful when concerning objects other than sets of equiangular lines.
\begin{question}
Derive new results of the maximum cardinality of spherical $s$-distance sets or new non-existences of strongly regular graphs using the alternative semidefinite constraints.
\end{question}

\section*{Acknowledgement}
We thank Henry Cohn for initializing the discussion of the four-point semidefinite constraints. We thank Hung-Hsun Hans Yu for his invaluable suggestions in writing. W.-H. Yu was supported by MOST in Taiwan under Grant MOST109-2628-M-008-002-MY4. W.-J. Kao is supported by National Center for Theoretical Sciences (No. 111L104040).

\appendix
\section{Research review on equiangular lines}
\subsection{Previous results on $N(d)$}
\label{sec: review}
Table~\ref{tb: Nd_detail} records the current known lower and upper bounds for $N(d)$ as well as their references.

\begin{table}[h]
    \centering
    \begin{tabular}{c|c|c|c}
        $d$ & $N(d)$ & construction & upper bound \\ \hline
        2 & 3 & & \\
        3-4 & 6 & \citet{haantjes1948equilateral} & \citet{haantjes1948equilateral} \\
        5 & 10 & \citet{van1966equilateral} & \citet{van1966equilateral} \\
        6 & 16 & \citet{van1966equilateral} & \citet{van1966equilateral} \\
        7-13 & 28 & \citet{van1966equilateral} & \citet{lemmens1973equiangular} \\
        14 & 28 & \citet{van1966equilateral} & \citet{greaves2021equiangular} \\
        15 & 36 & \citet{bussemaker1991symmetric} & \citet{lemmens1973equiangular} \\
        16 & 40 & \citet{higman1964finite} & \citet{greaves2021equiangular} \\
        17 & 48 & \citet{lemmens1973equiangular, greaves2016equiangular} & \citet{greaves2021equiangular_1718} \\
        18 & 57-60 & \citet{greaves2021equiangular_1718} & \citet{greaves2018equiangular} \\
        19 & 72-74 & \citet{taylor1971some} & \citet{greaves2021equiangular} \\
        20 & 90-94 & \citet{taylor1971some} & \citet{greaves2021equiangular} \\
        21 & 126 & \citet{taylor1971some} & \citet{lemmens1973equiangular} \\
        22 & 176 & \citet{goethals1970strongly, taylor1971some} & \citet{lemmens1973equiangular} \\
        23 & 276 & \citet{goethals1970strongly, taylor1971some} & \citet{lemmens1973equiangular} \\
        24-41 & 276 & \citet{goethals1970strongly, taylor1971some} & \citet{barg2013new_eq, yu2017new} \\
        42 & 276-288 & \citet{goethals1970strongly, taylor1971some} & \citet{barg2013new_eq, yu2017new} \\
        43 & 344 & \citet{taylor1971some} & \citet{barg2013new_eq, yu2017new}
    \end{tabular}
    \caption{Lower and upper bounds for $N(d)$}
    \label{tb: Nd_detail}
\end{table}

Below is a brief review for the bounds for $N(d)$ in small dimensions:
\begin{itemize}
\item \citet{haantjes1948equilateral} proved that $N(3) = N(4) = 6$.
\item \citet{van1966equilateral} determined $N(5)$ and $N(6)$ and constructed $28$ equiangular lines in $\R^7$. They also proved that $N_{1/a}(d) \leq d(a^2-1)/(a^2-d)$ for $d < a^2$, which is known as the {\bf relative bound}.
\item Gerzon~\cite{lemmens1973equiangular} proved that $N(d) \leq d(d+1)/2$. This bound is known as the {\bf absolute bound}. The equality occurs only if $d = 1, 2, 3$, or $a^2-2$ for some odd integer $a$. Neumann~\cite{lemmens1973equiangular} proved that a set $X$ of equiangular lines in $\R^d$ with $|X| > 2d$ must have inner product $\a$ being reciprocal of an odd integer.
\item \citet{lemmens1973equiangular} collected the maximum known constructions of equiangular lines in dimensions $15$-$17$, $19$-$23$ and $43$. Most of the constructions can be found in~\citeauthor{taylor1971some}'s thesis~\cite{taylor1971some}. An explicit construction of $48$ equiangular lines in $\R^{17}$ can be found in~\citet{greaves2016equiangular}. \citeauthor{lemmens1973equiangular} also determined $N(d)$ for $d = 7$-$13$, $15$ and $21$-$23$ by Neumann's theorem, the relative bound and some analysis on pillars.
\item \citet{barg2013new_eq} showed numerically that $N_{1/5}(d) = 276$ for $23 \leq d \leq 60$ by the {\bf semidefinite programming method}. The results can determine that $N(d) = 276$ for $24 \leq d \leq 41$, $N(42) \leq 288$ and $N(43) = 344$. A rigorous proof of the numerical results is given by~\citet{yu2017new}. The theorem is Theorem~\ref{thm: 3sdp_main_mirror} in the present article.
\item The maximum known construction in $\R^{18}$ was the same as $\R^{17}$ until~\citet{szollHosi2019remark} found a construction of equiangular lines with cardinality $54$. Later,~\citet{lin2020saturated} and~\citet{greaves2021equiangular_1718} found constructions with cardinalities $56$ and $57$, respectively.
\item For $d = 14, 16$-$20$, all the best possible constructions have inner products $\a = 1/5$ (\citet{lemmens1973equiangular}). The following are the improvements of the upper bounds:
\begin{enumerate}[label=\arabic*.]
\item $N(14)$: $30$ (relative bound) $\longrightarrow 29$ (\citet{greaves2016equiangular}) $\longrightarrow 28$ (\citet{greaves2021equiangular}).
\item $N(16)$: $42$ (relative bound) $\longrightarrow 41$ (\citet{greaves2016equiangular}) $\longrightarrow 40$ (\citet{greaves2021equiangular}).
\item $N(17)$: $51$ (relative bound) $\longrightarrow 50$ (\citet{sustik2007existence}) $\longrightarrow 49$ (\citet{greaves2019equiangular}) $\longrightarrow 48$ (\citet{greaves2021equiangular_1718}).
\item $N(18)$: $61$ (relative bound) $\longrightarrow 60$ (\citet{greaves2018equiangular}).
\item $N(19)$: $76$ (relative bound) $\longrightarrow 75$ (\citet{azarija2018there}) $\longrightarrow 74$ (\citet{greaves2021equiangular}).
\item $N(20)$: $96$ (relative bound) $\longrightarrow 95$ (\citet{azarija2020there}) $\longrightarrow 94$ (\citet{greaves2021equiangular}).
\end{enumerate}
\end{itemize}

For the asymptotic lower bound, \citet{taylor1971some} proved that $N(q^2-q+1) \geq q^3+1$ for odd prime powers $q$. This construction indicates that $N(d) = \Omega(d^{3/2})$. \citet{de2000large} proved that $N(d) \geq \frac{2}{9}(d+1)^2$ for $d = 6 \cdot 4^i - 1$, which implies that
\[
    \frac{2}{9} \leq \mathop{\lim\sup}_{d \to \infty}\frac{N(d)}{d^2} \leq \frac{1}{2}.
\]
\citet{greaves2016equiangular} offered another construction, indicating that
\[
    N(d) \geq \frac{32d^2+328d+296}{1089}.
\]

\subsection{Previous results on $N_{\a}(d)$}
\label{sec: review2}
Neumann's theorem starts the study on $N_{1/a}(d)$ for odd integers $a$. Below is a brief review of the researches on $N_{\a}(d)$.
\begin{itemize}

\item For $\a = 1/3$, \citet{lemmens1973equiangular} proved that $N_{1/3}(d) = 28$ for $7 \leq d \leq 15$, and $N_{1/3}(d) = 2(d-1)$ for $d \geq 15$.
\item For $\a = 1/5$, \citet{lemmens1973equiangular} conjectured that $N_{1/5}(d)$ equals $276$ for $23 \leq d \leq 185$ and $\textstyle\lfloor \frac{3}{2}(d-1) \rfloor$ for $d \geq 185$.
\begin{itemize}
    \item \citet{neumaier1989graph} proved the conjecture for sufficiently large $d$.
    \item \citet{yu2017new} proved the conjecture for $23 \leq d \leq 60$.
    \item \citet{lin2020equiangular} proved that the conjecture is true when the base size $K = 2, 3, 5$, as \citeauthor{lemmens1973equiangular} claimed.
    \item Recently, \citet{cao2022lemmens} completely proved the conjecture.
\end{itemize}

\item For the asymptotic behavior of $N_{\a}(d)$ as $d \to \infty$,
\begin{itemize}
    \item \citet{bukh2016bounds} proved that $N_{\a}(d) = O(d)$ for any $\a \in (0, 1)$.
    \item \citet{balla2018equiangular} proved that $N_{\a}(d) \leq 1.93d$ if $\a \neq 1/3$.
    \item \citet{jiang2020forbidden} proved that $N_{1/5}(d) = (3/2)d + O(1)$, $N_{1/(1+2\sqrt{2})}(d) = (3/2)d + O(1)$, and $N_{\a}(d) \leq 1.49d$ for every $\a \neq 1/3, 1/5$ and $1/(1+2\sqrt{2})$ and for sufficiently large $d$.
    \item \citet{jiang2021equiangular} proved that $N_{\a}(d) = \lfloor\frac{k}{k-1}(d-1)\rfloor$ for all sufficiently large $d$, where $k$ is the spectral radius order of $(1-\a)/(2\a)$. In particular, $N_{1/a}(d) = \lfloor\frac{a+1}{a-1}(d-1)\rfloor$ for all odd integers $a \geq 3$ and all sufficiently large $d$. If $k = \infty$, then $N_{\a}(d) = d + o(d)$.
\end{itemize}

\item Some new relative bounds.
\begin{itemize}
    \item \citet{okuda2016new} proved that
    \[
        N_{1/a}(d) \leq 2 + (d-2) \max\left(\frac{(a-1)^3}{d-(3a^2-6a+2)}, \frac{(a+1)^3}{(3a^2+6a+2)-d}\right)
    \]
    for $3a^2 - 6a + 2 < d < 3a^2 + 6a + 2$.
    \item \citet{glazyrin2018upper} proved that
    \[
        N_{1/a}(d) \leq \left(\frac{2}{3}a^2 + \frac{4}{7}\right)d + 2
    \]
    for $a \geq 3$.
    \item \citet{balla2021equiangular} proved that
    \[
        N_{1/a}(d) \leq \frac{a^3}{2}\sqrt{d} + \frac{a+1}{2}d
    \]
    and
    \[
        N_{1/a}(d) \leq \max\left(2a^5+\frac{2a^4}{a-1}, (2+\frac{8}{(a-1)^2})(d+1)\right).
    \]
\end{itemize}

\end{itemize}

\bibliographystyle{plainnat}
\bibliography{ref}

\end{document}